\documentclass[11pt,a4paper,reqno]{amsart}

\usepackage{epsfig}
\usepackage{amsfonts}
\usepackage{amsmath}
\usepackage{amssymb}
\usepackage{amsthm}
\usepackage{color}
\usepackage{enumerate}
\usepackage[T1]{fontenc}
\usepackage[latin1]{inputenc}
\usepackage{ae,aecompl}
\usepackage{graphicx}
\usepackage{nicefrac}
\usepackage[includehead,includefoot,margin=25mm]{geometry}
\usepackage{bigstrut}
\usepackage{setspace}

\linespread{1.0}

\newtheorem{theorem}{Theorem}[section]
\newtheorem{corollary}[theorem]{Corollary}
\newtheorem{lemma}[theorem]{Lemma}
\newtheorem{proposition}[theorem]{Proposition}

{\theoremstyle{remark}
  \newtheorem{remark}[theorem]{Remark}}
{\theoremstyle{definition}
  \newtheorem{definition}[theorem]{Definition}

  \newtheorem{example}[theorem]{Example}

}

\newcommand{\PP}[0]{\ensuremath{\mathbb{P}}}

\newcommand{\ZZ}[0]{\ensuremath{\mathbb{Z}}}
\newcommand{\GA}[0]{\ensuremath{\mathbb{G}_{\mathrm{a}}}}
\newcommand{\GM}[0]{\ensuremath{\mathbb{G}_{\mathrm{m}}}}
\newcommand{\AF}[0]{\ensuremath{\mathbb{A}}}

\newcommand{\RR}[0]{\ensuremath{\mathcal{R}}}
\newcommand{\QQ}[0]{\ensuremath{\mathbb{Q}}}
\newcommand{\TT}[0]{\ensuremath{\mathbb{T}}}
\newcommand{\KK}[0]{\ensuremath{\mathbf{k}}}
\newcommand{\OO}[0]{\ensuremath{\mathcal{O}}}

\newcommand{\DD}[0]{\ensuremath{\mathfrak{D}}}
\newcommand{\tDD}[0]{\ensuremath{\widetilde{\mathfrak{D}}}}

\newcommand{\tN}[0]{{\ensuremath{\widetilde{N}}}}
\newcommand{\tM}[0]{{\ensuremath{\widetilde{M}}}}
\newcommand{\te}[0]{{\ensuremath{\widetilde{e}}}}
\newcommand{\trho}[0]{{\ensuremath{\widetilde{\rho}}}}
\newcommand{\tsigma}[0]{{\ensuremath{\widetilde{\sigma}}}}
\newcommand{\tomega}[0]{{\ensuremath{\widetilde{\omega}}}}
\newcommand{\tm}[0]{{\ensuremath{\widetilde{m}}}}
\newcommand{\fX}[0]{{\ensuremath{\mathfrak{X}}}}

\newcommand{\fract}[0]{\ensuremath{\operatorname{Frac}}}

\newcommand{\spec}[0]{\ensuremath{\operatorname{Spec}}}

\newcommand{\supp}[0]{\ensuremath{\operatorname{Supp}}}

\newcommand{\Aut}[0]{\ensuremath{\operatorname{Aut}}}

\newcommand{\divi}[0]{\ensuremath{\operatorname{div}}}

\newcommand{\pol}[0]{\ensuremath{\operatorname{Pol}}}

\newcommand{\rank}[0]{\ensuremath{\operatorname{rank}}}
\newcommand{\cone}[0]{\ensuremath{\operatorname{cone}}}
\newcommand{\homo}[0]{\ensuremath{\operatorname{Hom}}}
\newcommand{\relint}[0]{\ensuremath{\operatorname{rel.int}}}
\newcommand{\conv}[0]{\ensuremath{\operatorname{Conv}}}
\newcommand{\trdeg}[0]{\ensuremath{\operatorname{tr.deg}}}

\newcommand{\SL}[0]{\ensuremath{\operatorname{SL}_2}}
\newcommand{\slt}[0]{\ensuremath{\mathfrak{sl}_2}}
\newcommand{\PSL}[0]{\ensuremath{\operatorname{PSL}_2}}
\newcommand{\PGL}[0]{\ensuremath{\operatorname{PGL}}}
\newcommand{\Der}[0]{\ensuremath{\operatorname{Der}}}
\newcommand{\mfs}[0]{\ensuremath{\mathfrak{s}}}

\begin{document}


\title[P-divisors and $\SL$-actions on $\TT$-varieties]{Polyhedral
  divisors and $\mathbf{SL_{\mathbf{2}}}$-actions \\ on affine $\TT$-varieties}

\author{Ivan Arzhantsev and Alvaro Liendo}

\address{Department of Higher Algebra, Faculty of Mechanics and
  Mathematics, Lomonosov Moscow State University, Leninskie Gory 1,
  Moscow 119991, Russia} %
\email{arjantse@mccme.ru}

\address{Mathematisches Institut, Universit\"at Basel, Rheinsprung 21,
  CH-4051 Basel, Switzerland}%
\email{alvaro.liendo@gmail.com}

\date{\today}

\thanks{\mbox{\hspace{11pt}}{\it 2010 Mathematics Subject
    Classification}:
  13N15, 14L30, 14M25, 14R20, 14M17.\\
  \mbox{\hspace{11pt}}{\it Key words}: $\SL$-actions, torus actions,
  affine varieties, locally nilpotent derivations, special actions,
  quasi-homogeneous $\SL$-threefolds. \\
  \mbox{\hspace{11pt}} The first author was partially supported by RFBR grant 09-01-00648-a
  and the Simons Foundation.}

\begin{abstract}
  In this paper we classify $\SL$-actions on normal affine
  $\TT$-varieties that are normalized by the torus $\TT$. This is done
  in terms of a combinatorial description of $\TT$-varieties given by
  Altmann and Hausen. The main ingredient is a generalization of
  Demazure's roots of the fan of a toric variety. As an application we
  give a description of special $\SL$-actions on normal affine
  varieties. We also obtain, in our terms, the classification of
  quasihomogeneous $\SL$-threefolds due to Popov.
\end{abstract}

\maketitle

\section*{Introduction}

Let $\KK$ be an algebraically closed field of characteristic zero, $M$ be
a lattice of rank $n$, $N=\homo(M,\ZZ)$ be the dual lattice of $M$,
and $\TT$ be the algebraic torus $\spec\KK[M]$, so that $M$ is the
character lattice of $\TT$ and $N$ is the one-parameter subgroup
lattice of $\TT$.

A \emph{$\TT$-variety} $X$ is a normal algebraic variety endowed with
an effective regular action of $\TT$. The \emph{complexity} of a
$\TT$-action is the codimension of a general orbit, and since the
$\TT$-action on $X$ is effective, the complexity of $X$ equals $\dim
X-\rank M$. For an affine variety $X$, to introduce a $\TT$-action on
$X$ is the same as to endow $\KK[X]$ with an $M$-grading. There are
well known combinatorial descriptions of $\TT$-varieties. We send the
reader to \cite{Dem70} and \cite{Fu93} for the case of toric
varieties, to \cite[Ch. 2 and 4]{KKMS73} and \cite{Tim08} for the
complexity one case, and to \cite{AlHa06,AHS08} for the general
case. In this paper we use the approach in \cite{AlHa06}.

Any affine toric variety is completely determined by a polyhedral cone
$\sigma\subseteq N_{\QQ}$. Similarly, the description of a normal
affine $\TT$-varieties $X$ due to Altmann and Hausen \cite{AlHa06}
involves the data $(Y,\sigma,\DD)$ where $Y$ is a normal
semiprojective variety, $\sigma\subseteq N_{\QQ}:=N\otimes\QQ$ is a
polyhedral cone, and $\DD$ is a divisor on $Y$ whose coefficients are
polyhedra in $N_{\QQ}$ with tail cone $\sigma$. The divisor $\DD$ is
called a \emph{$\sigma$-polyhedral divisor} on $Y$ (see
Section~\ref{comb-des} for details).

Let $X$ be a $\TT$-variety endowed with a regular $G$-action, where
$G$ is any linear algebraic group. We say that the $G$-action on $X$
is \emph{compatible} if the image of $G$ in $\Aut(X)$ is normalized
but not centralized by $\TT$. Furthermore, we say that the $G$-action
is of \emph{fiber type} if the general orbits are contained in the
$\TT$-orbit closures, and of \emph{horizontal type} otherwise
\cite{FlZa05b,Lie10}.

Let now $\GA=\GA(\KK)$ be the additive group of $\KK$. It is well
known that a $\GA$-action on an affine variety $X$ is equivalent to a
locally nilpotent derivation (LND) of $\KK[X]$. A description of
compatible $\GA$-actions on an affine $\TT$-variety, or equivalently
of homogeneous LNDs on $\KK[X]$, is available in the case where $X$ is
of complexity at most one \cite{Lie10} or the $\GA$-action is of fiber
type \cite{Lie10b} in terms of a generalization of Demazure's roots of
a fan \cite{Dem70} (see Sections~\ref{LND-toric} and \ref{sec:LND-T}).

A regular $\SL$-action on an affine variety $X$ is uniquely defined by
an $\slt$-triple $\{\delta,\partial_+,\partial_-\}$ of derivations of
the algebra $\KK[X]$, where $\partial_\pm$ are locally nilpotent,
$\delta=[\partial_+,\partial_-]$ is semisimple and
$[\delta,\partial_\pm]=\pm2\partial_\pm$ (see
Proposition~\ref{sl-actions}). Assume now that $X$ is an affine
$\TT$-variety. If the $\SL$-action is compatible, then $\partial_\pm$
are homogeneous with respect to the $M$-grading on $\KK[X]$ and the
grading given by $\delta$ is a downgrading of the $M$-grading.

The main result of this paper, contained in Section~\ref{sec:main}, is
a classification of compatible $\SL$-actions on an affine
$\TT$-variety $X$ in the case where this action is of fiber type or
$X$ is of complexity one (See Theorems~\ref{sec:sl-fiber} and
\ref{th:hor}, respectively).  Our idea is to classify compatible
$\SL$-actions by calculating the commutator of two homogeneous
LNDs. The existence of a compatible $\SL$-action on $X$ puts strong
restrictions on the combinatorial data $(Y,\sigma,\DD)$ and endows
$\DD$ with an additional structure. It should be noted that if the
$\TT$-variety $X$ is of complexity one and the $\SL$-action is of
horizontal type, then $X$ is spherical with respect to a bigger
reductive group, namely, an extension of $\SL$ by a torus. We do not
use the theory of spherical varieties in this paper.

The rest of the paper is devoted to two applications of our main
result: special $\SL$-actions and $\SL$-actions with an open orbit. A
$G$-action on $X$ is called special (or horospherical) if there exists
a dense open $W\subseteq X$ such that the isotropy group of any point
$x\in W$ contains a maximal unipotent subgroup of $G$. Special actions
play an important role in Invariant Theory.

Any special action of a connected reductive group $G$ on an affine
variety $X$ may be reconstructed from the action of a maximal torus
$T\subseteq G$ on the algebra $\KK[X]^U$ of invariants of a maximal
unipotent subgroup $U$ \cite[Theorem~5]{Pop86}. This reduces the study
of special actions to torus actions. In Section~\ref{sec:special} we
illustrate this phenomenon for $\SL$-actions in our terms (see
Theorem~\ref{th:special} and \ref{rk:special-easy}). In particular, we
show that for every special $\SL$-action on an affine variety $X$
there is a canonical 2-torus action and the $\SL$-action is compatible
and of fiber type with respect to this torus. Since the reconstruction
of the $G$-variety $X$ from the $T$-variety $\spec\KK[X]^U$ is an
algebraic procedure, it is useful to have a geometric description of
$X$. In Proposition~\ref{lm:T2-ex} we describe a normal affine variety
$X$ with a special $\SL$-action as a $\TT^2$-variety with respect to
the canonical torus $\TT^2$. It is worthwhile to remark that any
$G$-action on an affine variety may be contracted to a special one
\cite[Proposition~8]{Pop86}. It will be interesting to interpret
contraction of $\SL$-actions in terms of polyhedral divisors.

As a corollary of our classification of special actions, we prove that
if an affine $\TT$-variety $X$ of complexity one admits a compatible
special $\SL$-action of horizontal type, then $X$ is toric with
respect to a bigger torus and the $\SL$-action is compatible with
respect to the big torus as well. Furthermore, using a linearization
result due to Berchtold and Hausen \cite{BeHa03}, we show that, up to
conjugation in $\Aut(X)$, any special $\SL$-action on an affine toric
3-fold $X$ is compatible with the big torus, and thus it is given by
an $\SL$-root (see Definition~\ref{def:sl-root}).

It is natural to generalize Altmann and Hausen's approach
\cite{AlHa06} to arbitrary reductive groups. Special actions form the
most accessible class for such a generalization. Our work in this line
may be regarded as a first step towards this aim. It must be noted
that Timashev \cite{Tim97} already gave a combinatorial description
for $G$-actions of complexity one in the framework of Luna-Vust
theory.

Finally, our method allows to reprove, in Section~\ref{sec:popov},
Popov's classification of generically transitive $\SL$-actions on
normal affine threefolds. The only fact that we use is the existence
of a one dimensional torus $R$ commuting with $\SL$. Together with the
maximal torus in $\SL$, this allows us to consider a quasi-homogeneous
threefold as a $\TT^2$-variety of complexity one, where $\TT^2$ is a
two dimensional torus. We also obtain, as a direct consequence of our
results, the characterization of toric quasi-homogeneous
$\SL$-threefolds given in \cite{Gai08} and \cite{BaHa08} (see
Corollaries~\ref{cor:gaif} and \ref{cor:gaif-2}). Recall that a
$G$-variety is quasi-homogeneous if it has an open $G$-orbit.

In the entire paper the term variety means a normal integral scheme of
finite type over an algebraically closed field $\KK$ of characteristic
zero. The term point always refer to a closed point.

\smallskip 

The authors are grateful to Mikhail Zaidenberg for useful discussions
and the referee for valuable suggestions. This work was done during
stays of both authors at the Institut Fourier, Grenoble. We thank the
Institut Fourier for its support and hospitality.

\tableofcontents

\section{Preliminaries}
\label{pre}

In this section we recall the results about $\GA$-actions on
affine $\TT$-varieties needed in this paper.

\subsection{Combinatorial description of $\TT$-varieties}
\label{comb-des}

Let $M$ be a lattice of rank $n$ and $N=\homo(M,\ZZ)$ be its dual
lattice. We let $M_{\QQ}=M\otimes\QQ$, $N_{\QQ}=N\otimes\QQ$, and we
consider the natural duality pairing $M_{\QQ}\times N_{\QQ}\rightarrow
\QQ$, $(m,p)\mapsto \langle m,p\rangle=p(m)$.

Let $\TT=\spec\KK[M]$ be the $n$-dimensional algebraic torus
associated to $M$ and let $X=\spec\,A$ be an affine $\TT$-variety. The
comorphism $A\rightarrow A\otimes \KK[M]$ induces an $M$-grading on
$A$ and, conversely, every $M$-grading on $A$ arises in this way. The
$\TT$-action on $X$ is effective if and only if the corresponding
$M$-grading is effective.

In \cite{AlHa06}, a combinatorial description of normal affine
$\TT$-varieties is given. In what follows we recall the main features
of this description. Let $\sigma$ be a pointed polyhedral cone in
$N_{\QQ}$. We define $\pol_{\sigma}(N_{\QQ})$ to be the set of all
\emph{$\sigma$-polyhedra} i.e., the set of all polyhedra in $N_{\QQ}$
that can be decomposed as the Minkowski sum of a bounded polyhedron
and the cone $\sigma$.

Recall that $\sigma^\vee$ stands for the cone in $M_\QQ$ dual to
$\sigma$. To a $\sigma$-polyhedron $\Delta\in\pol_{\sigma}(N_{\QQ})$
we associate its support function $h_{\Delta}:\sigma^\vee\rightarrow
\QQ$ defined by
$$h_{\Delta}(m)=\min\langle m,\Delta\rangle=\min_{p\in \Delta}\langle
m,p\rangle\,.$$ %
Furthermore, if we let $\{v_i\}$ be the set of all vertices of
$\Delta$, then the support function is given by
\begin{align} \label{support-function}
h_{\Delta}(m)=\min_i\{v_i(m)\}\quad\mbox{for all}\quad
m\in\sigma^\vee\,.
\end{align}
Hence, $h_\Delta$ is piecewise linear, concave, and positively
homogeneous.

\begin{definition} \label{ppd} A normal variety $Y$ is called
  \emph{semiprojective} if it is projective over an affine variety. A
  \emph{$\sigma$-polyhedral divisor} on $Y$ is a formal sum
  $\DD=\sum_{Z}\Delta_Z\cdot Z$, where $Z$ runs over all prime
  divisors on $Y$, $\Delta_Z\in\pol_{\sigma}(N_{\QQ})$, and
  $\Delta_Z=\sigma$ for all but finitely many $Z$. For
  $m\in\sigma^{\vee}$ we can evaluate $\DD$ in $m$ by letting $\DD(m)$
  be the $\QQ$-divisor
  $$\DD(m)=\sum_{Z\subseteq Y} h_Z(m)\cdot Z\,,$$
  where $h_Z$ is the support function of $\Delta_Z$. A
  $\sigma$-polyhedral divisor $\DD$ is called \emph{proper} if the
  following hold:
\begin{enumerate}[(i)]
\item $\DD(m)$ is semiample and $\QQ$-Cartier for all
  $m\in\sigma^\vee$, and
\item $\DD(m)$ is big for all $m\in\relint(\sigma^\vee)$.
\end{enumerate}
\end{definition}

Here $\relint(\sigma^\vee)$ denotes the relative interior of the cone
$\sigma^\vee$. Furthermore, a $\QQ$-Cartier divisor $D$ on $Y$ is
called \emph{semiample}\index{semiample divisor} if there exists $r>0$
such that the linear system $|rD|$ is base point free, and
\emph{big}\index{big divisor} if there exists a divisor $D_0\in |rD|$,
for some $r>0$, such that the complement $Y\setminus \supp D_0$ is
affine.

The following theorem gives a combinatorial description of
$\TT$-varieties analogous to the classical combinatorial description
of toric varieties. In the sequel, $\chi^m$ denotes the character of
$\TT$ corresponding to the lattice vector $m$, and $\sigma^\vee_M$
denotes the semigroup $\sigma^\vee\cap M$. Furthermore, for a
$\QQ$-divisor $D$ on $Y$, $\OO_Y(D)$ stands for the sheaf
$\OO_Y(\lfloor D\rfloor)$.

\begin{theorem}[\cite{AlHa06}] \label{AH} To any proper
  $\sigma$-polyhedral divisor $\DD$ on a semiprojective variety $Y$
  one can associate a normal affine $\TT$-variety of dimension $\rank
  M+\dim Y$ given by $X[Y,\DD]=\spec A[Y,\DD]$, where
  $$A[Y,\DD]=\bigoplus_{m\in\sigma^{\vee}_M} A_m\chi^m,\quad
  \mbox{and}\quad A_m=H^0(Y,\OO_Y(\DD(m))\subseteq \KK(Y)\,.$$

  Conversely, any normal affine $\TT$-variety is isomorphic to
  $X[Y,\DD]$ for some semiprojective variety $Y$ and some proper
  $\sigma$-polyhedral divisor $\DD$ on $Y$.
\end{theorem}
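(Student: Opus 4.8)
The plan is to treat the two directions separately. For the construction direction, I would first check that $A=A[Y,\DD]$ is a well-defined $M$-graded $\KK$-algebra sitting inside $\KK(Y)[M]$. The crucial point is that each support function $h_Z$ is, by \eqref{support-function}, a minimum of linear forms, hence concave and positively homogeneous of degree one, and therefore \emph{superadditive}: $h_Z(m)+h_Z(m')\le h_Z(m+m')$. This gives $\DD(m)+\DD(m')\le\DD(m+m')$ as $\QQ$-divisors, so that multiplication in $\KK(Y)$ induces inclusions $A_m\cdot A_{m'}\subseteq A_{m+m'}$ via $\OO_Y(\DD(m))\otimes\OO_Y(\DD(m'))\to\OO_Y(\DD(m+m'))$. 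Since each $A_m\subseteq\KK(Y)$, the algebra $A$ is a domain, and it is normal because membership of a homogeneous element $f\chi^m$ is cut out by the valuative conditions $\ord_Z(f)\ge -h_Z(m)$ along prime divisors $Z$, and such conditions are preserved under integral dependence.

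For affineness I would invoke the properness hypotheses. Since $\DD(0)=0$, one has $A_0=H^0(Y,\OO_Y)$, which is a finitely generated normal domain because $Y$ is semiprojective, i.e.\ projective over $\spec A_0$. Semiampleness of the divisors $\DD(m)$ then yields finite generation of $\bigoplus_m A_m$ as an $A_0$-algebra, by the standard fact that a multigraded section ring built from finitely many semiample $\QQ$-Cartier divisors spanning $\sigma^\vee$ is finitely generated. For the dimension count, the generic fibre of $X[Y,\DD]\to Y$ is a torsor under $\TT$, whence $\trdeg\fract(A)=\dim Y+\rank M$; effectiveness of the $\TT$-action is equivalent to the weights spanning $M$.

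For the converse, start from a normal affine $X=\spec A$ with effective $M$-grading $A=\bigoplus_{m\in\omega_M}A_m$, where $\omega=\sigma^\vee$ is the weight cone, which (after a standard reduction) may be assumed pointed. Put $\KK(Y):=\fract(A)_0$, the degree-zero part of the fraction field; this will be the function field of the base, with tail cone $\sigma=\omega^\vee$. Trivializing each graded piece by a choice of nonzero homogeneous fraction, I would realize every $A_m$ as an $A_0$-submodule of $\KK(Y)$, that is, as the sections $H^0(Y,\OO_Y(D_m))$ of a divisorial sheaf on a suitable model $Y$. Superadditivity of the family $m\mapsto D_m$, together with its positive homogeneity along rays, is exactly the data encoded by a $\sigma$-polyhedral divisor $\DD$ with $D_m=\DD(m)$, and one concludes by comparing graded pieces that $X[Y,\DD]\cong X$.

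The technical heart, and the step I expect to be the main obstacle, is the construction of the base $Y$: one must produce a single normal semiprojective model on which \emph{all} the divisors $\DD(m)$ simultaneously become semiample $\QQ$-Cartier. I would obtain $Y$ as a good quotient, realized as a $\proj$ of a Rees-type algebra associated to $A$ and the $M$-grading, using finite generation of $A$ to guarantee that finitely many charts suffice and that $Y$ is projective over $\spec A_0$. Semiampleness of $\DD(m)$ then holds by construction, while bigness of $\DD(m)$ for $m\in\relint(\sigma^\vee)$ records that the morphism $X\to Y$ is generically a torus bundle of full rank, equivalently that the generic $\TT$-orbit is dense in its fibre. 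Verifying these positivity properties and the independence of the final data from the auxiliary choices is where the birational-geometric work concentrates.
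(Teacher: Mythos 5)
The paper offers no proof of Theorem~\ref{AH}: it is quoted verbatim from \cite{AlHa06}, so the only meaningful comparison is with Altmann and Hausen's original argument. Your outline follows that argument's architecture faithfully. Superadditivity of the support functions $h_Z$ does give $A_m\cdot A_{m'}\subseteq A_{m+m'}$; finite generation does come from semiampleness; and the converse does proceed by choosing a multiplicative system of homogeneous trivializations realizing each $A_m$ inside $\KK(Y)=\fract(A)_0$. Two local compressions deserve correction. For normality, the condition $\ord_Z(f)\ge -h_Z(m)$ with $h_Z$ merely superadditive is a quasi-valuation, not a valuation; to run the integral-closure argument one splits $h_Z(m)=\min_i v_{i,Z}(m)$ over the vertices, so that each $f\chi^m\mapsto \ord_Z(f)+v_{i,Z}(m)$ is a genuine monomial valuation, and one must add the ray conditions $\langle m,\rho\rangle\ge 0$, $\rho\in\sigma(1)$, to cut out $m\in\sigma^\vee$. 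Also, the weight cone $\omega$ need not (and in general cannot) be made pointed: what effectiveness buys is that $\omega$ is full-dimensional, hence $\sigma=\omega^\vee$ is pointed, with no reduction required.

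The genuine gap is the one you flag yourself, and it is larger than your sketch suggests. A single ``Proj of a Rees-type algebra'' does not produce $Y$: the quotients $\proj\bigoplus_{k\ge 0} A_{km}$ genuinely vary with $m$, being constant only on the chambers of a GIT fan in $\sigma^\vee$, and the Proj attached to one weight dominates only that chamber's quotient. In \cite{AlHa06}, $Y$ is obtained as the normalization of a distinguished component of the inverse limit of these finitely many chamber quotients, and it is precisely the piecewise-linear, concave chamber structure of $m\mapsto D_m$ that makes the coefficients polyhedra with tail cone $\sigma$, i.e., assembles a bona fide $\sigma$-polyhedral divisor with $D_m=\DD(m)$; verifying $\QQ$-Cartierness and semiampleness of the resulting divisors, and that bigness on $\relint(\sigma^\vee)$ holds and characterizes the minimal base, occupies the bulk of the original paper. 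Without this step your converse is an accurate table of contents rather than a proof, so the proposal should be graded as the correct strategy (the same one as \cite{AlHa06}) with its central construction asserted rather than carried out.
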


We call $Y$ the \emph{base variety} and the pair $(Y,\DD)$ the
\emph{combinatorial description} of $X$. We also define the
\emph{support} of a proper $\sigma$-polyhedral divisor as $\supp
\DD=\{Z\subseteq Y\mid \Delta_Z\neq \sigma\}$.

This combinatorial description is not unique, but
can be made unique by adding some minimality conditions on the pair
$(Y,\DD)$, see \cite[Section 8]{AlHa06}. Here we only need a
particular case of \cite[Corollary 8.12]{AlHa06}.

\begin{corollary} \label{cor:AH} %
  Let $\DD$ and $\DD'$ be two proper $\sigma$-polyhedral divisors on a
  normal semiprojective variety $Y$.  If for every prime divisor $Z$
  in $Y$ there exists a vector $v_Z\in N$ such that
  $$\DD=\DD'+\sum_Z(v_Z+\sigma)\cdot Z,\quad\mbox{and}\quad
  \sum_Z\langle m,v_Z\rangle\cdot Z \mbox{ is principal,}\quad \forall
  m\in\sigma^\vee_M\,,$$ then $X[Y,\DD]$ is equivariantly isomorphic
  to $X[Y,\DD']$.
\end{corollary}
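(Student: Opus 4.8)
The plan is to produce an explicit $M$-graded isomorphism of coordinate algebras $\Phi\colon A[Y,\DD]\to A[Y,\DD']$, since by Theorem~\ref{AH} a grading-preserving algebra isomorphism corresponds precisely to a $\TT$-equivariant isomorphism of the affine varieties. First I would translate the hypothesis into a statement about the evaluations. The identity $\DD=\DD'+\sum_Z(v_Z+\sigma)\cdot Z$ means that coefficientwise $\Delta_Z=\Delta'_Z+v_Z$, the Minkowski summand $\sigma$ being absorbed by the tail cone of $\Delta'_Z$. Using the description \eqref{support-function} of support functions, $h_{\Delta_Z}(m)=h_{\Delta'_Z}(m)+\langle m,v_Z\rangle$ for all $m\in\sigma^\vee$, so that
$$\DD(m)=\DD'(m)+E_m,\qquad E_m:=\sum_Z\langle m,v_Z\rangle\cdot Z\,.$$
Because $m\in M$ and $v_Z\in N$, each $E_m$ is an \emph{integral} divisor, and by hypothesis it is principal for every $m\in\sigma^\vee_M$; write $E_m=\divi(f_m)$ with $f_m\in\KK(Y)^*$.

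Next I would build the isomorphism on graded pieces. Since $E_m$ is integral, $\lfloor\DD(m)\rfloor=\lfloor\DD'(m)\rfloor+E_m$, and hence for $g\in\KK(Y)$ one has $\divi(g)+\lfloor\DD(m)\rfloor\ge 0$ if and only if $\divi(f_mg)+\lfloor\DD'(m)\rfloor\ge 0$. Thus multiplication by $f_m$ is a linear isomorphism $A_m\xrightarrow{\ \sim\ }A'_m$, and the grading-preserving bijection $\Phi(g\chi^m):=(f_mg)\chi^m$ is a candidate for the desired map. Computing the product inside the ambient algebra $\KK(Y)\otimes\KK[M]$, where $(g\chi^m)(g'\chi^{m'})=gg'\chi^{m+m'}$, one sees that $\Phi$ is multiplicative if and only if $f_{m+m'}=f_mf_{m'}$ for all $m,m'$. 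So the whole argument reduces to choosing the $f_m$ \emph{consistently}, i.e. as a semigroup homomorphism into $\KK(Y)^*$ lifting $m\mapsto E_m$.

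I expect this consistency to be the only genuine obstacle, and I would resolve it as follows. The map $m\mapsto E_m$ is $\ZZ$-linear, and since $\sigma$ is pointed the cone $\sigma^\vee$ is full-dimensional, so differences of elements of $\sigma^\vee_M$ generate $M$ as a group; as the principal divisors form a subgroup of $\Div(Y)$, it follows that $E_m$ is principal for \emph{all} $m\in M$, giving a homomorphism $E\colon M\to\Div(Y)$ with principal image. Because $M$ is a free abelian group, $E$ lifts along the surjection $\divi\colon\KK(Y)^*\twoheadrightarrow\{\text{principal divisors}\}$ to a group homomorphism $m\mapsto f_m$ with $\divi(f_m)=E_m$: concretely, fix a $\ZZ$-basis of $M$, choose an arbitrary lift of each basis vector, and extend multiplicatively. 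With such a multiplicative family $\{f_m\}$ the map $\Phi(g\chi^m)=(f_mg)\chi^m$ is a well-defined $M$-graded algebra homomorphism, with graded inverse $g\chi^m\mapsto(f_m^{-1}g)\chi^m$, hence an isomorphism. Passing to $\spec$ yields the required $\TT$-equivariant isomorphism $X[Y,\DD]\cong X[Y,\DD']$.

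Everything but the lifting step is a routine bookkeeping of floors and support functions; the point worth stating carefully is that without multiplicativity of $m\mapsto f_m$ one would only obtain an isomorphism twisted by a $\KK^*$-valued $2$-cocycle, and it is exactly the freeness of the lattice $M$ that trivializes this cocycle and upgrades the piecewise isomorphisms $A_m\cong A'_m$ to an isomorphism of graded algebras.
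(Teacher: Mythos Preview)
Your argument is correct. The paper itself does not prove this corollary; it is stated as a particular case of \cite[Corollary~8.12]{AlHa06} and no proof is given. Your direct construction of the graded isomorphism $\Phi(g\chi^m)=(f_mg)\chi^m$, together with the lifting of $m\mapsto E_m$ to a multiplicative family $\{f_m\}$ via the freeness of $M$, is precisely the standard mechanism underlying the cited result. In fact the paper implicitly relies on this very construction later on: in Section~\ref{sec:T-var-hor}, when applying Corollary~\ref{cor:AH}, it writes the isomorphism $A[C,\DD]\to A[C,\DD']$ explicitly as $\xi^m=\varphi^m\chi^m$ with $\divi(\varphi^m)=\DD'(m)-\DD(m)$ and $\varphi^m\cdot\varphi^{m'}=\varphi^{m+m'}$, which is exactly your $\Phi$ with $f_m=\varphi^m$.
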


Most of this paper deals with the case where the base is a curve $C$
isomorphic to $\AF^1$ or $\PP^1$. Any $\sigma$-polyhedral divisor on
$\AF^1$ is proper. If $\DD=\sum_{z\in C}\Delta_z\cdot z$ is a
$\sigma$-polyhedral divisor on $C=\PP^1$, then $\DD$ is proper if and
only if $\deg\DD:=\sum_{z\in C}\Delta_z\subsetneq \sigma$. We also
need the following result from \cite[Section 11]{AlHa06}.

\begin{corollary} \label{AH-toric} Let $\DD$ be a proper
  $\sigma$-polyhedral divisor on a smooth curve $C$. Then $X[C,\DD]$
  is toric if and only if $C=\AF^1$ and $\DD$ can be chosen (via
  Corollary~\ref{cor:AH}) supported in at most one point, or $C=\PP^1$
  and $\DD$ can be chosen (via Corollary~\ref{cor:AH}) supported in at
  most two points.
\end{corollary}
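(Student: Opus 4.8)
The plan is to use that an affine variety of dimension $\rank M+1$ is toric if and only if its coordinate algebra is isomorphic to the semigroup algebra $\KK[S]$ of a saturated affine semigroup $S$ of rank $\rank M+1$, and to compare $A[C,\DD]$ with such a semigroup algebra carrying a grading by $M\times\ZZ$ that refines the given $M$-grading. Since $\dim X[C,\DD]=\rank M+\dim C=\rank M+1$, being toric amounts to producing one extra grading making $A[C,\DD]$ a semigroup algebra.

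For the implication ``support condition $\Rightarrow$ toric'' I would compute the global sections directly. Suppose first $C=\AF^1=\spec\KK[t]$ and $\DD=\Delta\cdot z_0$ is supported at the single point $z_0=\{t=0\}$, so that $\DD(m)=h_\Delta(m)\cdot z_0$. Then
$$A_m=H^0(\AF^1,\OO_{\AF^1}(\lfloor h_\Delta(m)\rfloor\, z_0))=\bigoplus_{k\geq -\lfloor h_\Delta(m)\rfloor}\KK\,t^k,$$
and hence
$$A[C,\DD]=\bigoplus_{(m,k)\in S}\KK\,t^k\chi^m=\KK[S],$$
where $S=\{(m,k)\in\sigma^\vee_M\times\ZZ\mid \langle m,v\rangle+k\geq 0 \text{ for every vertex } v \text{ of }\Delta\}$, using $h_\Delta(m)=\min_v\langle m,v\rangle$ and the integrality of $k$. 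As $S$ is the set of lattice points of a pointed rational cone in $M_\QQ\times\QQ$ (pointedness coming from that of $\sigma$), $\KK[S]$ is a semigroup algebra and $X$ is toric. The case $C=\PP^1$ with $\DD$ supported at two points $0,\infty$ is identical, except that $\divi(t^k)=k\cdot 0-k\cdot\infty$ now imposes the two-sided bound $-\lfloor h_{\Delta_0}(m)\rfloor\leq k\leq\lfloor h_{\Delta_\infty}(m)\rfloor$, so that $S$ is cut out by $\langle m,v\rangle+k\geq 0$ and $\langle m,w\rangle-k\geq 0$ for the vertices $v,w$ of $\Delta_0,\Delta_\infty$; here properness, i.e.\ $\deg\DD=\Delta_0+\Delta_\infty\subsetneq\sigma$, is what guarantees that $S$ has full rank $\rank M+1$.

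For the converse I would use the toric downgrade. If $X$ is toric, then its big torus $\TT'$ is a maximal torus of $\Aut(X)$, so up to an equivariant automorphism the acting torus $\TT$ is a subtorus of $\TT'$; writing $N'$ for the cocharacter lattice of $\TT'$ and $\omega\subseteq N'_\QQ$ for the defining cone of the affine toric variety $X$, the inclusion $\TT\subseteq\TT'$ corresponds to a saturated sublattice $N\hookrightarrow N'$ with $N'/N\cong\ZZ$ because the complexity is one. The quotient $\pi\colon N'_\QQ\to N'_\QQ/N_\QQ\cong\QQ$ slices $\omega$, and the coarsest fan in $\QQ$ refining the images of the faces of $\omega$ is either $\{0,\QQ_{\geq 0}\}$, giving base $\AF^1$ with a single torus-fixed point, or $\{\QQ_{\leq 0},0,\QQ_{\geq 0}\}$, giving base $\PP^1$ with two torus-fixed points; the coefficients $\Delta_y$ of the resulting polyhedral divisor are the projections to $N_\QQ$ of the slices of $\omega$ over the primitive ray generators, and $\supp\DD$ consists exactly of these fixed points. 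This produces a description of $X$ with the asserted support, and the uniqueness statement of Corollary~\ref{cor:AH} (and the minimality theory behind it) lets me transport this support to the originally given $\DD$.

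The main obstacle, and the step needing the most care, is the converse: one must justify that the abstractly toric $X$ carries the given $\TT$-action as a subtorus of the big torus (conjugacy of maximal tori in $\Aut(X)$), and then match the divisor produced by the downgrade with the originally prescribed $\DD$, which is precisely where the non-uniqueness of the Altmann--Hausen presentation and Corollary~\ref{cor:AH} must be invoked. The forward computation, by contrast, is a routine identification of graded pieces.
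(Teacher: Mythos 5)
The paper does not actually prove this statement --- it is imported verbatim from \cite[Section 11]{AlHa06} --- so your proposal can only be judged on its own merits. Your forward direction is essentially correct and is the standard computation: for $C=\AF^1$ with $\DD=\Delta\cdot[0]$ one gets $A[C,\DD]=\KK[S]$ with $S=\tilde\sigma^\vee\cap(M\oplus\ZZ)$ for $\tilde\sigma=\cone\bigl((\Delta,1),(\sigma,0)\bigr)$, and for $C=\PP^1$ one adds the generators $(\Delta_\infty,-1)$, with properness ($\deg\DD\subsetneq\sigma$) guaranteeing that $\tilde\sigma$ is pointed and hence that $S$ has full rank. One small slip: it is the cone $\tilde\sigma$, not the cone in $M_\QQ\times\QQ$ whose lattice points form $S$, that is pointed; for $\sigma=\{0\}$ and $\Delta$ a single point the cone of $S$ is a half-space, yet $X\simeq\TT\times\AF^1$ is still toric. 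What matters is that $S$ is saturated of full rank, which your inequalities do give.

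The converse, however, has a genuine gap exactly at the step you flag as delicate and then assert as fact: ``its big torus $\TT'$ is a maximal torus of $\Aut(X)$, so up to an equivariant automorphism the acting torus $\TT$ is a subtorus of $\TT'$.'' For an affine variety $X$, $\Aut(X)$ is not an algebraic group but an ind-group, and no conjugacy theorem for maximal tori is available; whether a given torus action on an affine toric variety can be conjugated into the big torus is a linearization-type problem that is open in general. Indeed, this very paper says so: the remark following Corollary~\ref{cor-sp-tor} notes that even for a $2$-torus on a toric variety this is known only in dimension $3$ (via \cite{BeHa03}) and ``in higher dimension, it is an open problem whether $\TT^2$ is conjugated to a subtorus of the big torus.'' Your appeal to conjugacy therefore proves nothing under the abstract reading of ``toric'' you adopt. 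The way the statement is established in \cite{AlHa06} is within the equivariant framework, where $\TT$ sits inside the big torus by hypothesis and one performs the toric downgrade you sketch --- your slicing of $\omega$ along $N'_\QQ\to N'_\QQ/N_\QQ\cong\QQ$ is then the right picture. A second, smaller issue is your final ``transport'' step: Corollary~\ref{cor:AH} only gives the \emph{sufficient} direction of shift-equivalence (shifted divisors yield isomorphic varieties); to conclude that the originally given $\DD$ over the same $C$ can be \emph{shifted} to the small-support divisor produced by the downgrade, you need the converse uniqueness statement from the minimality theory of \cite[Section 8]{AlHa06}, which you gesture at but do not invoke precisely.
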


\subsection{Locally nilpotent derivations and $\GA$-actions}

Let $X=\spec\,A$ be an affine variety. A~derivation $\partial$ on $A$
is called \emph{locally nilpotent} (LND for short) if for every $a\in
A$ there exists $n\in\ZZ_{\geq 0}$ such that $\partial^n(a)=0$. We
denote by $\GA$ the additive group of the base field $\KK$. Given an
LND $\partial$ on $A$, the map $\phi_\partial:\GA\times A\rightarrow
A$, $\phi_\partial(t,f)=\exp(t\partial)(f)$ defines a $\GA$-action on
$X$, and any $\GA$-action on $X$ arises in this way \cite{Fre06}.

Let now $\DD$ be a proper $\sigma$-polyhedral divisor on a
semiprojective variety $Y$, and let $A=A[Y,\DD]$ be the corresponding
$M$-graded domain. A $\GA$-action on $X=\spec A$ is said
\emph{compatible} with the $\TT$-action on $X$ if the image of $\GA$
in $\Aut(X)$ is normalized by the torus $\TT$. A $\GA$-action is
compatible if and only if the corresponding LND $\partial$ on $A$ is
homogeneous i.e., if $\partial$ sends homogeneous elements to
homogeneous elements. Any homogeneous LND $\partial$ has a well
defined degree given as $\deg\partial=\deg \partial(f)-\deg f$ for any
homogeneous $f\in A\setminus\ker\partial$.

A homogeneous LND $\partial$ on $A$ extends to a derivation on $\fract
A=\KK(Y)(M)$ by the Leibniz rule, where $\KK(Y)(M)$ is the field of
fractions of $\KK(Y)[M]$. The LND $\partial$ is said to be \emph{of
  fiber type} if $\partial(\KK(Y))=0$ and \emph{of horizontal type}
otherwise. Geometrically speaking, $\partial$ is of fiber type if and
only if the general orbits of the corresponding $\GA$-action on
$X=\spec\,A$ are contained in the orbit closures of the $\TT$-action
given by the $M$-grading.

\subsection{Locally nilpotent derivations on affine toric
  varieties} \label{LND-toric}

In this section we recall the classification of homogeneous LNDs on
toric varieties given in \cite{Lie10}. A similar description is
implicit in \cite[Section 4.5]{Dem70}. As usual, for a cone $\sigma$,
we denote by $\sigma(1)$ the set of all rays of~$\sigma$ and we
identify a ray with its primitive vector.

\begin{definition} \label{srho} %
  Let $\sigma$ be a pointed cone in $N_\QQ$. We say that $e\in M$ is a
  \emph{root} of the cone $\sigma$ if the following hold:
  \begin{enumerate}[$(i)$]
  \item there exists $\rho_e\in \sigma(1)$ such that
    $\langle e,\rho_e\rangle=-1$;
  \item $\langle e,\rho\rangle\geq 0$, for all
    $\rho\in\sigma(1)\setminus\{\rho_e\}$.
\end{enumerate}
The ray $\rho_e$ is called the \emph{distinguished} ray of the root
$e$. We denote by $\RR(\sigma)$ the set of all roots of $\sigma$.
\end{definition}

One easily checks that any ray $\rho\in\sigma(1)$ is the distinguished
ray for infinitely many roots $e\in\RR(\sigma)$.  For every root $e\in
\RR(\sigma)$ we define a homogeneous derivation $\partial_e$ of degree
$e$ of the algebra $\KK[\sigma^\vee_M]$ by the formula
$$\partial_e(\chi^m)=\langle m,\rho_e\rangle\cdot
\chi^{m+e},\quad\mbox{for all}\quad m\in \sigma^\vee_M\,.$$

The following theorem gives a classification of the homogeneous LNDs
on $\KK[\sigma^\vee_M]$.

\begin{theorem} \label{toric} %
  For every root $e\in \RR(\sigma)$, the homogeneous derivation
  $\partial_{e}$ on $\KK[\sigma_M^\vee]$ is an LND of degree $e$ with
  kernel $\ker\partial_{e}=\KK[\tau_e\cap M]$, where $\tau_e$ is the facet of
  $\sigma^\vee$ dual to the distinguished ray $\rho_e$. Conversely, if
  $\partial\neq 0$ is a homogeneous LND on $\KK[\sigma_M^\vee]$, then
  $\partial=\lambda\partial_{e}$ for some root $e\in \RR(\sigma)$, and
  some $\lambda\in\KK^*$.
\end{theorem}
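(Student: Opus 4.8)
The plan is to prove two things: first that each $\partial_e$ is indeed a locally nilpotent derivation with the stated kernel, and second the converse uniqueness statement that every nonzero homogeneous LND arises this way.

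The plan is to prove the two directions separately. For the forward direction I would first verify that $\partial_e$ is a well-defined derivation of $\KK[\sigma^\vee_M]$. Since it is prescribed on monomials and extended by the Leibniz rule, the only points to check are that $\partial_e(\chi^m)$ lands in $\KK[\sigma^\vee_M]$ and that Leibniz is consistent; the latter is immediate from $\langle m+m',\rho_e\rangle=\langle m,\rho_e\rangle+\langle m',\rho_e\rangle$, and for the former note that if $\langle m,\rho_e\rangle\neq 0$ then $\langle m,\rho_e\rangle\geq 1$, so $\langle m+e,\rho_e\rangle=\langle m,\rho_e\rangle-1\geq 0$, while $\langle m+e,\rho\rangle\geq 0$ for the remaining rays by condition $(ii)$ of Definition~\ref{srho}; hence $m+e\in\sigma^\vee_M$. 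Local nilpotency then follows from the explicit formula $\partial_e^k(\chi^m)=\big(\prod_{j=0}^{k-1}(\langle m,\rho_e\rangle-j)\big)\chi^{m+ke}$, whose coefficient vanishes as soon as $k>\langle m,\rho_e\rangle$. Finally $\partial_e(\chi^m)=0$ iff $\langle m,\rho_e\rangle=0$, and the monomials with this property are exactly those supported on the facet $\tau_e=\sigma^\vee\cap\rho_e^\perp$, giving $\ker\partial_e=\KK[\tau_e\cap M]$.

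For the converse I would begin by extending a nonzero homogeneous LND $\partial$ of degree $e$ to the Laurent algebra $\KK[M]$. Every homogeneous derivation of $\KK[M]$ of degree $e$ has the form $\partial(\chi^m)=\langle m,p\rangle\,\chi^{m+e}$ for a unique $p\in N\otimes\KK$, and $p\neq 0$ since $\partial\neq 0$. The crucial constraint is that $\partial$ maps $A=\KK[\sigma^\vee_M]$ into itself: if $m\in\sigma^\vee_M$ but $m+e\notin\sigma^\vee_M$, then $\partial(\chi^m)$ lies in $A_{m+e}=0$, which forces $\langle m,p\rangle=0$. This single constraint drives the rest of the argument.

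Next I would pin down $p$. Local nilpotency rules out $e\in\sigma^\vee$: otherwise $m+ke\in\sigma^\vee_M$ for all $k$, and no factor $\langle m+je,p\rangle=\langle m,p\rangle+j\langle e,p\rangle$ can vanish for every $m$, contradicting nilpotency. Hence $S:=\{\rho\in\sigma(1)\mid\langle e,\rho\rangle<0\}$ is nonempty. For each $\rho_i\in S$, a lattice point $m\in\relint(\tau_i)$, with $\tau_i=\sigma^\vee\cap\rho_i^\perp$, satisfies $\langle m,\rho_i\rangle=0<-\langle e,\rho_i\rangle$, so $m+e\notin\sigma^\vee$ and the constraint yields $\langle m,p\rangle=0$. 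As these $m$ range over the relative interior of the facet $\tau_i$ they span the hyperplane $\rho_i^\perp$, so $p$ vanishes on $\rho_i^\perp$ and therefore $p\in\KK\rho_i$. If $S$ contained two distinct rays this would force $p=0$; hence $S=\{\rho_e\}$ is a single ray, $p=\lambda\rho_e$ for some $\lambda\in\KK^*$, and condition $(ii)$ of a root holds since $\langle e,\rho\rangle\geq 0$ for $\rho\neq\rho_e$.

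The main obstacle is upgrading $\langle e,\rho_e\rangle<0$ to the precise value $\langle e,\rho_e\rangle=-1$ of condition $(i)$. Here I would exploit primitivity of $\rho_e$: choosing $m^*\in M$ with $\langle m^*,\rho_e\rangle=1$ and a lattice point $m_0\in\relint(\tau_e)$, the point $m=m^*+km_0$ lies in $\sigma^\vee_M$ for $k\gg 0$ and still satisfies $\langle m,\rho_e\rangle=1$. If $\langle e,\rho_e\rangle\leq -2$, then $\langle m+e,\rho_e\rangle<0$, so $m+e\notin\sigma^\vee$, yet $\langle m,p\rangle=\lambda\neq 0$, contradicting the constraint. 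Thus $\langle e,\rho_e\rangle=-1$, so $e\in\RR(\sigma)$ with distinguished ray $\rho_e$, and comparing formulas gives $\partial=\lambda\partial_e$. I expect the only delicate points to be the identification of the degree-$e$ homogeneous derivations of $\KK[M]$ with vectors $p\in N\otimes\KK$ and the existence of a suitable lattice point in $\relint(\tau_e)$.
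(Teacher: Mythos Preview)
The paper does not supply a proof of Theorem~\ref{toric}: it is stated in Section~\ref{LND-toric} as a result recalled from \cite{Lie10}, with the remark that the description is already implicit in Demazure \cite[Section~4.5]{Dem70}. There is therefore no proof in the paper to compare against.

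Your argument is correct and follows the natural direct route. The forward direction is a clean verification. For the converse, the chain of reductions---extend to $\KK[M]$, identify degree-$e$ homogeneous derivations with vectors $p\in N\otimes\KK$ via additivity of $m\mapsto c_m$, use the vanishing constraint $m\in\sigma^\vee_M,\ m+e\notin\sigma^\vee_M\Rightarrow\langle m,p\rangle=0$ to force $p\in\KK\rho_e$ and $|S|=1$, and finally use primitivity of $\rho_e$ to pin down $\langle e,\rho_e\rangle=-1$---is sound. The two points you flag as delicate are routine: Leibniz gives $c_{m+m'}=c_m+c_{m'}$, hence $c_m=\langle m,p\rangle$; and $\relint(\tau_e)\cap M\neq\emptyset$ because $\tau_e$ is a rational $(n-1)$-dimensional cone. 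One cosmetic tightening: your dismissal of $e\in\sigma^\vee$ is terse. A clean way is to note that then $\chi^e\in A$ and $\partial^k(\chi^e)=k!\,\langle e,p\rangle^k\chi^{(k+1)e}$, forcing $\langle e,p\rangle=0$; but then $\partial^k(\chi^m)=\langle m,p\rangle^k\chi^{m+ke}$ is never zero for $m\notin p^\perp$.
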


\subsection{Locally nilpotent derivations on affine $\TT$-varieties}
\label{sec:LND-T}

We give first a classification of homogeneous LNDs of fiber type on
$\TT$-varieties of arbitrary complexity given in \cite{Lie10b}.

Letting $\DD=\sum_Z\Delta_Z\cdot Z$ be a proper $\sigma$-polyhedral
divisor on a semiprojective variety $Y$, we let $A=A[Y,\DD]$. For
every prime divisor $Z\subseteq Y$, we let $\{v_{i,Z}\mid
i=1,\cdots,r_Z\}$ be the set of all vertices of $\Delta_Z$. Letting
$e$ be a root of the cone $\sigma$, we define the divisor
$$\DD(e)= \sum_Z \min_i\{v_{i,Z}(e)\}\cdot Z,\quad\mbox{and}\quad \Phi_e^*=
H^0(Y,\OO_Y(\DD(e)))\setminus \{0\}\,.$$ %
Remark that the evaluation divisor $\DD(m)$ is only defined for $m\in
\sigma^\vee$ and $e\notin\sigma^\vee$. The reason for the above
notation is that taking \eqref{support-function} as the definition of
support function, we obtain the above formula for the evaluation
divisor, which can be evaluated at any $m\in M_\QQ$.

For every $\varphi\in\Phi_e^*$ we let
$$\partial_{e,\varphi}(f\chi^m)=\langle m,\rho_e\rangle\cdot
\varphi\cdot f\chi^{m+e},\quad\mbox{for all}\quad m\in
\sigma^\vee_M,\quad\mbox{and}\quad f\in \KK(Y)\,.$$

The following theorem gives a classification of the homogeneous LNDs
of fiber type on $A[Y,\DD]$.

\begin{theorem} \label{lnd-fiber} %
  For every root $e\in\RR(\sigma)$ and $\varphi\in\Phi_e^*$, the
  derivation $\partial_{e,\varphi}$ is a homogeneous LND of fiber type
  on $A=A[Y,\DD]$ of degree $e$ with kernel
  $$\ker\partial_{e,\varphi}=\bigoplus_{m\in\tau_e\cap M}A_m\chi^m\,,$$
  where $\tau_e\subseteq \sigma^\vee$ is the facet dual to the
  distinguished ray $\rho_e$. Conversely, if $\partial\neq 0$ is a
  homogeneous LND of fiber type on $A$, then
  $\partial=\partial_{e,\varphi}$ for some root $e\in \RR(\sigma)$ and
  some $\varphi\in\Phi_e^*$.
\end{theorem}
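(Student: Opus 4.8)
The plan is to prove Theorem~\ref{lnd-fiber} by reducing the general fiber-type situation to the toric case already settled in Theorem~\ref{toric}. Since a fiber-type LND $\partial$ satisfies $\partial(\KK(Y))=0$, it is $\KK(Y)$-linear in the appropriate sense, and I expect it to be determined by its action on the ``$M$-part'' of the algebra. Concretely, I would first pass to the field of fractions: extend $\partial$ to $\fract A=\KK(Y)(M)$ and observe that, because $\partial$ kills $\KK(Y)$, the pair $(\KK(Y), \partial)$ behaves like a homogeneous derivation on the toric algebra $\KK[\sigma^\vee_M]$ base-changed to the field $\KK(Y)$. The first step is therefore to verify directly that each $\partial_{e,\varphi}$ as defined is a well-defined homogeneous derivation of $A$ of degree $e$: one must check that it maps the graded piece $A_m\chi^m$ into $A_{m+e}\chi^{m+e}$, i.e.\ that $\varphi\cdot f\in H^0(Y,\OO_Y(\DD(m+e)))$ whenever $f\in A_m=H^0(Y,\OO_Y(\DD(m)))$. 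This is where the definition of $\Phi_e^*$ via the divisor $\DD(e)$ enters: the key inequality is $\DD(m)+\DD(e)\le\DD(m+e)$ as $\QQ$-divisors, which follows because the support function $h_Z$ of a polyhedron is concave, so $\min_i v_{i,Z}(m)+\min_i v_{i,Z}(e)\le\min_i v_{i,Z}(m+e)$ coefficientwise. Combined with $\divi\varphi+\DD(e)\ge 0$ this yields $\divi(\varphi f)+\DD(m+e)\ge 0$, as needed.

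Next I would establish local nilpotency. The coefficient $\langle m,\rho_e\rangle$ is the same as in the toric derivation $\partial_e$ of Theorem~\ref{toric}, and iterating $\partial_{e,\varphi}$ produces the factor $\langle m,\rho_e\rangle\langle m+e,\rho_e\rangle\cdots$ just as in the toric computation, together with accumulating powers of $\varphi$. Since $e$ is a root, $\langle e,\rho_e\rangle=-1$, so the arithmetic progression $\langle m+ke,\rho_e\rangle=\langle m,\rho_e\rangle-k$ hits zero after finitely many steps, forcing $\partial_{e,\varphi}^{N}(f\chi^m)=0$ for $N$ large. This is essentially the same nilpotency argument as in the toric setting, now carried out coefficientwise over $\KK(Y)$. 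The kernel computation also parallels the toric case: $\partial_{e,\varphi}(f\chi^m)=0$ forces $\langle m,\rho_e\rangle=0$, which says exactly $m\in\tau_e$, the facet of $\sigma^\vee$ dual to $\rho_e$; hence $\ker\partial_{e,\varphi}=\bigoplus_{m\in\tau_e\cap M}A_m\chi^m$.

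For the converse, let $\partial\ne 0$ be an arbitrary homogeneous LND of fiber type on $A$. Homogeneity gives it a well-defined degree $e\in M$, and by fiber type $\partial(\KK(Y))=0$. I would extend $\partial$ to $\fract A$ and restrict attention to the subalgebra generated by the characters, obtaining an induced homogeneous LND on $\KK(Y)[\sigma^\vee_M]$, a base change of the toric algebra $\KK[\sigma^\vee_M]$ to the field $L=\KK(Y)$. Applying the classification of Theorem~\ref{toric} over $L$ (the proof of that theorem goes through verbatim for the toric algebra over any field of characteristic zero containing $\KK$), I conclude $e$ must be a root of $\sigma$ and that $\partial(\chi^m)=\lambda\langle m,\rho_e\rangle\chi^{m+e}$ for some $\lambda\in L^*$. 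Writing $\lambda=\varphi\in\KK(Y)$, this forces $\partial=\partial_{e,\varphi}$ on all of $A$. The final point is to check the integrality constraint, namely that $\varphi\in\Phi_e^*$, i.e.\ $\varphi\in H^0(Y,\OO_Y(\DD(e)))$: this must hold because $\partial$ preserves $A$, and applying $\partial$ to a suitable element $f\chi^m\in A_m\chi^m$ with $\langle m,\rho_e\rangle\ne 0$ and comparing poles along each $Z$ yields $\divi\varphi+\DD(e)\ge 0$.

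I expect the main obstacle to be the converse, specifically making the reduction to the toric algebra over $\KK(Y)$ fully rigorous while controlling the integrality of $\varphi$. The subtle step is that knowing $\partial$ preserves the \emph{integral} graded pieces $A_m\chi^m$ (not merely the field $\fract A$) is what pins $\varphi$ down to lie in $\Phi_e^*$ rather than in an arbitrary element of $\KK(Y)$; one has to extract this pole condition simultaneously at every prime divisor $Z$, using that there exist enough lattice points $m\in\sigma^\vee_M$ with $\langle m,\rho_e\rangle\ne 0$ and with $f\in A_m$ realizing the extremal order of vanishing along each $Z$. Handling this cleanly, and ensuring the extension of $\partial$ to $\fract A$ does indeed restrict to a derivation of the toric subalgebra over $\KK(Y)$, is the crux of the argument.
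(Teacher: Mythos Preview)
The paper does not actually prove Theorem~\ref{lnd-fiber}: it is stated in the preliminaries (Section~\ref{sec:LND-T}) as a result quoted from \cite{Lie10b}, without proof. So there is no ``paper's own proof'' to compare against.

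That said, your outline is the standard and correct strategy, and it matches the argument in \cite{Lie10b}. The forward direction is exactly as you describe: the concavity inequality $\DD(m)+\DD(e)\le\DD(m+e)$ together with $\divi\varphi+\DD(e)\ge 0$ shows $\partial_{e,\varphi}$ preserves $A$; nilpotency and the kernel computation are identical to the toric case because the numerical coefficient $\langle m,\rho_e\rangle$ is the same. For the converse, the reduction to a homogeneous $\KK(Y)$-linear derivation on (a localization containing) $\KK(Y)[\sigma^\vee_M]$ is the right move, and Theorem~\ref{toric} does go through over any field of characteristic zero. Two points deserve a bit more care than your sketch gives them. First, local nilpotency of $\partial$ is only known on $A$, not on all of $\KK(Y)[\sigma^\vee_M]$, so you should argue that for each $m\in\sigma^\vee_M$ there is a nonzero $f\in A_m$ (after passing to a suitable multiple of $m$ if necessary) so that nilpotency on $f\chi^m$ forces nilpotency on $\chi^m$; this is what lets you invoke the toric classification. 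Second, the verification $\varphi\in\Phi_e^*$ requires, for each prime divisor $Z$, an $m$ with $\langle m,\rho_e\rangle\ne 0$ and an $f\in A_m$ whose order along $Z$ realizes $\lfloor h_Z(m)\rfloor$; one then compares orders in $\partial(f\chi^m)\in A_{m+e}\chi^{m+e}$. Both points are handled by choosing $m$ deep enough in $\sigma^\vee$ so that $\DD(m)$ is integral and very ample, but you should say so explicitly.
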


\medskip

The classification of LNDs of horizontal type is more involved and is
only available in the case of complexity one. Here, we give an
improved presentation of the classification given in \cite[Theorem
3.28]{Lie10}.

Since the complexity is one the base variety $Y$ is a smooth curve
$C$. Let $\DD=\sum_{z\in C}\Delta_z\cdot z$ be a proper
$\sigma$-polyhedral divisor on $C$, and let $X=X[C,\DD]$.  If
$A=A[C,\DD]$ admits a homogeneous LND of horizontal type, then $C$ is
isomorphic either to $\AF^1$ or to $\PP^1$. In the following we assume
that $C=\AF^1$ or $C=\PP^1$.

\begin{definition} \label{def:col-pol}
  A \emph{colored} $\sigma$-polyhedral divisor on $C$ is a collection
  $\tDD=\{\DD;v_z,\forall z\in C\}$ if $C=\AF^1$ and
  $\tDD=\{\DD,z_\infty;v_z,\forall z\in C\setminus{z_\infty}\}$ if
  $C=\PP^1$, satisfying the following conditions:
  \begin{enumerate}[(1)]
  \item $\DD=\sum_{z\in C}\Delta_z\cdot z$ is a proper
    $\sigma$-polyhedral divisor on $C$, $z_\infty\in C$, and $v_z$ is
    a vertex of $\Delta_z$. We let $C'=C$ if $C=\AF^1$ and
    $C'=C\setminus \{z_\infty\}$ if $C=\PP^1$.
  \item $v_{\deg}:=\sum_{z\in C'} v_z$ is a vertex of $\deg \DD|_{C'}$, and
  \item $v_z\in N$ with at most one exception.
  \end{enumerate}

  We also let $z_0\in C'$ be such that $v_z\in N$ for all $z\in
  C'\setminus\{z_0\}$. We say that $\tDD$ is a \emph{coloring} of
  $\DD$ and we call $z_0$ the \emph{marked point}, $z_\infty$ the
  \emph{point at infinity} if $C=\PP^1$, and $v_z$ the \emph{colored
    vertex} of the polyhedron $\Delta_z$. 
\end{definition}

Remark that the above notion of coloring is independent from the
notion of coloring in the theory of spherical varieties.

Let $\tDD$ be a colored $\sigma$-polyhedral divisor on $C$. Letting
$\omega\subseteq N_\QQ$ be the cone generated by $\deg\DD|_{C'}-v_{\deg}$ we
let $\tomega\subseteq (N\oplus\ZZ)_\QQ$ be the cone generated by
$(\omega,0)$ and $(v_{z_0},1)$ if $C=\AF^1$, and by $(\omega,0)$,
$(v_{z_0},1)$ and $(\Delta_{z_\infty}+v_{\deg}-v_{z_0}+\omega,-1)$ if
$C=\PP^1$. Denote by $d$ the minimal positive integer such
that $d\cdot v_{z_0}\in N$. We call $\tomega$ the \emph{associated
  cone} of the colored $\sigma$-polyhedral divisor~$\DD$.

\begin{definition} \label{col-div} %
  A pair $(\tDD,e)$, where $\tDD$ is a colored $\sigma$-polyhedral
  divisor on $C$ and $e\in M$, is said to be \emph{coherent} if
  \begin{enumerate} [$(1)$]
  \item There exists $s\in\ZZ$ such that $\te=(e,s)\in M\oplus\ZZ$ is
    a root of the associated cone $\tomega$ with distinguished ray
    $\trho=(d\cdot v_{z_0},d)$. In this case
    $s=-\nicefrac{1}{d}-v_{z_0}(e)$.
  \item $v(e)\geq 1+v_{z}(e)$, for every $z\in C'\setminus\{z_0\}$
    and every vertex $v\neq v_z$ of the polyhedron $\Delta_z$.
  \item $d\cdot v(e)\geq 1+d\cdot v_{z_0}(e)$, for every vertex $v\neq
    v_{z_0}$ of the polyhedron $\Delta_{z_0}$.
  \item If $Y=\PP^1$, then $d\cdot v(e)\geq -1-d\cdot v_{\deg}(e)$, for every vertex $v$ of the polyhedron
    $\Delta_{z_\infty}$.
  \end{enumerate}
\end{definition}

Let now $L=\{m\in M\mid v_{z_0}(m)\in \ZZ\}$ and $\varphi^m\in \KK(C)$
be a rational function with
$$\divi(\varphi^m)|_{C'}+\DD(m)|_{C'}=0,\quad\mbox{and}\quad
\varphi^m\cdot\varphi^{m'}=\varphi^{m+m'} \quad\mbox{for all}\quad
m,m'\in \omega^\vee_L\,.$$ %
The choice of $\varphi^m$ as above is possible since $\DD(m)$ is
linear for $m\in \omega^\vee$.

The following theorem gives a classification of homogeneous LNDs of
horizontal type on $A[C,\DD]$. It corresponds to \cite[Theorem
3.28]{Lie10}.

\begin{theorem} \label{lnd-hor} %
  Let $X=X[C,\DD]$ be a normal affine $\TT$-variety of complexity
  one. Then the homogeneous LNDs of horizontal type on
  $\KK[X]=A[C,\DD]$ are in bijection with the coherent pairs
  $(\tDD,e)$, where $\tDD$ is a coloring of $\DD$ and $e\in
  M$. Furthermore, the homogeneous LND $\partial$ corresponding to
  $(\tDD,e)$ has degree $e$ and kernel
  $$\ker\partial=\bigoplus_{m\in \omega^\vee_L} \KK\varphi^m\,.$$
\end{theorem}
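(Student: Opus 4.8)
The plan is to prove Theorem~\ref{lnd-hor} by reducing the horizontal-type situation to the toric root formalism of Theorem~\ref{toric} via a ``downgrading'' argument that trades one unit of complexity for the extra $\ZZ$-factor appearing in the associated cone $\tomega$. Concretely, if $\partial$ is a nonzero homogeneous LND of horizontal type on $A=A[C,\DD]$, then $\partial(\KK(C))\neq 0$, and since $\partial$ is homogeneous of degree $e\in M$, the image $\partial(t)$ of a local coordinate $t$ on $C$ gives a distinguished point on $C$; this is what will single out the marked point $z_0$ and force the coloring data. First I would set up the field extension $\fract A=\KK(C)(M)$ and observe that $\partial$, being locally nilpotent and horizontal, descends the transcendence degree by one, so that the subalgebra $\ker\partial$ has the expected size $\bigoplus_{m\in\omega^\vee_L}\KK\varphi^m$; establishing this kernel formula is the natural first milestone and it pins down the cone $\omega$ and the sublattice $L=\{m\mid v_{z_0}(m)\in\ZZ\}$.

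Next I would carry out the core construction: view the $M$-grading together with the extra grading induced by $\partial$ as a combined $(M\oplus\ZZ)$-grading, i.e.\ construct an auxiliary affine toric-like variety $\tX$ of dimension one higher whose weight cone is exactly the associated cone $\tomega$. The cone $\tomega$ is engineered precisely so that $\partial$ becomes, after this downgrading, a \emph{fiber-type} or even toric LND on the enlarged weight monoid; the generators $(\omega,0)$, $(v_{z_0},1)$, and (in the $\PP^1$ case) $(\Delta_{z_\infty}+v_{\deg}-v_{z_0}+\omega,-1)$ encode respectively the horizontal directions killed by $\partial$, the marked point contribution, and the point at infinity. Under this identification the pair $(\tDD,e)$ produces the lifted element $\te=(e,s)$, and the requirement that $\partial$ be locally nilpotent translates verbatim into $\te$ being a root of $\tomega$ with distinguished ray $\trho=(d\cdot v_{z_0},d)$. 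This is the step where I would apply Theorem~\ref{toric} (or its $\GA$-action incarnation) to the one-higher-dimensional object, and the normalization $\langle\te,\trho\rangle=-1$ yields the stated value $s=-\nicefrac{1}{d}-v_{z_0}(e)$ by a direct pairing computation.

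The remaining work is to verify that the inequality conditions $(2)$, $(3)$, $(4)$ of Definition~\ref{col-div} are exactly the non-negativity conditions $\langle\te,\rho\rangle\geq 0$ for the other rays $\rho\in\tomega(1)$, together with the integrality (local nilpotency) constraints coming from evaluating $\partial$ on the sections $f\chi^m$. I would check, divisor by divisor, that for $z\in C'\setminus\{z_0\}$ the contribution of the non-colored vertices $v\neq v_z$ gives condition $(2)$, that the colored vertex at $z_0$ together with the denominator $d$ gives condition $(3)$, and that the point at infinity on $\PP^1$ gives condition $(4)$; these are precisely the defining inequalities of the roots of $\tomega$ read off from its ray generators. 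Finally I would confirm that every coherent pair arises from an actual LND by writing down $\partial$ explicitly on the sections $\varphi^m\chi^m$ using the functions $\varphi^m$ and showing the degree is $e$, thereby obtaining the claimed bijection.

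The hard part will be establishing that the lift to $\tomega$ is both well-defined and faithful: one must show that an arbitrary horizontal LND $\partial$ necessarily selects a single marked point $z_0$ (so that condition $(3)$ of Definition~\ref{def:col-pol}, the ``at most one exception'' clause, is automatic rather than imposed) and that the colored vertices $v_z$ are forced by $\partial$ rather than chosen. Equivalently, the delicate issue is to prove that the downgraded derivation on $\tX$ is genuinely of fiber type and LND, since local nilpotency is not preserved under arbitrary regradings; one has to control the denominators of the $\varphi^m$ carefully along $z_0$ and, in the $\PP^1$ case, reconcile the behavior at $z_\infty$ with properness of $\DD$ (the condition $\deg\DD\subsetneq\sigma$). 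This bookkeeping, matching the combinatorial inequalities to the toric root inequalities after the dimension shift, is where the real content of \cite[Theorem 3.28]{Lie10} lies.
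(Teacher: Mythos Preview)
The paper does not prove Theorem~\ref{lnd-hor}; it is stated as a preliminary result, explicitly attributed to \cite[Theorem~3.28]{Lie10}, and quoted without proof in Section~\ref{sec:LND-T}. So there is no ``paper's own proof'' to compare against.

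Your outline is a reasonable high-level sketch of the strategy behind the cited result: passing to an $(M\oplus\ZZ)$-graded situation so that the horizontal LND becomes a toric-type LND governed by a root of the enlarged cone $\tomega$, and then reading off the coherence conditions (2)--(4) as the root inequalities. That is indeed the mechanism; the explicit formula \eqref{eq:2} in the paper already exhibits $\partial$ in toric form with respect to $\te$ and $\trho$, which confirms your intuition about the downgrading. However, what you have written is a plan rather than a proof: you correctly flag as ``the hard part'' exactly the steps that carry the content (uniqueness of the marked point, that the colored vertices are forced, preservation of local nilpotency under the regrading, and the $\PP^1$ bookkeeping at $z_\infty$), and none of these are actually carried out. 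In particular, your claim that conditions (2)--(4) of Definition~\ref{col-div} are ``exactly the non-negativity conditions $\langle\te,\rho\rangle\geq 0$ for the other rays'' is not quite right: conditions (2) and (3) concern vertices of $\Delta_z$ for $z\in C'$, which do not in general appear as rays of $\tomega$ (only the vertices of $\Delta_{z_\infty}$ and the cone $\omega$ do), so those inequalities come instead from the requirement that $\partial$ preserve the subring $A[C,\DD]\subseteq\KK(C)[M]$, not from the root condition on $\tomega$ alone. If you want a self-contained argument you will need to supply that step separately.
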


Let us give an explicit formula for the homogeneous LND $\partial$
associated to the coherent pair $(\tDD,e)$. Without loss of generality
we may assume $z_0=0$ and $z_\infty=\infty$ if $C=\PP^1$. By
Corollary~\ref{cor:AH} we may assume $v_{z}=\bar{0}\in N$ for all
$z\in C'\setminus\{z_0\}$. Letting $\KK[C']=\KK[t]$, the homogeneous
LND of horizontal type $\partial$ corresponding to the coherent pair
$(\tDD,e)$ is given by
\begin{align}
  \label{eq:1}
  \partial(\chi^m\cdot t^r)=d(v_{0}(m)+r)\cdot\chi^{m+e}\cdot
  t^{r+s}, \quad\mbox{for all}\quad (m,r)\in
  M\oplus \ZZ\,.
\end{align}
Furthermore, if we let $\tm=(m,r)\in M\oplus \ZZ$ and 
$\chi^{\tm}=\chi^m\cdot t^r$, then \eqref{eq:1} can be written
as in the toric case
\begin{align}
  \label{eq:2}
  \partial(\chi^{\tm})=\langle\tm,\trho\rangle \cdot\chi^{\tm+\te},
  \quad\mbox{for all}\quad \tm\in M\oplus \ZZ\,.
\end{align}

We also need the following two technical lemmas. They follow from
Theorem~\ref{lnd-hor}. 

\begin{lemma}[cf. {\cite[Lemma~4.5]{Lie10}}] \label{lm:kernel} Let
  $X=X[C,\DD]$ and let $\partial_1$ and $\partial_2$ be two
  homogeneous LNDs of horizontal type on $\KK[X]$. Assume that
  $z_\infty(\partial_1)=z_\infty(\partial_2)$. Then
  $\ker\partial_1\cap\ker\partial_2\supsetneq \KK$ if and only if
  $\omega(\partial_1)\cap\omega(\partial_2)\supsetneq
  \{0\}$. Furthermore, if $\rank M=2$ this is the case if and only if
  the vertices $v_{\deg}(\partial_1)$ and $v_{\deg}(\partial_1)$ are
  adjacent vertices in the polyhedron $\deg \DD|_{C'}$.
\end{lemma}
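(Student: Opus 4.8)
The plan is to unwind the kernel description of Theorem~\ref{lnd-hor} and translate each equivalence into combinatorial data. By that theorem, for a homogeneous LND of horizontal type $\partial_j$ ($j=1,2$) with associated cone $\tomega(\partial_j)$ and lattice $L_j$, the kernel is $\ker\partial_j=\bigoplus_{m\in\omega(\partial_j)^\vee_{L_j}}\KK\varphi^m$. Here I write $\omega(\partial_j)$ for the cone generated by $\deg\DD|_{C'}-v_{\deg}(\partial_j)$. The first reduction I would make is to note that, since $z_\infty(\partial_1)=z_\infty(\partial_2)$, both LNDs share the same decomposition $C'=C\setminus\{z_\infty\}$ (or $C'=C$ in the $\AF^1$ case), so the functions $\varphi^m$ and the ambient grading live over the \emph{same} base and no discrepancy arises from differing points at infinity. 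This is exactly the hypothesis that makes the two kernels comparable as subalgebras of $\KK(Y)[M]$.

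For the first equivalence, I would argue that $\ker\partial_1\cap\ker\partial_2\supsetneq\KK$ reduces to a statement about the cones $\omega(\partial_j)^\vee$. Since each kernel is spanned by the $\varphi^m$ for $m$ in a saturated semigroup $\omega(\partial_j)^\vee_{L_j}$, and since the $\varphi^m$ are multiplicatively compatible ($\varphi^m\varphi^{m'}=\varphi^{m+m'}$) and linearly independent over $\KK$ (being monomials in $\KK(C)(M)$ of distinct degrees), a nonconstant common element exists if and only if there is some nonzero $m$ lying in \emph{both} dual cones, i.e. $\omega(\partial_1)^\vee\cap\omega(\partial_2)^\vee\supsetneq\{0\}$. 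Passing to duals via the standard biduality $(\omega_1\cap\omega_2)^\vee=\cone(\omega_1^\vee\cup\omega_2^\vee)$ and the fact that two full-dimensional-or-not cones meet in a full-dimensional dual exactly when their primal cones do not span, I would show this is equivalent to $\omega(\partial_1)\cap\omega(\partial_2)\supsetneq\{0\}$. The one point requiring care is the role of the lattices $L_j$: because the semigroup is saturated, $m\in\omega_j^\vee\cap M_\QQ$ admits a positive multiple in $\omega_j^\vee_{L_j}$, so the lattice refinement does not change whether a nonconstant common kernel element exists.

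For the second equivalence, specializing to $\rank M=2$, I would use that $\deg\DD|_{C'}$ is a $2$-dimensional $\sigma$-polyhedron and that $\omega(\partial_j)$ is the tangent (recession-type) cone at the vertex $v_{\deg}(\partial_j)$, generated by the edge directions emanating from that vertex. Two such tangent cones at vertices $v$ and $v'$ of a $2$-dimensional polyhedron intersect nontrivially precisely when the vertices are adjacent (sharing an edge), since in the plane the cone at a vertex is bounded by the two incident edges, and two vertex-cones share a ray if and only if they share the edge joining them. I would make this rigorous by a short case analysis on whether $v$ and $v'$ are joined by an edge of $\deg\DD|_{C'}$, treating separately the possibility that one is an unbounded edge along a ray of the tail cone $\sigma$.

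\textbf{The main obstacle} I anticipate is the planar adjacency claim in the last step: one must rule out the degenerate situation where two \emph{non}-adjacent vertices nonetheless have tangent cones sharing a ray, which can happen for nonconvex configurations but is excluded here by convexity of $\deg\DD|_{C'}$. I would handle this by invoking that a polyhedron is the intersection of its supporting half-planes, so the tangent cone at $v$ is $\{p : \langle m,p\rangle\ge\langle m,v\rangle \text{ for the two edges at } v\}$; a shared ray would force a common supporting line through both vertices, which forces them to lie on a common edge, hence to be adjacent. The forward direction of the second equivalence (adjacent $\Rightarrow$ cones meet) is then immediate since the shared edge direction lies in both cones.
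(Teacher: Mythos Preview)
The paper gives no proof of this lemma beyond the sentence ``They follow from Theorem~\ref{lnd-hor}'' and the citation to \cite[Lemma~4.5]{Lie10}; your plan to read off the kernel from Theorem~\ref{lnd-hor} and reduce everything to cone combinatorics is therefore exactly the intended route, and your first reduction---that $\ker\partial_1\cap\ker\partial_2\supsetneq\KK$ is equivalent to $\omega(\partial_1)^\vee\cap\omega(\partial_2)^\vee\supsetneq\{0\}$---is correct, including the saturation remark handling the sublattices $L_j$.

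There is, however, a genuine gap in the duality step where you pass from $\omega_1^\vee\cap\omega_2^\vee\supsetneq\{0\}$ to $\omega_1\cap\omega_2\supsetneq\{0\}$. This equivalence is \emph{false} for tangent cones of a convex polyhedron. For a bounded triangle in rank~$2$, the tangent cones at two adjacent vertices meet only at the origin (the shared edge contributes opposite directions to the two cones), while their normal cones do share a ray. Conversely, whenever the tail cone $\sigma$ is nonzero one has $\omega_1\cap\omega_2\supseteq\sigma\supsetneq\{0\}$ regardless of adjacency; yet in the applications (e.g.\ the third row of the table in Theorem~\ref{th:sl-emb}, where $\sigma$ is two-dimensional) one needs $\ker\partial_+\cap\ker\partial_-=\KK$. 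So the condition that actually governs the kernel intersection is the one on the \emph{dual} cones $\omega(\partial_j)^\vee$, i.e.\ the normal cones at the vertices $v_{\deg}(\partial_j)$---and this is precisely what your first reduction produces. The printed statement appears to have $\omega$ where $\omega^\vee$ is meant (compare the formulation in \cite{Lie10}); with that correction your rank-$2$ adjacency argument via the normal fan is sound: two vertices of a planar polyhedron have normal cones sharing a ray exactly when they lie on a common edge. You should drop the attempted passage from dual cones back to primal cones and state the first equivalence in terms of $\omega^\vee$.
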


\begin{lemma}[cf. {\cite[Remark~3.27]{Lie10}}] \label{hor-int} %
  Let $X=X[C,\DD]$ and let $\partial$ be a homogeneous LND of
  horizontal type on $\KK[X]$ of degree $e$. Then
  \begin{enumerate}
  \item If $C=\AF^1$ then $e\in \omega^\vee\subseteq\sigma^\vee$.
  \item If $C=\PP^1$ and for every ray $\rho\in
    \sigma(1)\cap\omega(1)$ we have $\rho\cap \deg\DD=\emptyset$, then
    $e\in \omega^\vee\subseteq\sigma^\vee$.
  \end{enumerate}

\end{lemma}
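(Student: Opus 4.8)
The plan is to reduce the inclusion $e\in\omega^\vee\subseteq\sigma^\vee$ to a ray-by-ray inequality and then read off each inequality from the data defining the coherent pair $(\tDD,e)$. First I observe that the inclusion $\omega^\vee\subseteq\sigma^\vee$ is automatic: the recession cone of the polyhedron $\deg\DD|_{C'}-v_{\deg}$ is $\sigma$, and the cone generated by a polyhedron contains that polyhedron's recession cone, so $\sigma\subseteq\omega$ and dualizing gives $\omega^\vee\subseteq\sigma^\vee$. Hence the whole statement amounts to $e\in\omega^\vee$, that is, $\langle e,\rho\rangle\geq 0$ for every ray $\rho\in\omega(1)$. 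Since $\omega=\cone(\deg\DD|_{C'}-v_{\deg})$ with $0$ a vertex of the generating polyhedron, every such ray is either a direction $V-v_{\deg}$ toward a vertex $V$ of $\deg\DD|_{C'}$, or a recession ray $\rho\in\sigma(1)$.

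The vertex directions are handled directly by coherence. A vertex $V$ of the Minkowski sum $\deg\DD|_{C'}=\sum_{z\in C'}\Delta_z$ is a sum $V=\sum_z v_z'$ of vertices $v_z'$ of the $\Delta_z$, so conditions $(2)$ and $(3)$ of Definition~\ref{col-div} give $\langle e,v_z'\rangle\geq\langle e,v_z\rangle$ for each $z$, whence $\langle e,V-v_{\deg}\rangle=\sum_z\langle e,v_z'-v_z\rangle\geq 0$. This settles $\langle e,\rho\rangle\geq0$ for all rays of $\omega$ of the first kind, in both cases $C=\AF^1$ and $C=\PP^1$.

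For the recession rays $\rho\in\sigma(1)\cap\omega(1)$ I would exploit that $\te=(e,s)$ is a root of $\tomega$ with distinguished ray $\trho=(d\,v_{z_0},d)$: by Definition~\ref{srho} one has $\langle\te,\mu\rangle\geq0$ for every ray $\mu\neq\trho$ of $\tomega$, and hence $\langle\te,\xi\rangle\geq0$ for every $\xi$ lying in a face of $\tomega$ not containing $\trho$. Since $\langle\te,(\rho,0)\rangle=\langle e,\rho\rangle$, it suffices to place $(\rho,0)$ on such a face. When $C=\AF^1$ this is immediate: the last-coordinate functional is nonnegative on the generators $(\omega,0)$ and $(v_{z_0},1)$ of $\tomega$ and cuts out $(\omega,0)$ as a face, so $(\rho,0)$ is a ray of $\tomega$ distinct from $\trho$ and the root condition applies verbatim (indeed for $C=\AF^1$ this already covers every ray of $\omega$).

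For $C=\PP^1$ the extra generators $(\Delta_{z_\infty}+v_{\deg}-v_{z_0}+\omega,-1)$ may pull $(\rho,0)$ into the interior of $\tomega$, and this is exactly where the hypothesis $\rho\cap\deg\DD=\emptyset$ must be used. I would produce a supporting functional $\psi=(u,c)$ with $\psi\geq 0$ on $\tomega$, $\psi(\rho,0)=0$ and $\psi(\trho)>0$; unwinding the three families of generators, this requires $u\in\omega^\vee$, $\langle u,\rho\rangle=0$, and $\min_{q\in Q}\langle u,q+v_{z_0}\rangle>0$ where $Q=\Delta_{z_\infty}+v_{\deg}-v_{z_0}+\omega$. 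Using the sandwich $\deg\DD\subseteq Q+v_{z_0}\subseteq\deg\DD+\omega$ one checks that for $u\in\omega^\vee$ this minimum equals $h_{\deg\DD}(u)$, so the requirement becomes: find $u$ in the facet of $\omega^\vee$ dual to $\rho$ with $\langle u,\cdot\rangle>0$ on $\deg\DD$. The hard part is precisely this convex-geometry existence: since $\deg\DD$ is a polyhedron with tail cone $\sigma$ contained in $\sigma$, and the face of $\sigma$ dual to $\rho$ is the ray $\QQ_{\geq 0}\rho$, the disjointness $\rho\cap\deg\DD=\emptyset$ should force $h_{\deg\DD}$ to be strictly positive somewhere on the $\rho$-facet; the delicate point, which I expect to be the main obstacle, is to arrange this witness to lie in $\omega^\vee$ rather than merely in $\sigma^\vee$, which requires a separation argument exploiting $\rho\in\sigma(1)\cap\omega(1)$. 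Once such a $u$ is found, $(\rho,0)$ lies on a face of $\tomega$ missing $\trho$, the root inequality yields $\langle e,\rho\rangle\geq0$, and the proof is complete.
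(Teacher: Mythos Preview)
The paper does not actually prove this lemma: it records it as a consequence of Theorem~\ref{lnd-hor} and refers to \cite[Remark~3.27]{Lie10}, so there is no argument in the paper to compare your proposal against.  That said, your strategy is the natural one and is essentially correct; the only issue is that you overestimate the difficulty of the last step.

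Your reduction is right: since $\sigma\subseteq\omega$ one has $\omega^\vee\subseteq\sigma^\vee$, and to get $e\in\omega^\vee$ it suffices to check $\langle e,\,\cdot\,\rangle\geq 0$ on the vertex directions $V-v_{\deg}$ (handled by Definition~\ref{col-div}\,(2)--(3)) and on the recession rays $\rho\in\sigma(1)\cap\omega(1)$.  For $C=\AF^1$ your observation that the last-coordinate functional exhibits $(\omega,0)$ as a face of $\tomega$ disjoint from $\trho$ finishes the job.  For $C=\PP^1$ you correctly reduce the question to producing $u$ in the facet $\tau_\rho=\rho^\perp\cap\omega^\vee$ with $h_{\deg\DD}(u)>0$, using the sandwich $\deg\DD\subseteq Q+v_{z_0}\subseteq\deg\DD+\omega$.

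The point you flag as ``delicate'' is in fact immediate once you pick $u\in\relint(\tau_\rho)$.  Since $\deg\DD\subsetneq\sigma\subseteq\omega$ and $u\in\omega^\vee$, one has $h_{\deg\DD}(u)\geq 0$.  If $h_{\deg\DD}(u)=0$, then the face of $\deg\DD$ on which $u$ vanishes is nonempty and contained in $u^\perp\cap\omega$.  But for $u$ in the relative interior of the facet $\tau_\rho$, the cone--face duality gives $u^\perp\cap\omega=\QQ_{\geq 0}\rho$, so $\deg\DD\cap\rho\neq\emptyset$, contradicting the hypothesis.  Hence $h_{\deg\DD}(u)>0$, your functional $\psi=(u,c)$ exists for suitable $c$, and $(\rho,0)$ lies on a face of $\tomega$ missing $\trho$; the root condition then yields $\langle e,\rho\rangle\geq 0$.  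No separate separation argument in $\sigma^\vee$ is needed.
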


\section{Compatible $\SL$-actions on normal affine $\TT$-varieties}
\label{sec:main}

In this section we give a classification of compatible $\SL$-actions
on $\TT$-varieties in two cases: in the case where the $\TT$-action is
of complexity one; and in arbitrary complexity provided that the
general $\SL$-orbits are contained in the $\TT$-orbit closures.

\subsection{$\SL$-actions on affine varieties}

Let $\SL$ be the algebraic group of $2\times 2$ matrices of
determinant 1. Every algebraic subgroup of $\SL$ of positive dimension
is conjugate to one of the following subgroups:
$$T=\left\{
  \left(\begin{smallmatrix}
    t & 0 \\
    0 & t^{-1}
  \end{smallmatrix}\right)
  \mid t\in\KK^*\right\},\quad 
  U_{(s)}=\left\{
  \left(\begin{smallmatrix}
    \epsilon & \lambda \\
    0 & \epsilon^{-1}, 
  \end{smallmatrix}\right)
\mid \epsilon,\lambda\in\KK, \epsilon^s=1\right\},$$
$$N=T\cup \left(
  \begin{smallmatrix}
    0 & 1 \\
    -1 & 0
  \end{smallmatrix}\right)\cdot T,
\quad \mbox{and} \quad B=T\cdot U_{(1)}\,.$$

Here $T$ is a maximal torus, $N$ is the normalizer of a maximal torus,
$B$ is a Borel subgroup, and $U_{(s)}$ is a cyclic extension of a
maximal unipotent subgroup. We also define the following 
maximal unipotent subgroups:
$$U_+=U_{(1)}, \quad\mbox{and}\quad U_-=
\left(
  \begin{smallmatrix}
    0 & -1 \\
    1 & 0
  \end{smallmatrix}\right)\cdot U_{(1)}\cdot
\left(
  \begin{smallmatrix}
    0 & 1 \\
    -1 & 0
  \end{smallmatrix}\right)\,.
$$
As a group $\SL$ is generated by the unipotent subgroups $U_+$ and
$U_-$ isomorphic to $\GA$.

Let now $X=\spec A$ be an affine variety endowed with an
$\SL$-action. The two $U_\pm$-actions on $X$ are equivalent to two
LNDs $\partial_\pm$ on the algebra $A$, and the $T$-action on $X$ is
equivalent to a $\ZZ$-grading on $A$. Furthermore, this $\ZZ$-grading
on $A$ is also uniquely determined by its infinitesimal generator
i.e., by the semisimple derivation $\delta$ given by
$\delta(a)=\deg(a)\cdot a$ for every homogeneous $a\in A$.

The following well known proposition gives a criterion for the
existence of an $\SL$-action on an affine variety. In the lack of a
reference we provide a short proof, cf. {\cite[4.15]{FlZa05b}}.

\begin{proposition} \label{sl-actions} %
  A non-trivial $\SL$-action on an affine variety $X=\spec A$ is
  equivalent to a (not necessarily effective) $\ZZ$-grading on $A$
  with infinitesimal generator $\delta$ and a couple of homogeneous
  LNDs $(\partial_+,\partial_-)$ of degrees $\deg_\ZZ\partial_\pm=\pm
  2$, satisfying $[\partial_+,\partial_-]=\delta$.

  Furthermore, the $\ZZ$-grading is effective if and only if $\SL$
  acts effectively on $X$. If the $\ZZ$-grading is not effective,
  then the kernel of $\SL\rightarrow \Aut(X)$ is $\{\pm
  \operatorname{Id}\}$ and so $\PSL$ acts effectively on $X$.
\end{proposition}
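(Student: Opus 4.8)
The plan is to establish the equivalence by passing through the Lie algebra $\slt$ and its universal property, then transport everything to the algebra of functions. The essential point is that $\SL$ is generated by the two one-parameter unipotent subgroups $U_+$ and $U_-$, so an $\SL$-action is determined by the two $\GA$-actions they induce, together with the relations among them; these relations are exactly encoded by the structure of $\slt$.

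\medskip

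First I would recall the standard basis $\{h,\mathfrak{e}_+,\mathfrak{e}_-\}$ of $\slt=\operatorname{Lie}(\SL)$, where $\mathfrak{e}_\pm$ generate $\operatorname{Lie}(U_\pm)$ and $h$ generates $\operatorname{Lie}(T)$, with the well-known brackets $[\mathfrak{e}_+,\mathfrak{e}_-]=h$ and $[h,\mathfrak{e}_\pm]=\pm 2\mathfrak{e}_\pm$. Given a regular $\SL$-action on $X=\spec A$, differentiating the action gives an $\slt$-representation by derivations of $A$; I set $\partial_\pm$ to be the images of $\mathfrak{e}_\pm$ and $\delta$ the image of $h$. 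Since $U_\pm\cong\GA$, the derivations $\partial_\pm$ are the LNDs associated to these $\GA$-actions, and they are locally nilpotent because the $U_\pm$-action is algebraic and $A$ is a union of finite-dimensional $U_\pm$-submodules on which $\mathfrak{e}_\pm$ acts nilpotently. The derivation $\delta$ integrates to the $T$-action, hence defines the $\ZZ$-grading, and $\delta$ is its infinitesimal generator by definition. The bracket relations in $\slt$ are preserved by any Lie-algebra homomorphism, so automatically $[\partial_+,\partial_-]=\delta$ and $[\delta,\partial_\pm]=\pm 2\partial_\pm$; the latter says precisely that $\partial_\pm$ is homogeneous of degree $\pm 2$ with respect to the $\ZZ$-grading, since $[\delta,\partial_\pm]=\pm 2\partial_\pm$ means $\partial_\pm$ raises degree by exactly $\pm 2$.

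\medskip

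Conversely, given the data $(\delta,\partial_+,\partial_-)$ satisfying the stated relations, I would build the $\SL$-action. The relations $[\partial_+,\partial_-]=\delta$ and $[\delta,\partial_\pm]=\pm 2\partial_\pm$ mean the $\KK$-span of $\{\delta,\partial_+,\partial_-\}$ inside $\Der(A)$ is a homomorphic image of $\slt$, so $A$ becomes an $\slt$-module by derivations. To integrate this to an $\SL$-action, the key is local finiteness: since $\partial_\pm$ are LNDs and $\delta$ is semisimple, one shows that every element of $A$ lies in a finite-dimensional $\slt$-stable subspace. The representation theory of $\slt$ then guarantees this finite-dimensional module integrates to a rational $\SL$-representation (the eigenvalues of $\delta$ are integers, so the representation of $\SL$ rather than merely $\mathfrak{sl}_2$ exponentiates). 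Assembling these compatible finite-dimensional $\SL$-representations gives a rational $\SL$-action on $A$ by algebra automorphisms, hence an algebraic $\SL$-action on $X$; one checks the $U_\pm$- and $T$-actions so obtained recover $\exp(t\partial_\pm)$ and the grading.

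\medskip

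For the effectivity statement, I would analyze the kernel of $\SL\to\Aut(X)$. This kernel is a normal subgroup of $\SL$, and the only proper normal subgroups are $\{\operatorname{Id}\}$ and the center $\{\pm\operatorname{Id}\}$. The action is non-effective precisely when $-\operatorname{Id}$ acts trivially; since $-\operatorname{Id}$ acts on the degree-$k$ graded piece of $A$ (the $\delta$-eigenspace for eigenvalue $k$) by $(-1)^k$, triviality of $-\operatorname{Id}$ is equivalent to $A$ being concentrated in even degrees, i.e. to the $\ZZ$-grading being non-effective. In that case the kernel is exactly $\{\pm\operatorname{Id}\}$ and $\PSL=\SL/\{\pm\operatorname{Id}\}$ acts effectively. \textbf{The main obstacle} I anticipate is the integration step in the converse direction: verifying that the abstract $\slt$-module structure actually integrates to a genuine algebraic $\SL$-action, which requires the local finiteness argument together with the fact that the $\delta$-grading being by integers is exactly what lifts the $\mathfrak{sl}_2$-action to the group $\SL$. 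The forward direction and the effectivity count are comparatively routine once the $\slt$-dictionary is set up.
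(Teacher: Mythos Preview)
Your proposal is correct and follows essentially the same route as the paper: differentiate the $\SL$-action to obtain the $\slt$-triple $\{\delta,\partial_+,\partial_-\}$ for the forward direction, and for the converse observe that the span is an $\slt$-copy in $\Der(A)$, use local finiteness to sit every element of $A$ in a finite-dimensional $\slt$-submodule, integrate each such module to an $\SL$-module (using that $\SL$ is simply connected), and then deduce that $\SL$ acts by algebra automorphisms because the generating subgroups $U_\pm$ do. The paper's proof actually omits the effectivity argument entirely, so your analysis via the normal subgroups of $\SL$ and the action of $-\operatorname{Id}$ on graded pieces is a welcome addition; note incidentally that your remark about integer eigenvalues of $\delta$ is automatic from $\slt$-theory and not an extra hypothesis to check.
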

\begin{proof}
  Assume first that $\SL$ acts non-trivially on $X$. Let
  $\{h,e_+,e_-\}$ be the $\slt$-triple in the Lie algebra
  $\slt$. Since $e_\pm$ is tangent to the 1-parameter unipotent
  subgroup $U_\pm$ in $\SL$, it acts on $A$ as an LND
  $\partial_\pm$. The vector $h$ is tangent to the torus $T$, thus $h$
  acts on $A$ as the infinitesimal generator $\delta$ of a
  $\ZZ$-grading. Since $[h,e_\pm]=\pm 2e_\pm$, the LND $\partial_\pm$
  is homogeneous of degree $\pm 2$, and the relation $[e_+,e_-]=h$
  implies $[\partial_+,\partial_-]=\delta$.

  Conversely, assume that we have $\delta,\partial_+,\partial_-$ as in
  the proposition. Then $\mfs=\langle
  \delta,\partial_+,\partial_-\rangle$ is a Lie subalgebra in
  $\Der(A)$ isomorphic to $\slt$. Furthermore, every element of $A$ is
  contained in a finite-dimensional $\mfs$-submodule. Recall that any
  finite-dimensional $\slt$-module has a canonical structure of
  $\SL$-module whose tangent representation coincides with the given
  one. This gives $A$ the structure of a rational $\SL$-module. Since
  $\SL$ is generated as a group by the subgroups $U_\pm$ and the LNDs
  $\partial_\pm$ define the action of $U_\pm$ via automorphisms, the
  group $\SL$ acts on $A$ via automorphisms. This proves that $A$ is a
  rational $\SL$-algebra, or, equivalently, $\SL$ acts regularly on
  $X$.
\end{proof}

We restrict now to the case of affine $\TT$-varieties. The following
definition determines the class of $\SL$-actions that we study in the
sequel.

\begin{definition}
  An $\SL$-action on a $\TT$-variety $X$ is \emph{compatible} if the
  image of $\SL$ in $\Aut(X)$ is normalized but not centralized by the
  torus $\TT$.
\end{definition}

Assume now that $X$ is a $\TT$-variety endowed with a compatible
$\SL$-action. Denote by $\widetilde{\SL}$ the image of $\SL$ in
$\Aut(X)$. There is a homomorphism $\psi:\TT\rightarrow
\Aut(\widetilde{\SL})$. Since any automorphism from
$\Aut(\widetilde{\SL})$ is inner, we have $\Aut(\widetilde{\SL})\simeq
\PSL$. Thus the image of $\TT$ is either trivial or is a maximal torus
$T\subseteq\PSL$. In the first case $\TT$ centralizes
$\widetilde{\SL}$ so this case is excluded by the definition of a
compatible $\SL$-action. Hence $\TT$ contains $T$ and so $\TT=T\cdot
S$, where $S=\ker\psi$ is a complementary subtorus that centralizes
the $\SL$-action. Let $U_\pm$ be unipotent root subgroups in
$\widetilde{\SL}$ with respect to the torus $T$. Then the $\SL$-action
on $X$ is defined by the infinitesimal generator corresponding to a
$\ZZ$-grading on $\KK[X]$ defined by $T$ (this is a downgrading of the
$M$-grading) and two $M$-homogeneous LNDs $\partial_\pm$ corresponding
to the $U_\pm$-actions. This gives the following corollary.

\begin{corollary}\label{cor-downg}
  \begin{enumerate}[$(i)$] 
  \item Let $X$ be a normal affine $\TT$-variety endowed with a
    compatible $\SL$-action. In this case, in
    Proposition~\ref{sl-actions}, we may assume that $\delta$ is the
    infinitesimal generator corresponding to a downgrading of $M$ and
    that $\partial_\pm$ are $M$-homogeneous LNDs. Furthermore,
    $\TT=T\cdot S$, where $T$ is the maximal torus in $\SL$ and $S$ is
    a complementary subtorus that centralizes the $\SL$-action.

  \item Let $X$ be a normal affine $\TT$-variety endowed with an
    $\SL$-action that is centralized by $\TT$. Then we may extend
    $\TT$ by $T$ so that the $\SL$-action is compatible with this
    bigger torus action.
  \end{enumerate}
\end{corollary}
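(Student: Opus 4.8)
The plan is to read off both statements from the conjugation action of $\TT$ on the image $\widetilde{\SL}$ of $\SL$ in $\Aut(X)$, exactly as in the discussion preceding the corollary, and to make the passage between the $\slt$-triple of Proposition~\ref{sl-actions} and the $M$-grading explicit. First I would record that $\widetilde{\SL}$ is isomorphic to $\SL$ or to $\PSL$, so that every automorphism of $\widetilde{\SL}$ is inner and $\Aut(\widetilde{\SL})\cong\PSL$. The hypothesis that $\TT$ normalizes $\widetilde{\SL}$ then yields a homomorphism $\psi\colon\TT\to\Aut(\widetilde{\SL})\cong\PSL$ whose image, being the image of a torus, is either trivial or a maximal torus $\overline{T}\subseteq\PSL$; compatibility rules out the trivial case, so $\psi(\TT)=\overline{T}$ and $S:=\ker\psi=C_\TT(\widetilde{\SL})$ is a subtorus centralizing the $\SL$-action.

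For part $(i)$ the key step is to transport this to the level of derivations. Conjugation makes $\TT$ act linearly on the Lie subalgebra $\langle\delta,\partial_+,\partial_-\rangle\subseteq\Der(\KK[X])$ through $\psi$, that is, through the adjoint action of $\overline{T}$ on $\slt$. Hence $\partial_+$ and $\partial_-$ are $\TT$-eigenvectors with opposite nonzero weights $\pm e\in M$, which is precisely the assertion that they are $M$-homogeneous LNDs of degrees $\pm e$; and $\delta=[\partial_+,\partial_-]$, being fixed by $\overline{T}$, is $M$-homogeneous of degree $0$. It remains to identify the maximal torus $T=\exp(\KK\delta)$ of $\widetilde{\SL}$ with a subtorus of $\TT$, so that the $\ZZ$-grading defined by $\delta$ is a downgrading of the $M$-grading. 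Since $\delta$ has $M$-degree $0$, the torus $T$ commutes with $\TT$, so $T$ and $\TT$ generate a single torus $R$; as $T$ and a suitable one-parameter subgroup $\nu\subseteq\TT$ with $\psi(\nu)=\overline{T}$ induce the same conjugation on $\widetilde{\SL}$, they differ by an element of $C_{\Aut(X)}(\widetilde{\SL})$, and this lets me realize $T$ inside $\TT$ with complement $S$, giving $\TT=T\cdot S$ with $T\cap S$ finite. This identification of $\delta$ with a one-parameter subgroup of $\TT$ is the main obstacle: being $M$-homogeneous of degree $0$ only forces $\delta$ to preserve each $A_m$, whereas a genuine downgrading requires $\delta$ to act on $A_m$ by the scalar determined by $m$, and it is the commutator structure $\delta=[\partial_+,\partial_-]$ together with the relation $[\delta,\partial_\pm]=\pm2\partial_\pm$ that I would use to pin this down.

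For part $(ii)$, suppose instead that $\TT$ centralizes $\widetilde{\SL}$. I would pick a maximal torus $T\subseteq\widetilde{\SL}$; since $\TT$ centralizes $\widetilde{\SL}\supseteq T$, the tori $T$ and $\TT$ commute and generate a torus $R=T\cdot\TT$ acting on $X$, whose character lattice is an extension of $M$ by $\ZZ$. Conjugation by $T$ realizes the maximal torus $\overline{T}$ of $\Aut(\widetilde{\SL})$, so $R$ normalizes but does not centralize $\widetilde{\SL}$; hence the $\SL$-action is compatible with the enlarged torus $R$, as claimed. After replacing $R$ by its effective quotient if necessary, part $(i)$ then applies to $(X,R)$, which is the form in which the corollary will be used in the sequel.
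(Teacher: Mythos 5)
Your group-theoretic skeleton coincides with the paper's: the paper also introduces $\widetilde{\SL}$, the homomorphism $\psi\colon\TT\to\Aut(\widetilde{\SL})\simeq\PSL$, observes that the image of $\TT$ is trivial or a maximal torus, excludes the trivial case by compatibility, and sets $S=\ker\psi$. Your part $(ii)$ also matches the intended argument: $T$ commutes with $\TT$ because $\TT$ centralizes $\widetilde{\SL}\supseteq T$, the product $T\cdot\TT$ is a torus (the intersection $T\cap\TT$ lies in the finite center of $\widetilde{\SL}$), and it normalizes but does not centralize $\widetilde{\SL}$. Likewise, your eigenvector argument that $\partial_\pm$ are $M$-homogeneous of opposite degrees, once the $\slt$-triple is aligned with $\psi(\TT)$, is correct and is exactly what the paper does.

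The genuine gap is precisely where you locate it, and neither your repair nor your announced fallback closes it. From the fact that $T$ and a one-parameter subgroup $\nu\subseteq\TT$ with $\psi(\nu)=\psi(\TT)$ induce the same conjugation on $\widetilde{\SL}$, you only get $T\subseteq\TT\cdot C_{\Aut(X)}(\widetilde{\SL})$; the centralizing discrepancy need not lie in $\TT$, so this does not ``realize $T$ inside $\TT$.'' Moreover, the deferred argument you propose---deducing scalarity of $\delta$ on each $A_m$ from $\delta=[\partial_+,\partial_-]$ and $[\delta,\partial_\pm]=\pm2\partial_\pm$---cannot succeed, because these relations together with $M$-homogeneity of $\partial_\pm$ only force $\deg_M\delta=0$, i.e.\ $\delta(A_m)\subseteq A_m$. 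A test case: let $X=\SL$ with $\SL$ acting by left multiplication and $\TT$ the one-dimensional torus acting by conjugation $x\mapsto txt^{-1}$ (effective after dividing by $\pm\operatorname{Id}$). This action is compatible in the sense of the paper, the LNDs $\partial_\pm$ are $M$-homogeneous of opposite degrees, and the full $\slt$-relations hold; yet $\delta$ acts on the weight-zero coordinates $a,d$ of $x=\bigl(\begin{smallmatrix} a & b\\ c& d\end{smallmatrix}\bigr)$ with eigenvalues $-1$ and $+1$, so $\delta$ is not a downgrading, and the left maximal torus $T$ is not contained in $\TT$. What is true is $T\subseteq\TT\cdot C_{\Aut(X)}(\widetilde{\SL})$, so that $T\cdot S$ is a torus of the same dimension as $\TT$ for which the conclusion does hold; that is, the ``we may assume'' involves replacing $\TT$ by $T\cdot S$, which changes the $\TT$-variety structure. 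The paper itself dispatches this point with the single assertion ``Hence $\TT$ contains $T$,'' so your instinct that this is the main obstacle is sound---but your proposal leaves the key assertion unproved, and the route you sketch for proving it is blocked by the example above.
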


The following is a generalization of a definition in \cite{Lie10b}.

\begin{definition}
  We say that a compatible $\SL$-action on a $\TT$-variety is of
  \emph{fiber type} if the general orbits are contained in the
  $\TT$-orbit closures and of \emph{horizontal type} otherwise.
\end{definition}

Clearly, a compatible $\SL$-action is of fiber type if and only if
both derivations $\partial_\pm$ are of fiber type. The following lemma
shows that a compatible $\SL$-action is of horizontal type if and only
if both derivations $\partial_\pm$ are of horizontal type.

\begin{lemma}\label{lm-unmix}
  Consider a compatible $\SL$-action on a $\TT$-variety $X$ and assume
  that the LND $\partial_+$ is of fiber type. Then the $\SL$-action is
  of fiber type.
\end{lemma}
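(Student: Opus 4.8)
The plan is to deduce the fiber type of $\partial_-$ from that of $\partial_+$ by conjugating with the Weyl group element of $\SL$, and then to invoke the remark preceding the lemma. Recall from Corollary~\ref{cor-downg}$(i)$ that for a compatible $\SL$-action one has $\TT=T\cdot S$, where $T$ is the maximal torus of the image $\widetilde{\SL}\subseteq\Aut(X)$ and $S$ is a complementary subtorus centralizing $\widetilde{\SL}$; moreover $\delta$ is the infinitesimal generator of the downgrading given by $T$ and $\partial_\pm$ are the homogeneous LNDs of the root subgroups $U_\pm$. Under the Altmann--Hausen correspondence one has $\KK(Y)=\fract(A)^{\TT}$, the field of $\TT$-invariant rational functions, so that ``$\partial$ is of fiber type'' means precisely $\partial(\KK(Y))=0$.

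Let $w\in\widetilde{\SL}\subseteq\Aut(X)$ be the image of $\bigl(\begin{smallmatrix}0&1\\-1&0\end{smallmatrix}\bigr)$, a representative of the nontrivial element of the Weyl group of $T$. First I would record two properties of $w$. On the one hand $w\,U_+\,w^{-1}=U_-$, so conjugation by $w$ sends the LND $\partial_+$ to a nonzero scalar multiple of $\partial_-$; denote by $\phi\in\Aut(X)$ the induced automorphism of $A$, so that $\phi\circ\partial_+\circ\phi^{-1}=c\,\partial_-$ with $c\in\KK^*$. On the other hand $w$ normalizes $\TT$: indeed $w$ normalizes $T$ since $w$ lies in the normalizer $N$, and $w$ commutes with $S$ since $S$ centralizes all of $\widetilde{\SL}$; hence for $t\in T$ and $s\in S$ we get $w(ts)w^{-1}=(wtw^{-1})s\in T\cdot S=\TT$. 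Consequently $\phi$ normalizes the $\TT$-action, and therefore preserves the field of $\TT$-invariants: for $f\in\KK(Y)$ and any $\tau\in\TT$ one has $\tau\cdot\phi(f)=\phi\bigl((\phi^{-1}\tau\phi)\cdot f\bigr)=\phi(f)$, so $\phi(f)\in\KK(Y)$, and likewise $\phi^{-1}(\KK(Y))\subseteq\KK(Y)$.

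Combining these two facts finishes the argument: if $\partial_+$ is of fiber type then for every $f\in\KK(Y)$ we have $\phi^{-1}(f)\in\KK(Y)$, whence $\partial_-(f)=c^{-1}\phi\bigl(\partial_+(\phi^{-1}(f))\bigr)=c^{-1}\phi(0)=0$. Thus $\partial_-(\KK(Y))=0$, i.e.\ $\partial_-$ is of fiber type as well, and since a compatible $\SL$-action is of fiber type if and only if both $\partial_\pm$ are, the $\SL$-action is of fiber type. The main obstacle is to understand why the naive infinitesimal argument does not suffice: for $f\in\KK(Y)$ the element $g=\partial_-(f)$ satisfies $\partial_+(g)=\delta(f)=0$ and $\delta(g)=-2g$, so $g$ looks like a $U_+$-highest weight vector of negative weight, which would vanish in a locally finite $\slt$-module; however $\fract(A)$ is not locally finite as an $\slt$-module (for instance $\partial_-^n(1/a)\neq 0$ for all $n$ when $a$ is a highest weight vector), so one cannot conclude $g=0$ this way. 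The point of the proof is precisely that the global group element $w$, and not merely the Lie algebra, is available, and the crux is the verification that $w$ normalizes the whole torus $\TT$ rather than only $T$ --- which is exactly where the splitting $\TT=T\cdot S$ with $S$ centralizing $\widetilde{\SL}$ from Corollary~\ref{cor-downg} enters.
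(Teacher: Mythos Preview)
Your proof is correct and takes a genuinely different route from the paper's.

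The paper argues geometrically: it sets $B=T\cdot U_+$, observes that the $B$-action is of fiber type, and then splits into two cases according to whether the general $\SL$-orbit has dimension $2$ or $3$. In the first case $\overline{B\cdot x}=\overline{\SL\cdot x}$ immediately; in the second case the authors derive a contradiction by constructing a one-dimensional subtorus $S_x\subseteq\TT$ commuting with $\SL$, lifting it to a right-multiplication torus on $\SL$, and checking that $B\times\widetilde{S}$ has an open orbit on $\SL$.

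Your argument is more algebraic and more direct: you exploit the Weyl involution $w$ to carry $\partial_+$ to $\partial_-$ and observe that $w$ normalizes all of $\TT$ (not just $T$), hence preserves $\KK(Y)=\fract(A)^{\TT}$. This avoids any case analysis and any orbit-dimension reasoning; the only structural input you use is precisely the splitting $\TT=T\cdot S$ from Corollary~\ref{cor-downg}, which is exactly what makes $w$ normalize $\TT$. Your approach also makes transparent \emph{why} the two LNDs must have the same type, namely because they are exchanged by an element of $\Aut(X)$ compatible with the $\TT$-grading. The aside about why the infinitesimal $\slt$-argument fails on $\fract(A)$ is a nice diagnostic, though not needed for the proof itself.
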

\begin{proof}
  Set $B=T\cdot U_+\subseteq \SL$. Then the $B$-action on $X$ is of
  fiber type i.e., the general $B$-orbits are contained in the orbit
  closures of the $\TT$-action. We consider two cases.

  \smallskip\noindent\textbf{Case 1:} The general $\SL$-orbits on $X$
  are 2-dimensional. Then for general $x\in X$ one has
  $\overline{B\cdot x}=\overline{\SL\cdot x}$, and the $\SL$-action is
  of fiber type.

  \smallskip\noindent\textbf{Case 2:} The general $\SL$-orbits on $X$
  are 3-dimensional. Consider a general point $x\in X$ and the
  stabilizer $\TT^2_x\subseteq \TT$ of the subvariety
  $\overline{B\cdot x}$. Since any automorphism of the group $B$ is
  inner and the torus $\TT$ normalizes $B$, we may find a
  1-dimensional subtorus $S_x\subseteq \TT^2_x$ which commutes with
  the $B$-action on $\overline{B\cdot x}$. But the image of the
  homomorphism $\psi:\TT\rightarrow \Aut(\widetilde{\SL})\simeq \PSL$
  is a maximal torus, and so the subtorus $S_x$ is in the kernel of
  $\psi$. Hence, $S_x$ commutes with the $\SL$-action. In particular,
  $S_x$ preserves $B\cdot x$ and $\SL\cdot x$, and its action on
  $\SL\cdot x$ may be lifted to the action of a maximal torus
  $\widetilde{S}\subseteq \SL$ by right multiplication on $\SL$ via a
  finite covering $\widetilde{S}\rightarrow S_x$. But it is easy to
  check that the $(B\times \widetilde{S})$-action on $\SL$ has an open
  orbit, so $S_x$ permutes the general $B$-orbits on $\SL\cdot
  x$. This provides a contradiction.
\end{proof}

\subsection{$\SL$-actions on toric varieties}

In this section we give a complete classification of compatible
$\SL$-actions on affine toric varieties. Since a toric variety has an
open $\TT$-orbit, every $\SL$-action on a toric variety is of fiber
type.

\begin{definition} \label{def:sl-root}
  Let $\sigma\subseteq N_\QQ$ be a polyhedral cone. A root $e\in
  \RR(\sigma)$ is called an \emph{$\SL$-root} if also
  $-e\in\RR(\sigma)$.
\end{definition}

If $e$ is an $\SL$-root, then $\langle e,\rho_e\rangle=-1$, $\langle
e,\rho_{-e}\rangle=1$, and $\langle e,\rho\rangle=0\ \forall
\rho\in\sigma(1)\setminus \{\rho_{\pm e}\}$. Thus the number of
$\SL$-roots of a cone $\sigma$ with $r$ rays does not exceed $r(r-1)$,
and this bound is attained for a regular cone of dimension $r$.

\begin{theorem} \label{sl-toric} %
  The compatible $\SL$-actions on an affine toric variety $X_\sigma$
  are in bijection with the $\SL$-roots of $\sigma$. Furthermore, for
  every $\SL$-root $e\in \RR(\sigma)$, the corresponding $\SL$-action
  is effective if and only if the lattice vector $\rho_{-e}-\rho_{e}$
  is primitive.  If $\rho_{-e}-\rho_{e}$ is not primitive, then
  $\tfrac{1}{2}(\rho_{-e}-\rho_{e})$ is primitive and $\PSL$ acts
  effectively on $X_\sigma$.
\end{theorem}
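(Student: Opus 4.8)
The plan is to reduce everything to the explicit toric formula $\partial_{e}(\chi^m)=\langle m,\rho_{e}\rangle\,\chi^{m+e}$ and then read off the constraints imposed by the $\slt$-relations. First I would note that, $X_\sigma$ being toric, any compatible $\SL$-action is of fiber type, so by Proposition~\ref{sl-actions} and Corollary~\ref{cor-downg} it is given by two nonzero $M$-homogeneous LNDs $\partial_\pm$ together with the semisimple $\delta=[\partial_+,\partial_-]$ generating a downgrading $\delta(\chi^m)=\langle m,p\rangle\chi^m$ with $p\in N$. By Theorem~\ref{toric} each derivation has the form $\partial_\pm=\lambda_\pm\partial_{e_\pm}$ with $\lambda_\pm\in\KK^*$ and $e_\pm\in\RR(\sigma)$.

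The computational core is to evaluate the commutator. Two applications of the formula above give
$$[\partial_+,\partial_-](\chi^m)=\lambda_+\lambda_-\big(\langle m,\rho_{e_-}\rangle\langle m+e_-,\rho_{e_+}\rangle-\langle m,\rho_{e_+}\rangle\langle m+e_+,\rho_{e_-}\rangle\big)\chi^{m+e_++e_-}.$$
Since $\delta$ must preserve the $M$-degree, comparing exponents forces $e_++e_-=0$; setting $e:=e_+$, this is exactly the statement that $e$ is an $\SL$-root with $e_-=-e$. Feeding the relations $\langle e,\rho_e\rangle=-1$, $\langle e,\rho_{-e}\rangle=1$ and $\langle e,\rho\rangle=0$ on the remaining rays into the bracket makes it telescope to $\lambda_+\lambda_-\langle m,\rho_{-e}-\rho_e\rangle\chi^m$, so $p=\lambda_+\lambda_-(\rho_{-e}-\rho_e)$. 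Imposing the normalization $\deg_{\ZZ}\partial_+=\langle e,p\rangle=2$ then forces $\lambda_+\lambda_-=1$ and hence $p=\rho_{-e}-\rho_e$.

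For the reverse direction and the bijection I would reverse this computation: given an $\SL$-root $e$, put $\partial_\pm=\lambda_\pm\partial_{\pm e}$ with $\lambda_+\lambda_-=1$, set $\delta=[\partial_+,\partial_-]$, and verify the remaining relation $[\delta,\partial_\pm]=\pm2\partial_\pm$ by the same telescoping; Proposition~\ref{sl-actions} then yields a genuine $\SL$-action, which is normalized by $\TT$ (the $\partial_\pm$ are $M$-homogeneous) and not centralized by $\TT$ (which contains the image of the maximal torus of $\widetilde{\SL}$, acting nontrivially on $\partial_\pm$ by conjugation). The residual scaling freedom $(\lambda_+,\lambda_-)\mapsto(t^2\lambda_+,t^{-2}\lambda_-)$ is conjugation by an element of that torus, so the action is determined by $e$ up to this equivalence, giving the claimed bijection.

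The last step is effectiveness. By Proposition~\ref{sl-actions} the $\SL$-action is effective iff the $\ZZ$-grading $\delta$ is, and the degrees $\langle m,p\rangle$ (for $m\in\sigma^\vee_M$, which generates $M$) generate the subgroup $d\ZZ\subseteq\ZZ$, where $d$ is the largest integer with $p=\rho_{-e}-\rho_e\in dN$; thus effectiveness is equivalent to $p$ being primitive. If $p$ is not primitive, the order-$d$ subgroup $\mu_d\subseteq T$ acting trivially lies in the kernel of the $\SL$-action, which by Proposition~\ref{sl-actions} equals the center $\{\pm\operatorname{Id}\}=\mu_2$; hence $d=2$, so $\tfrac{1}{2}(\rho_{-e}-\rho_e)$ is primitive and $\PSL$ acts effectively. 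I expect the main obstacle to be the commutator calculation together with the bookkeeping that pins down the normalization $\lambda_+\lambda_-=1$ and the grading vector $p$; granting Proposition~\ref{sl-actions}, the effectiveness dichotomy then follows formally from the fact that the only proper nontrivial normal subgroup of $\SL$ is its center.
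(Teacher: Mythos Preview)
Your proof is correct and follows essentially the same approach as the paper's: both directions hinge on writing $\partial_\pm=\lambda_\pm\partial_{e_\pm}$ via Theorem~\ref{toric}, forcing $e_+=-e_-$ from the degree of the commutator, and then reading off $p=\rho_{-e}-\rho_e$ and $\lambda_+\lambda_-=1$ from the explicit bracket formula together with $\langle e,p\rangle=2$. You give a bit more detail than the paper on the bijection (handling the residual scaling by torus conjugation) and on the effectiveness dichotomy, but the argument is the same.
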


\begin{proof}
  Let $A=\KK[\sigma^\vee_M]$ and $e\in \RR(\sigma)$ be an
  $\SL$-root. Letting $p=\rho_{-e}-\rho_e$, we define a $\ZZ$-grading
  on $A$ via
  $$\deg_\ZZ\chi^m=\langle m,p\rangle\in\ZZ,\quad\mbox{for all}
  \quad m\in \sigma_M^\vee\,.$$ %
  Hence, the infinitesimal generator of the corresponding $\GM$-action
  is given by
  $$\delta(\chi^m)=\langle m,p\rangle \chi^m,\quad\mbox{for all}
  \quad m\in \sigma_M^\vee\,.$$

  A routine computation shows that $\delta$, $\partial_e$ and
  $\partial_{-e}$ satisfy the conditions of
  Proposition~\ref{sl-actions}. Furthermore, since $\langle e,p\rangle=2$
  then $p$ is primitive or $\nicefrac{p}{2}$ is primitive. This proves
  the ``only if'' part of the theorem.

  To prove the converse, let $\delta,\partial_-,\partial_+$ be three
  homogeneous derivations satisfying the conditions of
  Proposition~\ref{sl-actions}. Since $\partial_\pm$ are LNDs, then
  $\partial_\pm=\lambda_\pm\partial_{e_\pm}$ for some $\lambda_\pm\in
  \KK^*$ and some roots $e_\pm\in \RR(\sigma)$. Furthermore, since the
  derivation $\delta$ comes from a downgrading of the $M$-grading on
  $A$, there is a lattice element $p$ such that
  $$\delta(\chi^m)=\langle m,p\rangle \chi^m,\quad\mbox{for all}
  \quad m\in \sigma_M^\vee\,.$$

  Since the commutator $[\partial_+,\partial_-]$ is a homogeneous
  operator of degree $e_++e_-$, we have $e:=e_+=-e_-$. One checks that
  the commutator is given by
  $$[\partial_+,\partial_-](\chi^m)=\lambda_+\lambda_-\langle m,
  \rho_{-e}-\rho_{e}\rangle \chi^m,\quad\mbox{for all} \quad m\in
  \sigma_M^\vee\,.$$ %
  Hence $p=\rho_{-e}-\rho_e$, $\lambda_+=\lambda_-^{-1}$, and the
  result follows.
\end{proof}

\begin{remark}
  If $e$ is an $\SL$-root of $\sigma$, then $-e$ is also an
  $\SL$-root. The corresponding $\SL$-actions are conjugate.
\end{remark}

\begin{example}\label{ex:toric-2}
  Let $X_\sigma$ be an affine toric variety of dimension 2. Up to
  automorphism of the lattice $N$ we may assume that $\sigma\subseteq
  N_\QQ$ is the cone spanned by the vectors $\rho_1=(1,0)$ and
  $\rho_2=(a,b)$, where $a\geq 0$, $a<b$, and $\gcd(a,b)=1$.  By
  Theorem~\ref{sl-toric} $X_\sigma$ admits a compatible $\SL$-action
  if and only if there exists $e\in M$ such that $\langle
  e,\rho_1\rangle=1$ and $\langle e,\rho_2\rangle=-1$. The only
  solution is $e=(1,-1)$ and $b=a+1$. Furthermore, the action is
  effective if and only if $b$ is odd.

  It is well known that the toric variety $X_\sigma$ corresponds to
  the affine cone over the rational normal curve $C$ of degree $a+1$
  (also known as Veronese cone). The curve $C$ is the image of $\PP^1$
  under the morphism
  $$\psi:\PP^1\hookrightarrow \PP^{a+1},\quad
  [x:y]\mapsto
  \left[x^{a+1}:x^{a}y:x^{a-1}y^2:\ldots:y^{a+1}\right]\,.$$ %
  The $\SL$-action on $X_\sigma$ is induced by the canonical
  $\SL$-action on the simple $\SL$-module $V(a+1)$ of binary forms of
  degree $a+1$.
\end{example}

\begin{example}
  Let now $X_\sigma$ be an affine toric variety of dimension
  3. Letting $e$ be an $\SL$-root of $\sigma$, we let $\rho_e$ and
  $\rho_{-e}$ be the corresponding distinguished rays and we consider
  a ray $\rho\neq \rho_{\pm e}$. Since $\langle e,\rho\rangle=0$ it
  follows that there are at most 2 non-distinguished rays. Thus the
  cone $\sigma$ has at most 4 rays.

  Assume first that $\sigma$ is simplicial and set $e=(1,0,0)$. Then,
  up to automorphism of the lattice $N$, the cone $\sigma$ is
  spanned by the vectors $\rho_1=(1,0,0)$, $\rho_2=(0,1,0)$, and
  $\rho_3=(-1,b,a)$, where $a>0$ and $0\leq b<a$.

  Let now $\sigma$ be a non simplicial cone and set again
  $e=(1,0,0)$. Then, up to automorphism of the lattice $N$, the
  cone $\sigma$ is spanned by the vectors $\rho_1=(1,0,0)$,
  $\rho_2=(0,1,0)$, $\rho_3=(0,b,a)$, and $\rho_4=(-1,c,d)$, where
  $a>0$, $0\leq b<a$, $\gcd(a,b)=1$, $d>0$, and $ac>bd$.
\end{example}

\begin{remark}
  In dimension 4 or greater, a cone admitting an $\SL$-root can have
  an arbitrary number of rays.
\end{remark}

\subsection{$\SL$-actions of fiber type on
  $\TT$-varieties} \label{sec:T-var-fib}

In the following theorem we give a classification of $\SL$-actions of
fiber type on normal affine $\TT$-varieties of arbitrary complexity.

\begin{theorem}\label{sec:sl-fiber}
  Let $\DD$ be a proper $\sigma$-polyhedral divisor on a
  semiprojective variety $Y$. Then the compatible $\SL$-actions of
  fiber type on the affine $\TT$-variety $X=X[Y,\DD]$ are in bijection
  with the $\SL$-roots $e$ of $\sigma$ such that the divisor $\DD(e)$ is
  principal and $\DD(e)+\DD(-e)=0$.

  Furthermore, the corresponding $\SL$-action is effective if and only
  if the lattice vector $\rho_{-e}-\rho_{e}$ is primitive. If
  $\rho_{-e}-\rho_{e}$ is not primitive, then
  $\tfrac{1}{2}(\rho_{-e}-\rho_{e})$ is primitive and $\PSL$ acts
  effectively on $X$.
\end{theorem}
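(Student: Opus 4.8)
The plan is to mirror the proof of the toric case (Theorem~\ref{sl-toric}), but now using the classification of homogeneous LNDs of fiber type given in Theorem~\ref{lnd-fiber} in place of the purely toric Theorem~\ref{toric}. By Lemma~\ref{lm-unmix} and Corollary~\ref{cor-downg}, a compatible $\SL$-action of fiber type on $X=X[Y,\DD]$ is equivalent, via Proposition~\ref{sl-actions}, to a triple $(\delta,\partial_+,\partial_-)$ where $\delta$ is the infinitesimal generator of a downgrading of the $M$-grading, and $\partial_\pm$ are homogeneous LNDs of fiber type with $[\partial_+,\partial_-]=\delta$ and $\deg_\ZZ\partial_\pm=\pm 2$. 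So the whole proof reduces to computing the commutator of two such LNDs and reading off when it is semisimple.

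First I would establish the forward direction. Given an $\SL$-root $e$ with $\DD(e)$ principal and $\DD(e)+\DD(-e)=0$, write $\DD(e)=\divi(\varphi)$ with $\varphi\in\KK(Y)^*$; then $\varphi\in\Phi_e^*$ and $\varphi^{-1}\in\Phi_{-e}^*$ by the condition $\DD(-e)=-\DD(e)$. Set $\partial_+=\partial_{e,\varphi}$ and $\partial_-=\partial_{-e,\varphi^{-1}}$ as in Theorem~\ref{lnd-fiber}, and define $\delta$ by the downgrading $\deg_\ZZ\chi^m=\langle m,\rho_{-e}-\rho_e\rangle$ on the $M$-homogeneous pieces. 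A direct computation of $\partial_+\partial_-(f\chi^m)-\partial_-\partial_+(f\chi^m)$ should give
$$[\partial_+,\partial_-](f\chi^m)=\langle m,\rho_{-e}-\rho_e\rangle\cdot f\chi^m,$$
because the $\varphi$-factors cancel (this is exactly where $\DD(e)+\DD(-e)=0$ is needed, so that the shifted function lands back in the right graded piece) and the numerical coefficients reduce to the toric computation from Theorem~\ref{sl-toric}. This verifies the $\slt$-triple relations, hence gives the compatible $\SL$-action of fiber type; it is also where the degree-$\pm 2$ normalization forces $\langle e,\rho_{-e}-\rho_e\rangle=2$, yielding the primitivity/effectiveness dichotomy verbatim as in the toric case.

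For the converse, start from an arbitrary compatible $\SL$-action of fiber type and apply Theorem~\ref{lnd-fiber} to write $\partial_\pm=\partial_{e_\pm,\varphi_\pm}$ for roots $e_\pm\in\RR(\sigma)$ and $\varphi_\pm\in\Phi_{e_\pm}^*$. Since $[\partial_+,\partial_-]=\delta$ is homogeneous of degree $e_++e_-=0$, I get $e:=e_+=-e_-$, so $e$ is an $\SL$-root. Computing the commutator again, the degree-$0$ part forces $\varphi_+\varphi_-\in\KK^*$, and matching this against the product $\OO_Y(\DD(e))\cdot\OO_Y(\DD(-e))$ forces $\DD(e)+\DD(-e)=0$ and shows $\divi(\varphi_+)=\DD(e)$ is principal; the requirement $\deg_\ZZ\partial_+=2$ pins down $\delta$ to the stated downgrading. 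The main obstacle I anticipate is the bookkeeping in this commutator computation: unlike the toric case, the derivations now carry the rational-function factors $\varphi_\pm$ and act on $f\chi^m$ with $f\in\KK(Y)$, so I must check carefully that the noncommutativity contributes nothing beyond the scalar term — i.e. that $\varphi_\pm$ are genuinely constant along the fibers ($\partial$ is of fiber type, so it does not differentiate $f$) and that the support-function bookkeeping of $\DD(e)$ versus the evaluation divisors is consistent. Once this cancellation is made rigorous, the effectiveness statement is identical to the toric argument since it depends only on the numerical data $\langle e,\rho_{-e}-\rho_e\rangle=2$.
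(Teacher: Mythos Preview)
Your proposal is correct and follows essentially the same route as the paper's proof: construct $\partial_\pm=\partial_{\pm e,\varphi^{\pm 1}}$ from a principal $\DD(e)$ and verify the $\slt$-relations by direct commutator computation, then in the converse use Theorem~\ref{lnd-fiber} to write $\partial_\pm=\partial_{e_\pm,\varphi_\pm}$, deduce $e_+=-e_-$ from degree considerations, and read off $\varphi_+\varphi_-\in\KK^*$ from the commutator. One small point to tighten: the condition $\varphi_\pm\in\Phi_{\pm e}^*$ with $\varphi_+\varphi_-$ constant only yields $\DD(e)+\DD(-e)\geq 0$; the paper closes the argument by noting that $\DD(e)+\DD(-e)=\sum_Z\big(\min_i v_{i,Z}(e)-\max_i v_{i,Z}(e)\big)\cdot Z\leq 0$ directly from the definition of the evaluation, forcing equality (and incidentally the correct sign convention is $\divi(\varphi)+\DD(e)=0$, not $\DD(e)=\divi(\varphi)$).
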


\begin{proof}
  Letting $A=A[Y,\DD]=\KK[X]$, we let $e$ be an $\SL$-root of $\sigma$
  satisfying the conditions of the theorem. As in the toric case, we
  let $\rho_{\pm e}\in\sigma(1)$ be the distinguished rays of the
  roots $\pm e$, respectively. Letting $p=\rho_{-e}-\rho_e$, we define
  a $\ZZ$-grading on $A$ via
  $$\deg_\ZZ(A_m\cdot\chi^m)=\langle m,p\rangle\in\ZZ,\quad\mbox{for all}
  \quad m\in \sigma_M^\vee\,.$$ %
  So the infinitesimal generator of the corresponding $\GM$-action is
  given by
  $$\delta(f\chi^m)=\langle m,p\rangle\cdot f\chi^m,\quad\mbox{for all}
  \quad m\in \sigma_M^\vee\mbox{ and } f\in A_m\,.$$

  Letting $\varphi$ be any rational function on $Y$ such that
  $\divi(\varphi)+\DD(e)=0$, we let $\partial_+=\varphi\partial_e$ and
  $\partial_-=\varphi^{-1}\partial_{-e}$. The derivations
  $\partial_\pm$ are LNDs on $A$ by Theorem~\ref{lnd-fiber}.

  Now a routine computation shows that $\delta$, $\partial_+$ and
  $\partial_{-}$ satisfy the conditions of
  Proposition~\ref{sl-actions}. Furthermore, since $\langle e,p\rangle=2$
  then $p$ is primitive or $\nicefrac{p}{2}$ is primitive. This proves
  the ``only if'' part of the theorem.

  To prove the converse, let $\delta,\partial_-,\partial_+$ be three
  homogeneous derivations satisfying the conditions of
  Proposition~\ref{sl-actions}. Since $\partial_\pm$ are LNDs of fiber
  type, then $\partial_\pm=\partial_{e_\pm,\varphi_\pm}$ for some roots
  $e_\pm\in \RR(\sigma)$ and some $\varphi_\pm\in \Phi_{e_\pm}^*$.
  Similar to the toric case, we can prove that $e:=e_+=-e_-$.

  The commutator $[\partial_+,\partial_-]$ is given by
  $$[\partial_+,\partial_-](f\chi^m)=\varphi_+\varphi_-\langle m,
  p\rangle\cdot f\chi^m,\quad\mbox{for all} \quad m\in
  \sigma_M^\vee\mbox{ and } f\in A_m\,,$$ %
  where $p=\rho_{-e}-\rho_e$. Hence
  $\varphi_+=\varphi_-^{-1}$. Furthermore, since
  $\varphi_\pm\in\Phi_{e_\pm}^*$, we have
  \begin{equation}\label{eq-sl2-fib}
    \divi(\varphi_+)+\DD(e)\geq 0,\mbox{ and }\divi(\varphi_-)+\DD(-e)\geq
    0, \quad\mbox{so that}\quad \DD(e)+\DD(-e)\geq 0\,.
  \end{equation}

  Moreover,
  $$\DD(e)+\DD(-e)=
  \sum_Z\left(\min_i\{v_{i,Z}(e)\}-\max_i\{v_{i,Z}(e)\}\right)\cdot
  Z\leq 0\,.$$ %
  Hence $\DD(e)+\DD(-e)=0$. Finally, \eqref{eq-sl2-fib} yields
  $\divi(\varphi_+)+\DD(e)=0$ and so $\DD(e)$ is principal.
\end{proof}

\begin{remark}\label{rk:fiber}\
  \begin{enumerate}[$(1)$]
  \item By the proof of Theorem~\ref{sec:sl-fiber}, the condition
    $\DD(e)+\DD(-e)=0$ is fulfilled if and only if $v_{i,Z}(e)=v_{j,Z}(e)$
    for all prime divisors $Z\subseteq Y$ and all $i,j$.

  \item If $\rank M=2$ then the condition $\DD(e)+\DD(-e)=0$ in
    Theorem~\ref{sec:sl-fiber} can only be fulfilled if $\Delta_Z$ has
    only one vertex for all prime divisors $Z\subseteq Y$ i.e.,
    $\Delta_Z=v_Z+\sigma$. Indeed, the condition
    $v_{i,Z}(e)=v_{j,Z}(e)$ for all $i,j$ implies that all the
    vertices are contained in the line $L=\{v\in N_\QQ\mid \langle
    e,v-v_{1,Z}\rangle=0\}$. But $\pm e\notin \sigma^\vee$ and so
    $L\cap\sigma$ is a half line inside the cone $\sigma$. This
    implies that there can be only one vertex $v_Z:=v_{1,Z}$.
  \end{enumerate}
\end{remark}

\begin{example}
  Let $N=\ZZ^3$, $C=\AF^1$, and let $\sigma$ be the positive octant in
  $N_\QQ$. We also let $\Delta=\conv(v_1,v_2)+\sigma$, where
  $v_1=(1,1,-1)$ and $v_2=(-1,-1,1)$ and $\DD=\Delta\cdot[0]$. We
  consider the $\SL$-root $e=(-1,1,0)$ of $\sigma$. Since
  $v_1(e)=v_2(e)=0$ we have $\DD(e)+\DD(-e)=0$ and so by
  Theorem~\ref{sec:sl-fiber} the $\SL$-root $e$ produces an
  $\SL$-action on $X=X[C,\DD]$.

  The variety $X$ is toric by Corollary~\ref{AH-toric}. As a toric
  variety, $X$ is given by the non-simplicial cone $\tsigma\subseteq
  (N\oplus \ZZ)_\QQ$ spanned by $(v_1,1)$, $(v_2,1)$, $(\nu_1,0)$,
  $(\nu_2,0)$, and $(\nu_3,0)$, where $\{\nu_i\}$ is the standard base
  of $N$. The $\SL$-action is compatible with the big torus and is
  given by the $\SL$-root $\te=(e,0)$ of $\tsigma$.
\end{example}

\subsection{$\SL$-actions of horizontal type on
  $\TT$-varieties} \label{sec:T-var-hor}

In this section we give the more involved classification of
$\SL$-actions of horizontal type in the case of $\TT$-varieties of
complexity one. Here, we use the notation of Section~\ref{sec:LND-T}.

Letting $\DD$ be a proper $\sigma$-polyhedral divisor on the curve
$C=\AF^1$ or $C=\PP^1$, we let $X=X[C,\DD]$ and we assume that $X$
admits a compatible $\SL$-action of horizontal type. By
Proposition~\ref{sl-actions}, an $\SL$-action on $X$ is completely
determined by two homogeneous LNDs $\partial_\pm$ with
$\deg\partial_+=-\deg\partial_-=e$. By Theorem~\ref{lnd-hor}, the LNDs
of horizontal type are in bijection with coherent pairs. Let
$\partial_\pm$ be the LND given by the coherent pair $(\tDD_\pm,\pm
e)$, respectively, where
\begin{align} \label{eq:colorings}
  \tDD_\pm=\begin{cases}
    \{\DD;v_z^\pm,\forall z\in C\}\mbox{ with marked point }z_0^\pm & \mbox{if } C=\AF^1,  \\
    \{\DD,z_\infty^\pm;v_z^\pm,\forall z\in C\setminus
    z_\infty^\pm\}\mbox{ with marked point }z_0^\pm & \mbox{if } C=\PP^1\,.
  \end{cases}
\end{align}

Let $\AF^1=\spec\KK[t]$. In the sequel, we will assume $z_0^+=0$, and
$z_\infty^+=\infty$ if $C=\PP^1$. We also let $q(t)$ be a coordinate
around $z_0^-$ having point at infinity $z_\infty^-$ if $C=\PP^1$
i.e., $q$ is a M\"obious transformation
$$q(t)=\frac{at+b}{ct+d},\quad\mbox{with}\quad ad-bc=1,\quad q(z_0^-)=0
\quad\mbox{and}\quad q(z_\infty^-)=\infty \mbox{ if
}C=\PP^1\,.$$ %
In the case where $C=\AF^1$, we have $c=0$ and so we may choose
$a=d=1$ and $b=-z_0^-$.

By Corollary~\ref{cor:AH} we may and will assume $v_z^-=0$, for all
$z\in C\setminus\{z_0^-,z_\infty^-\}$. We also let $d^\pm$ be the smallest
positive integer such that $d^\pm\cdot v_{z_0^\pm}^\pm$ is contained
in the lattice $N$, and $s^\pm=-\tfrac{1}{d^\pm}\mp
v_{z_0^\pm}^\pm(e)$. In this setting \eqref{eq:1} yields
\begin{align}
  \label{eq:LND-}
  \partial_-(\chi^m\cdot q^r)=d^-(v_{z_0^-}^-(m)+r)\cdot\chi^{m-e}\cdot
  q^{r+s^-}, \quad\mbox{for all}\quad
  (m,r)\in M\oplus \ZZ\,.
\end{align}

To obtain a similar expression for $\partial_+$, we let
$$\DD'=\begin{cases}
  \DD-\sum_{z\neq z_0^+}(v_z^++\sigma)\cdot z & \mbox{if }
  C=\AF^1, \\
  \DD-\sum_{z\neq z_0^+,z_\infty^+}(v_z^++\sigma)\cdot (z-z_\infty^+) & \mbox{if }
  C=\PP^1\,.
\end{cases}
$$

By Corollary~\ref{cor:AH}, $X[C,\DD]\simeq X[C,\DD']$ equivariantly and
in the new $\sigma$-polyhedral divisor $\DD'$ the colored vertices
$v_z^+$ are zero for all $z\neq 0$. Furthermore, letting
\begin{align*}
&A[C,\DD]=\bigoplus_{m\in\sigma^\vee_M}A_m\chi^m,
\quad\mbox{where}\quad A_m=H^0(C,\OO(\DD(m))),\quad\mbox{and} \\
&A[C,\DD']=\bigoplus_{m\in\sigma^\vee_M}A'_m\xi^m,
\quad\mbox{where}\quad A'_m=H^0(C,\OO(\DD'(m)))\,, 
\end{align*}
by Theorem~\ref{AH} the isomorphism $A[C,\DD]\rightarrow A[C,\DD']$ is
given by $\xi^m=\varphi^m\chi^m$, where $\varphi^m\in \KK(t)$ is a
rational function whose divisor is $\DD'(m)-\DD(m)$ for all
$m\in\sigma^\vee_M$, and
$\varphi^m\cdot\varphi^{m'}=\varphi^{m+m'}$. In this setting
\eqref{eq:1} yields
\begin{align}
  \label{eq:LND+}
  \partial_+(\varphi^m\chi^m\cdot
  t^r)=d^+(v_{0}^+(m)+r)\cdot\varphi^{m+e}\chi^{m+e}\cdot t^{r+s^+},
  \quad\mbox{for all}\quad (m,r)\in M\oplus \ZZ\,.
\end{align}

Recall that the LNDs $\partial_\pm$ on $\KK[X]$ correspond to the
$U_\pm$-actions on $X$ of a compatible $\SL$-action on $X$ and so by
Corollary~\ref{cor-downg} $(i)$, the commutator
$\delta=[\partial_+,\partial_-]$ is a downgrading of the $M$-grading
on $\KK[X]$ i.e., there exits $p\in N$ such that
\begin{align}\label{eq:comm}
  \delta(f\chi^m)=\langle m,p\rangle\cdot f\chi^m, \quad\mbox{for
    all}\quad m\in\sigma^\vee_M\mbox{ and } f\in \KK(t)\,.
\end{align}

To lighten the notation, we use the ``prime notation'' to denote the
partial derivative with respect to $t$ i.e., $\tfrac{d}{dt}(f)=f'$.

\begin{proposition} \label{pr:hor-simple} %
  If $X[C,\DD]$ admits an $\SL$-action of horizontal type, then the
  marked points and the infinity points (if $C=\PP^1$) of $\tDD_+$ and
  $\tDD_-$ can be chosen to be equal i.e., in the notation above,
  without loss of generality, we may assume $z_0^+=z_0^-=0$ and
  $z_\infty^+=z_\infty^-=\infty$. Moreover, we have $d^+=d^-:=d$.
\end{proposition}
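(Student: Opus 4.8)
The plan is to extract everything from the one structural fact recorded in Corollary~\ref{cor-downg}$(i)$, namely that the commutator $\delta=[\partial_+,\partial_-]$ is the infinitesimal generator of a downgrading, so that $\delta(f\chi^m)=\langle m,p\rangle\,f\chi^m$ for some $p\in N$ as in \eqref{eq:comm}. By Lemma~\ref{lm-unmix}, the horizontal type of the action forces both $\partial_\pm$ to be of horizontal type, so each is given by a coherent pair $(\tDD_\pm,\pm e)$ and is described by the explicit formulas \eqref{eq:LND+} and \eqref{eq:LND-}. Fixing the coordinate $t$ so that $z_0^+=0$ and $z_\infty^+=\infty$, the coloring $\tDD_-$ is read off in the coordinate $q(t)=(at+b)/(ct+d)$ with $ad-bc=1$, whose zero and pole are $z_0^-$ and $z_\infty^-$. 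The whole proof rests on evaluating the identity $\delta=[\partial_+,\partial_-]$ on the single element $t\in\fract A=\KK(t)(M)$, which lies in $M$-degree $0$ and therefore must satisfy $\delta(t)=0$.

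The first computational step is to read off from \eqref{eq:LND+} and \eqref{eq:LND-} the action of $\partial_\pm$ on $t$ and on $\chi^m$. One gets $\partial_+(t)=d^+\varphi^e\chi^e\,t^{1+s^+}$ and, via the chain rule applied to $\partial_-(q)=d^-\chi^{-e}q^{1+s^-}$, the expression $\partial_-(t)=P(t)\chi^{-e}$ with $P(t)=d^-q^{1+s^-}/q'(t)$, together with the formulas for $\partial_\pm(\chi^{\pm e})$ that shift the $M$-degree by $\pm e$. Substituting these into $\delta(t)=\partial_+(\partial_-(t))-\partial_-(\partial_+(t))$ and collecting the resulting element of $\KK(t)\chi^0$, the equation $\delta(t)=0$ becomes, after factoring out the common nonvanishing factor $t^{s^+}(at+b)^{s^-}(ct+d)^{-s^-}$, the identity $R(t)=0$ for an explicit polynomial
\begin{equation*}
  R(t)=(1+s^-)at(ct+d)+(1-s^-)ct(at+b)-(v_0^+(e)+1+s^+)(at+b)(ct+d)-v_{z_0^-}^-(e)\,t
\end{equation*}
of degree at most two in $t$.

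It then remains to read off the three coefficients of $R$, substituting $s^+=-1/d^+-v_0^+(e)$ and $s^-=-1/d^-+v_{z_0^-}^-(e)$. The leading coefficient simplifies to $ac\,(1+1/d^+)$, so that $ac=0$; the constant term simplifies to $-(1-1/d^+)\,bd$, so that $bd=0$ once $d^+>1$; and, using $ad-bc=1$, the coefficient of $t$ vanishes precisely when $d^-(ad+bc)=d^+$. The case $a=0$ is impossible, as it would give $ad+bc=-1$ and hence $d^+=-d^-<0$, so from $ac=0$ we get $c=0$, whence $ad=1$, the relation $bd=0$ forces $b=0$ when $d^+>1$, and $d^-(ad+bc)=d^-=d^+$. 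Thus $q$ is a scalar multiple of $t$, i.e.\ $z_0^-=z_0^+=0$ and $z_\infty^-=z_\infty^+=\infty$, and $d^+=d^-$. In the residual case $d^+=d^-=1$ every colored vertex lies in $N$, and one simply applies Corollary~\ref{cor:AH} to translate the marked point (and the point at infinity, if $C=\PP^1$) of $\tDD_-$ onto those of $\tDD_+$. The case $C=\AF^1$ is recovered by setting $c=0$, $a=d=1$ throughout, where only the coefficients of $t$ and $t^0$ survive.

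The step I expect to be the main obstacle is the honest evaluation of the commutator in the second paragraph. Two features make it delicate. First, the exponents $s^\pm$ are genuinely fractional when $d^\pm>1$, so $t^{s^+}$ and $q^{s^-}$ are not rational functions; one must check that these fractional-power factors occur symmetrically and cancel out of $R(t)$, leaving the honest polynomial identity above. Second, passing $\partial_+$ into the $\chi$-presentation used for $\partial_-$ introduces the normalization functions $\varphi^m$ with $\divi(\varphi^m)=\DD'(m)-\DD(m)$; since this divisor is supported away from the marked point $z_0^+=0$, each $\varphi^m$ is a unit at $0$, which protects the constant term of $R$, but the $\varphi^m$ may contribute at $z_\infty^+=\infty$, so controlling the leading coefficient—on which the coincidence of the points at infinity and, through $c=0$, the equality $d^+=d^-$ rely—is the part demanding the most care.
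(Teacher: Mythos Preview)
Your plan is the paper's plan: evaluate $\delta(t)=0$ via the explicit formulas \eqref{eq:LND+}--\eqref{eq:LND-}, clear the common factor, and read off the coefficients of the resulting degree-two expression in $t$; your polynomial $R(t)$ has the same coefficients as the paper's $\Gamma$ (the paper records $ac(2-\ell^+)$, $\ell^--\ell^+(2bc+1)+2bc$, $\ell^+bd$ with $\ell^\pm=1-1/d^\pm$), and your case analysis ($ac=0$, rule out $a=0$, then $c=0$, $d^+=d^-$, finally $b=0$ unless $d^\pm=1$) matches the paper's exactly.

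The obstacle you flag in your last paragraph is the only place where your write-up is incomplete, and the resolution is cleaner than you fear. The point is not to track the behavior of $\varphi^e$ at $\infty$, but to observe two algebraic facts that the paper records in its Appendix: first, $\varphi^e$ appears as a global multiplicative factor in \emph{both} $\partial_+\partial_-(t)$ and $\partial_-\partial_+(t)$, so it divides out of the commutator without any local analysis; second, the only place where differentiation of $\varphi^e$ enters is through the quantity $\alpha_e:=t\,(\varphi^e)'/\varphi^e$, and these $\alpha_e$-contributions cancel exactly in the difference $\partial_+\partial_-(t)-\partial_-\partial_+(t)$. Concretely, $\partial_+(\chi^{-e})$ produces a term $(\alpha_e-v_0^+(e))\,q/q'$ and $\partial_-(\varphi^e t^{1+s^+})$ produces a term $(1+s^++\alpha_e)\,q/q'$; subtracting, the $\alpha_e\cdot q/q'$ pieces cancel and only $-(1+s^++v_0^+(e))\,q/q'=-(1-1/d^+)\,q/q'$ survives. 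Once you record this cancellation, your $R(t)$ is justified as written, and the leading coefficient $ac(1+1/d^+)$ is exact, with no $\varphi^e$-pollution. So the concern you raise is legitimate to flag but is not, in the end, a genuine obstruction.
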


\begin{proof}
  By \eqref{eq:comm} we have $\delta(t)=0$ and a routine computation
  (see the appendix) shows that
  \begin{align}\label{commutator-1}
    \delta(t)=d^+d^-\cdot\varphi^e\cdot t^{s^+}\cdot
    q^{s^-}\cdot\left(\left(1-\tfrac{1}{d^-}\right)t-
      \left(1-\tfrac{1}{d^+}\right)\frac{q}{q'}-\frac{q''q
        t}{(q')^2}\right)\,,
  \end{align}
  and so
  $$\Gamma:=\left(\left(1-\tfrac{1}{d^-}\right)t-
    \left(1-\tfrac{1}{d^+}\right)\frac{q}{q'}-\frac{q''q
      t}{(q')^2}\right)=0\,.$$ %

  Recall that $q(t)=\tfrac{at+b}{ct+d}$ with $ad-bc=1$. Letting
  $\ell^\pm=1-\nicefrac{1}{d^\pm}$, a simple computation shows that
  $$\Gamma=ac(2-\ell^+)t^2+(\ell^--\ell^+(2bc+1)+2bc)t+\ell^+bd=0\,.$$
  Since $\ell^\pm<1$, we have $ac=0$. If $a=0$, then $bc=-1$ and so
  $\ell^++\ell^-=2$. This provides a contradiction. Hence, $c=0$ so
  $ad=1$ and $\ell^+=\ell^-$. This last equality gives
  $d^+=d^-$. Furthermore, the equality $c=0$ yields
  $z_\infty^-=z_\infty^+=\infty$. Hence, we may assume
  $q(t)=t-z_0^-$ and the commutator becomes
  $$\delta(t)=d^+d^-\cdot\varphi^e\cdot t^{s^+}\cdot
  (t-z_0^-)^{s^-}\cdot\Big(\left(1-\tfrac{1}{d^-}\right)t-
  \left(1-\tfrac{1}{d^+}\right)(t-z_0^-)\Big)\,.$$ %

  Assume for a moment that $z_0^-\neq z_0^+=0$. Then $\delta(t)=0$
  implies $d^+=d_-=1$ i.e., the colored vertices $v_{z_0^+}^+$ and
  $v_{z_0^-}^-$ of the respective marked point belong to the lattice
  $N$. In this case Definition~\ref{def:col-pol} shows that there are
  no marked points and we can choose $z_0^+=z_0^-$ to be any point
  different from the common point at infinity.
\end{proof}

By the previous proposition, in the sequel we assume $z_0^+=z_0^-=0$,
$z_\infty^+=z_\infty^-=\infty$, and $d^+=d^-:=d$ so that the LNDs
$\partial_+$ and $\partial_-$ are given by
\begin{align*}
  \partial_+(\varphi^m\chi^m\cdot
  t^r)=d\cdot(v_{0}^+(m)+r)\cdot\varphi^{m+e}\chi^{m+e}\cdot t^{r+s^+},
  \quad\mbox{for all}\quad (m,r)\in M\oplus \ZZ\,. \\
  \partial_-(\chi^m\cdot t^r)=d\cdot(v_{0}^-(m)+r)\cdot\chi^{m-e}\cdot
  t^{r+s^-}, \quad\mbox{for all}\quad
  (m,r)\in M\oplus \ZZ\,. \\
  s^+=-\nicefrac{1}{d}-v_{0}^+(e),\quad
  s^-=-\nicefrac{1}{d}+v_{0}^-(e)\quad\mbox{and}\quad
  \varphi^m=\prod_{z\neq 0,\infty} (t-z)^{-v_z^+(m)},\ \forall m \in
  \sigma_M^\vee\,.
\end{align*}

\begin{corollary} \label{cor-tor-hor} %
  Let $X=X[C,\DD]$ be a $\TT$-variety of complexity one endowed with a
  compatible $\SL$-action of horizontal type. If $\Delta_z=\sigma$ for
  all $z\neq z_0^\pm,z_\infty^\pm$, then $X$ is toric and the $\SL$-action is
  compatible with the big torus.
\end{corollary}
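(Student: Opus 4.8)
The plan is to show that under the hypothesis $\Delta_z = \sigma$ for all $z \neq z_0^\pm, z_\infty^\pm$, the combinatorial data $\DD$ can be chosen to be supported in at most one point (if $C = \AF^1$) or at most two points (if $C = \PP^1$), so that Corollary~\ref{AH-toric} applies directly to conclude that $X$ is toric. First I would observe that by Proposition~\ref{pr:hor-simple} we may already assume $z_0^+ = z_0^- = 0$ and $z_\infty^+ = z_\infty^- = \infty$, so the two colorings share their marked point and point at infinity. The hypothesis then says that $\Delta_z = \sigma$ for every $z$ outside the set $\{0, \infty\}$; in other words $\supp\DD \subseteq \{0\}$ if $C = \AF^1$ and $\supp\DD \subseteq \{0, \infty\}$ if $C = \PP^1$.

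Next I would invoke Corollary~\ref{AH-toric} verbatim: a proper $\sigma$-polyhedral divisor on a smooth curve yields a toric variety precisely when it can be chosen (via Corollary~\ref{cor:AH}) supported in at most one point in the $\AF^1$ case, or at most two points in the $\PP^1$ case. Since we have just arranged $\supp\DD$ to meet exactly these cardinality bounds, $X = X[C,\DD]$ is toric. This identifies $X$ with an affine toric variety $X_{\tsigma}$ for some cone $\tsigma$ in $(N \oplus \ZZ)_\QQ$, and the big torus $\TT' = \spec\KK[M \oplus \ZZ]$ contains the original $\TT = \spec\KK[M]$ as a subtorus.

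The remaining point — and the one requiring a little care — is that the $\SL$-action is not merely present on a variety that happens to be toric, but is actually \emph{compatible with the big torus} $\TT'$. Here I would exploit the formulas \eqref{eq:2} and \eqref{eq:LND+}, \eqref{eq:LND-}, which express each $\partial_\pm$ in the ``toric form'' $\partial(\chi^{\tm}) = \langle \tm, \trho_\pm\rangle \chi^{\tm + \te_\pm}$ with respect to the enlarged lattice $M \oplus \ZZ$, where $\te_\pm = (\pm e, s^\pm)$ and $\trho_\pm = (d\cdot v_{z_0^\pm}^\pm, d)$ are the lifted roots and distinguished rays of $\tsigma$. By Theorem~\ref{toric}, such derivations are exactly the homogeneous LNDs on the toric algebra $\KK[\tsigma^\vee_{M\oplus\ZZ}]$; in particular each $\partial_\pm$ is homogeneous with respect to the $(M\oplus\ZZ)$-grading, which is precisely the statement that the corresponding $U_\pm$-action is normalized by $\TT'$. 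Since the downgrading $\delta = [\partial_+,\partial_-]$ is likewise $(M\oplus\ZZ)$-homogeneous by \eqref{eq:comm}, the whole $\SL$-action is normalized by $\TT'$, hence compatible with the big torus.

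The main obstacle I anticipate is the bookkeeping in the last paragraph: one must verify that the lifts $\te_\pm$ genuinely constitute an $\SL$-root pair of $\tsigma$ in the sense of Definition~\ref{def:sl-root}, i.e.\ that $\te_+ = -\te_-$ and that both are roots of the \emph{same} cone $\tsigma$. This is where the equalities $z_0^+ = z_0^-$, $z_\infty^+ = z_\infty^-$ and $d^+ = d^-$ from Proposition~\ref{pr:hor-simple} are essential, for they guarantee that both colorings produce the identical associated cone $\tomega = \tsigma$; without the common marked point and common $d$ the two roots would live in different lattices or cones and would not assemble into a single compatible $\SL$-action. Once this matching is in place, the conclusion follows by reduction to the toric classification of Theorem~\ref{sl-toric}.
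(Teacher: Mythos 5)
Your proposal is correct and takes essentially the same route as the paper: toricness via Corollary~\ref{AH-toric}, and compatibility with the big torus via homogeneity of $\partial_\pm$ (hence of $\delta$) with respect to the $(M\oplus\ZZ)$-grading $\deg\chi^m=(m,0)$, $\deg t=(0,1)$. The one step you leave implicit, which the paper states explicitly, is that $\Delta_z=\sigma$ for $z\neq 0,\infty$ forces $v_z^+=0$ and hence $\varphi^m=1$, so that \eqref{eq:LND+} genuinely reduces to the toric form \eqref{eq:2}; also, your closing concern about checking that $\te_\pm$ assemble into an $\SL$-root pair of $\tsigma$ is not needed for the corollary, since compatibility only requires the $(M\oplus\ZZ)$-homogeneity already established.
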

\begin{proof}
  The variety $X$ is toric by Corollary~\ref{AH-toric}. Furthermore,
  the big torus action is induced by the $(M\oplus \ZZ)$-grading of
  $\KK[X]$ given by $\deg(\chi^m)=(m,0)$ and $\deg(t)=(0,1)$. Since
  $\Delta_z=0$ for all $z\in \AF^1\setminus\{0\}$, we have
  $\varphi^m=1$ for all $m\in \sigma_M^\vee$. Hence, by \eqref{eq:2}
  $\partial_\pm$ are homogeneous with respect to the $(M\oplus
  \ZZ)$-grading of $\KK[X]$. This gives that the $U_\pm$-actions on
  $X$ are compatible with the big torus action, and so does the
  $\SL$-action.
\end{proof}

Since compatible $\SL$-actions on toric varieties are described in
Theorem~\ref{sl-toric}, in the sequel we restrict to the case where
the $\SL$-action on $X$ is not compatible with a bigger torus. In the
next lemma we show that if $X[C,\DD]$ admits an $\SL$-action of
horizontal type, then $\DD$ has a very special form. 

For a subset $S\subseteq N_{\QQ}$ we denote the convex hull of $S$ by
$\conv(S)$. For a vector $e\in M_{\QQ}$ we let $e^\bot=\{p\in
N_{\QQ}\mid \langle e,p\rangle=0\}$ be the subspace of $N_{\QQ}$
orthogonal to $e$.

\begin{lemma} \label{lm:hor-DD} Let $X=X[C,\DD]$ be a normal affine
  $\TT$-variety of complexity one endowed with a compatible
  $\SL$-action of horizontal type. Assume that the $\SL$-action is not
  compatible with a bigger torus and let $e\in M$ be the degree of the
  homogeneous LND $\partial_+$ on $\KK[X]$ corresponding to the
  $U_+$-action on $X$. Then $C=\AF^1$ or $C=\PP^1$, $\DD=\sum_{z\in
    C}\Delta_z\cdot z$, and the $\sigma$-polyhedra $\Delta_z$ can be
  chosen (via Corollary~\ref{cor:AH}) as in one of the following
  cases:
  $$\Delta_0=\conv(0,v_0^-)+\sigma,\quad
  \Delta_1=\conv(0,v_1^+)+\sigma, \quad \Delta_z=\sigma\ \forall z\in
  \AF^1\setminus\{0,1\},\quad\mbox{and}\quad
  \Delta_\infty=\Pi+\sigma\,,$$ %
  where $v_0^-,v_1^+\in N$, $v_0^-,v_1^+\notin \sigma$, $v_0^-(e)=1$,
  $v_1^+(e)=-1$, and $\Pi\subseteq e^\bot$ is
  a bounded polyhedron; or
  $$\Delta_0=v_0^-+\sigma,\quad
  \Delta_1=\conv(0,v_1^+)+\sigma, \quad \Delta_z=\sigma\ \forall z\in
  \AF^1\setminus\{0,1\},\quad\mbox{and}\quad
  \Delta_\infty=\Pi+\sigma\,,$$ %
  where $2v_0^-,v_1^+\in N$, $v_1^+\notin \sigma$, $2v_0^-(e)=1$, $v_1^+(e)=-1$, and
  $\Pi\subseteq e^\bot$
  is a bounded polyhedron.
\end{lemma}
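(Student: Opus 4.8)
The plan is to combine the explicit formulas for $\partial_\pm$ established just before the statement with the structural constraints coming from the two conditions in Definition~\ref{col-div} (coherence) and from the requirement that $\delta=[\partial_+,\partial_-]$ be a pure downgrading of the $M$-grading, as in \eqref{eq:comm}. Having already fixed, via Proposition~\ref{pr:hor-simple}, the common marked point $z_0^\pm=0$, the common point at infinity $z_\infty^\pm=\infty$, and the common denominator $d^+=d^-=d$, the two colorings $\tDD_+$ and $\tDD_-$ differ only in the choice of colored vertices $v_z^+$ and $v_z^-$. First I would compute the commutator $\delta=[\partial_+,\partial_-]$ explicitly on a general homogeneous element $\chi^m\cdot t^r$ using the two displayed formulas for $\partial_\pm$; the vanishing of the ``shift'' part of $\delta$ (which \eqref{eq:comm} forces, since $\delta$ must preserve both the $M$-degree and the $t$-degree) will produce strong relations among the $v_z^\pm$, the exponents $s^\pm$, and the rational functions $\varphi^m$.

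The key numerical inputs are the values $s^\pm=-\nicefrac{1}{d}\mp v_0^\pm(e)$ together with coherence condition $(1)$, which via the formula $s=-\nicefrac{1}{d}-v_{z_0}(e)$ pins down $v_0^+(e)$ and $v_0^-(e)$ once we know $d$. The dichotomy in the statement — whether $v_0^-\in N$ (so $d=1$, giving $v_0^-(e)=1$) or only $2v_0^-\in N$ (so $d=2$, giving $2v_0^-(e)=1$) — should fall out of analyzing what values of $d$ are compatible with $\delta$ being a clean downgrading; this is where I expect condition~$(3)$ of coherence (the inequality $d\cdot v(e)\geq 1+d\cdot v_{z_0}(e)$ at the marked point) to interact with the commutator computation. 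The colored vertex $v_1^+$ at the second marked point (relabeled to $z=1$) must satisfy $v_1^+\in N$ and $v_1^+(e)=-1$ by the symmetric analysis applied to $\partial_-$, whose marked-point structure sits at $0$ but whose integrality forces the remaining vertex data for $\partial_+$ to lie in $N$. Throughout I would invoke Corollary~\ref{cor:AH} repeatedly to normalize the non-marked coefficients $\Delta_z$ to $\sigma$ and to translate the polyhedra so that the colored vertices take the displayed forms.

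The treatment of $\Delta_\infty$ requires the condition $(4)$ of coherence (for $C=\PP^1$) and, crucially, the assumption that the $\SL$-action is \emph{not} compatible with a bigger torus. By Corollary~\ref{cor-tor-hor}, if $\Delta_z=\sigma$ for all $z\neq 0,1,\infty$ the variety is already toric, so the non-toricity hypothesis is what prevents the polyhedra from degenerating further and forces the specific shape $\Delta_\infty=\Pi+\sigma$. I would show that the compatibility of $\partial_+$ and $\partial_-$ (both having degrees $\pm e$ and both being honest LNDs attached to colored divisors on the \emph{same} underlying $\DD$) forces $v_{\deg}(\partial_+)$ and $v_{\deg}(\partial_-)$ to be arranged so that the bounded part $\Pi$ of $\Delta_\infty$ lies in the hyperplane $e^\bot$; concretely, the two inequalities defining the tail behavior at $\infty$ for $\pm e$ must hold simultaneously, and their conjunction collapses to $\langle e,\,\cdot\,\rangle=0$ on $\Pi$. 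This is essentially the same bounded-to-a-slab phenomenon already observed in Remark~\ref{rk:fiber}$(2)$ for the fiber-type case.

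The main obstacle will be disentangling the roles of the two denominators $d^\pm$ and the rational functions $\varphi^m$ inside the commutator: the formula for $\delta(\chi^m t^r)$ mixes the substitution $\chi^m\mapsto\varphi^{m+e}\chi^{m+e}$ from $\partial_+$ with the plain action of $\partial_-$, so verifying that all the $\varphi$-factors cancel and that only the scalar $\langle m,p\rangle$ survives (with $p=\rho_{-e}-\rho_e$ in the lifted cone, as in the fiber-type and toric cases) is the delicate step. Once that cancellation is controlled, the constraints $v_0^-(e)=1$, $v_1^+(e)=-1$, and the integrality/half-integrality alternative for $v_0^-$ are immediate bookkeeping from $s^\pm$ and coherence. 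I would therefore organize the proof as: (a) write the commutator and impose \eqref{eq:comm}; (b) extract the vertex-evaluation and integrality relations, obtaining the two cases $d=1$ and $d=2$; (c) normalize via Corollary~\ref{cor:AH} to reach the displayed $\Delta_0,\Delta_1$; and (d) use non-toricity together with coherence condition $(4)$ to pin down $\Delta_\infty=\Pi+\sigma$ with $\Pi\subseteq e^\bot$.
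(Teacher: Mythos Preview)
Your overall strategy --- compute $\delta=[\partial_+,\partial_-]$ on $\chi^m t^r$, impose \eqref{eq:comm}, and read off constraints --- is exactly what the paper does (the explicit commutator is formula~\eqref{commutator-2}). But several of your attributions are misplaced in ways that would send you looking in the wrong place when you actually carry out the computation.

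First, the not-compatible-with-a-bigger-torus hypothesis does \emph{not} shape $\Delta_\infty$. Its only role is to exclude one branch of the commutator case analysis: the paper splits according to whether $\nu v_0\neq 0$, $\nu=0$, or $v_0=0$ (with $v_0=v_0^--v_0^+$ and $\nu=v_0(e)-\tfrac{1}{d}$), and the first branch forces $v_z^+=0$ for all $z\neq 0,\infty$, hence $\Delta_z=\sigma$ there, hence toric by Corollary~\ref{cor-tor-hor} --- so that branch is discarded. (Note that Corollary~\ref{cor-tor-hor} requires $\Delta_z=\sigma$ for $z\neq 0,\infty$, not for $z\neq 0,1,\infty$ as you wrote.) The condition $\Pi\subseteq e^\bot$ comes purely from coherence~(4) applied to both $(\tDD_\pm,\pm e)$, as you correctly describe elsewhere in your sketch. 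Second, the dichotomy $d\in\{1,2\}$ and the values $v_0^-(e)=1$, $v_1^+(e)=-1$ are \emph{not} handed to you by coherence conditions~(1) or~(3); they fall out of the commutator itself. The requirement that $\Gamma$ be independent of $t$ forces $\deg(\varphi^e\, t^{\nu-1/d})=0$, and examining the exponent $\nu-\tfrac{1}{d}$ of $t$ yields $d=1$ (Case~II, $\nu=0$) or $d=2$ (Case~III, $v_0=0$). Likewise, $z=1$ is not a ``second marked point'': the marked point is $0$ for both colorings, and $z=1$ is singled out as the unique $z\neq 0,\infty$ with $v_z^+(e)\neq 0$ by the constraint $\deg\varphi^e=1$ coming from the commutator. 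Only \emph{after} the commutator has fixed these numbers do coherence conditions~(2)--(4) enter, to determine the remaining vertex structure of each $\Delta_z$.
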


For the proof of this lemma we need the following notation:
$$\alpha_m=t\frac{d}{dt}\left(\ln(\varphi^m)\right), \quad v_0=v_0^--v_0^+
\quad\mbox{and}\quad \nu=v_0(e)-\nicefrac{1}{d}\,.$$ %
With this definition $\alpha_{m+m'}=\alpha_m+\alpha_{m'}$. %
More explicitly,
\begin{align*}
\alpha_m=-t\sum_{z\neq 0,\infty}v_z^+(m)\frac{1}{t-z}=
-v^+(m)-\sum_{z\neq 0,\infty}v_z^+(m)\frac{z}{t-z},
\quad\mbox{where}\\ v^+=\sum_{z\neq 0,\infty}v_z^+=v_{\deg}^+-v_0^+, 
\quad\mbox{and}\quad \alpha'_m=\tfrac{d}{dt}(\alpha_m)=\sum_{z\neq
  0,\infty}v_z^+(m)\frac{z}{(t-z)^2}\,.
\end{align*}

For a rational function $R(t)=\nicefrac{P(t)}{Q(t)}$, we define the
degree $\deg R=\deg P-\deg Q$ so that $\deg(R_1\cdot
R_2)=\deg(R_1)+\deg(R_2)$. We also let the \emph{principal part} of
$R$ be the result of the polynomial division between $P$ and $Q$. Then
$\deg(R)=0$ if and only if the principal part of $R$ is a non-zero
constant.

\begin{proof}[Proof of Lemma~\ref{lm:hor-DD}]
  The appendix shows that the commutator
  $\delta=[\partial_+,\partial_-]$ is given by
  \begin{align}
    \label{commutator-2}
    \delta(\chi^m t^r)=d^2\varphi^e t^{\nu-\nicefrac{1}{d}}
    \cdot\left(\nu
      v_0(m)+\alpha_ev_0(m)+\nu\alpha_m+t\alpha'_m+\alpha_e\alpha_m
    \right)\cdot \chi^m t^r:=\Gamma\cdot \chi^m t^r\,.
  \end{align}
  Thus, by \eqref{eq:comm} the expression $\Gamma$ has to be
  independent of $t$, linear in $m$, and $\Gamma\not\equiv 0$.

  Assume that $v_{z}^+\neq 0$ and $v_{z}^+(e)=0$ for some $z\neq
  0,\infty$. Then $\varphi^e$ does not contain the factor $(t-z)$ and
  for any $m$ such that $v_{z}^+(m)\neq 0$ the summand
  $v_z^+(m)\tfrac{zt}{(t-z)^2}$ in $t\alpha'_m$ cannot be eliminated
  since $\alpha_m$ and $t\alpha'_m$ are linearly independent. Hence
  $v_{z}^+(e)=0$ implies $v_{z}^+=0$. Moreover, we have
  $v_z^+(e)\in\{0,-1,-2\}$ since otherwise, the factor
  $(t-z)^{-v_z^+(e)}$ in $\varphi^e$ cannot be canceled in
  $\Gamma$. Hence, $\varphi^e$ is a polynomial.
 
  A direct computation shows that the principal part of $\nu
  v_0(m)+\alpha_ev_0(m)+\nu\alpha_m+t\alpha'_m+\alpha_e\alpha_m$ is
  given by
  $$L:=(v_0(e)-v^+(e)-\nicefrac{1}{d})\cdot(v_0(m)-v^+(m))\,. $$

  Assume that $L(e)=0$. Since
  $\deg(\varphi^et^{\nu-\nicefrac{1}{d}})=\nu-\nicefrac{1}{d}-v^+(e)
  =v_0(e)-v^+(e)-\nicefrac{2}{d}$, we have
  $\deg(\varphi^et^{\nu-\nicefrac{1}{d}})<0$ and so
  $\deg(\Gamma)<0$. This is a contradiction since $\Gamma(e)$ has to
  be independent of $t$. In the following, we assume $L(e)\neq
  0$. This yields $\deg(\varphi^et^{\nu-\nicefrac{1}{d}})=0$.

  We divide the proof in the following three cases:
  $$\mbox{Case I\,: }\nu v_0\neq 0, \quad \mbox{Case II\,: }\nu=0,
  \quad\mbox{and}\quad \mbox{Case III\,: }v_0=0\,.$$

  \medskip\noindent{\bf Case I\,:} Evaluating $\Gamma$ in $t=0$ we
  obtain $\Gamma=d^2\varphi^e(0)\cdot 0^{\nu-\nicefrac{1}{d}}\cdot \nu
  v_0(m)$. Hence, we have $\nu-\nicefrac{1}{d}=0$ and since
  $\deg(\varphi^et^{\nu-\nicefrac{1}{d}})=0$, we have
  $\varphi^e=1$. This yields $v_{z}^+(e)=0$ for all $z\neq 0,\infty$
  and so $v_{z}^+=0$ for all $z\neq 0,\infty$.

  Let $z\neq 0,\infty$ and assume that $\Delta_z$ has a vertex $v\neq
  0$. Since $v_{z}^\pm=0$ by Definition~\ref{col-div} (2) applied to
  $\tDD_\pm$ we obtain $v(e)\geq 1$ and $-v(e)\geq 1$ which provides a
  contradiction. This yields $\Delta_z=\sigma$. Thus, $X[C,\DD]$ is a
  toric variety and by Corollary~\ref{AH-toric} the $\SL$-action is
  compatible with the big torus.

  \medskip\noindent{\bf Case II\,:} The condition $\nu=0$ implies
  $d=1$ since $\nu-\nicefrac{1}{d}$ appears as the exponent of $t$ in
  $\Gamma$. This yields $v_0^\pm\in N$ and so we can assume $v_0^+=0$
  by Corollary~\ref{cor:AH}. Now $\nu=v_0^-(e)-1$ and so
  $v_0^-(e)=1$. Furthermore,
  $\deg(\varphi^et^{\nu-\nicefrac{1}{d}})=0$ implies
  $\deg(\varphi^e)=1$ and so we can assume $v^+_z(e)=0$ for all $z\neq
  1$ and $v^+_1(e)=-1$. This yields $v^+_z=0$ for all $z\neq 1$. Now, the
  commutator is given by 
  \begin{align*}
    \delta(\chi^m t^r)=\frac{t-1}{t}
    \cdot\left(v_0^-(m)\frac{t}{t-1}+v_1^+(m)\frac{t}{(t-1)^2}-
      v_1^+(m)\frac{t^2}{(t-1)^2} \right)\cdot \chi^m t^r\,.
  \end{align*}

  Since
  \begin{align} \label{eq:rational}
    \frac{t}{t-1}+\frac{t}{(t-1)^2}-\frac{t^2}{(t-1)^2}=0\,,
  \end{align}
  we have
  $$\delta(\chi^m t^r)=\langle m,v_0^--v_1^+\rangle \cdot \chi^m t^r,
  \quad\mbox{for all}\quad (m,r)\in M\oplus \ZZ\,.$$ %
  
  Let now $z\neq 0,1,\infty$ and assume that $\Delta_z$ has a vertex
  $v\neq 0$. Since $v_{z}^\pm=0$ by Definition~\ref{col-div} (2)
  applied to $\tDD_\pm$ we obtain $v(e)\geq 1$ and $-v(e)\geq 1$ which
  provides a contradiction. Thus $\Delta_z=\sigma$. A similar argument
  shows that the only vertices in $\Delta_0$ and $\Delta_1$ are
  $\{0,v_0^-\}$ and $\{0,v_1^+\}$, respectively. Finally, if
  $C=\PP^1$, let $v$ be a vertex of
  $\Delta_\infty$. Definition~\ref{col-div} (4) shows that $v(e)\geq
  0$ and $v(e)\geq 0$, so that $-v(e)=0$. This corresponds to the
  first case in the lemma.

  \medskip\noindent{\bf Case III\,:} The condition $v_0=0$ implies
  $v_0^-=v_0^+$ and $\nu=-\nicefrac{1}{d}$. Hence $d=1$ or $d=2$ since
  $\nu-\nicefrac{1}{d}=-\nicefrac{2}{d}$ appears as the exponent of
  $t$ in $\Gamma$. If $d=1$, then by Definition~\ref{def:col-pol}, we
  can change the marked points of $\tDD_\pm$ so that $v_{z_0}^-\neq
  v_{z_0}^+$. Hence, this case reduces to Case I or Case II.
  
  Assume now that $d=2$ so that $v_0^-=v_0^+\in \tfrac{1}{2}N\setminus
  N$. The condition $\deg(\varphi^et^{\nu-\nicefrac{1}{d}})=0$ implies
  $\deg(\varphi^e)=1$ and so we can assume $v^+_z(e)=0$ for all $z\neq
  1$ and $v^+_1(e)=-1$. This yields $v^+_z=0$ for all $z\neq 1$. The
  commutator is now given by
  \begin{align*}
    \delta(\chi^m t^r)=2v_1^+(m)\frac{t-1}{t}
    \cdot\left(\frac{t}{t-1}+2\frac{t}{(t-1)^2}-
      2\frac{t^2}{(t-1)^2} \right)\cdot \chi^m t^r\,.
  \end{align*}
  By \eqref{eq:rational} we have
  $$\delta(\chi^m t^r)=\langle m,-2v_1^+\rangle \cdot \chi^m t^r,
  \quad\mbox{for all}\quad (m,r)\in M\oplus \ZZ\,.$$ %

  By the same argument as in Case II we obtain $\Delta_z=\sigma$ for
  all $z\neq 0,1,\infty$ and the only vertices in $\Delta_0$ and
  $\Delta_1$ are $\{v_0^-\}$ and $\{0,v_1^+\}$, respectively. Finally,
  if $C=\PP^1$, let $v$ be a vertex of $\Delta_\infty$. By
  Corollary~\ref{cor:AH}, we can assume that $2v_0^-(e)=1$, and
  Definition~\ref{col-div} (4) shows that for every vertex $v$ of
  $\Delta_\infty$, we have $v(e)=0$. This corresponds to the second
  case in the lemma. The proof is now completed.
\end{proof}

To obtain a full classification of compatible $\SL$-actions of
horizontal type on $X$ we only need conditions for existence of the
homogeneous LNDs $\partial_\pm$ of horizontal type on $\KK[X]$ defined
in \eqref{eq:LND-} and \eqref{eq:LND+}.

The following theorem provides the announced classification of
compatible $\SL$-actions of horizontal type on $\TT$-varieties of
complexity one. For the theorem, we need the following notation. Let
$\DD$ be as in Lemma~\ref{lm:hor-DD}. Then $\DD$ admits two different
colorings as in \eqref{eq:colorings}.  We let $\tomega_\pm$ be the
associated cone of $\tDD_\pm$ (see before
Definition~\ref{col-div}). Furthermore, for every $e\in M$, we let
$\te_\pm=(\pm e,-\nicefrac{1}{d}\mp v_0^\pm(e))\in M\oplus \ZZ$.

\begin{theorem} \label{th:hor} %
  Let $X=X[C,\DD]$ be a normal affine $\TT$-variety of complexity
  one. Then $X$ admits a compatible $\SL$-action of horizontal type
  that is not compatible with a bigger torus if and only if the
  following conditions hold.
  \begin{enumerate}[(i)]
  \item The base curve $C$ is either $\AF^1$ or $\PP^1$.
  \item There exists a lattice vector $e\in M$ such that the
    $\sigma$-polyhedral divisor $\DD$ may be shifted via
    Corollary~\ref{cor:AH} to one of the following two forms
    $$\Delta_0=\conv(0,v_0^-)+\sigma,\quad
    \Delta_1=\conv(0,v_1^+)+\sigma, \quad \Delta_z=\sigma\ \forall
    z\in \AF^1\setminus\{0,1\},\quad\mbox{and}\quad
    \Delta_\infty=\Pi+\sigma\,,$$ %
    where $v_0^-,v_1^+\in N$, $v_0^-,v_1^+\notin \sigma$,
    $v_0^-(e)=1$, $v_1^+(e)=-1$, and $\Pi\subseteq e^\bot$ is a
    bounded polyhedron; or
    $$\Delta_0=v_0^-+\sigma,\quad
    \Delta_1=\conv(0,v_1^+)+\sigma, \quad \Delta_z=\sigma\ \forall
    z\in \AF^1\setminus\{0,1\},\quad\mbox{and}\quad
    \Delta_\infty=\Pi+\sigma\,,$$ %
    where $2v_0^-,v_1^+\in N$, $v_1^+\notin \sigma$, $2v_0^-(e)=1$,
    $v_1^+(e)=-1$, and $\Pi\subseteq e^\bot$ is a bounded polyhedron.
  \item The lattice vectors $\te_\pm\in M\oplus\ZZ$ are roots of the
    cones $\tomega_\pm$, respectively.
  \end{enumerate}

  Moreover, if $(C,\sigma,\DD)$ is in one of the two forms above, then
  the compatible $\SL$-action of horizontal type on $X$ is given by
  the $\slt$-triple $\{\delta,\partial_+,\partial_-\}$ of derivations,
  where $\delta=[\partial_+,\partial_-]$, the homogeneous LNDs
  $\partial_\pm$ are given by the coherent pairs $(\tDD_\pm,\pm e)$,
  and $\tDD_\pm$ are the following colorings of $\DD$
  \begin{align*}
    &\begin{cases}
      \tDD_+=\{\DD,\infty;v_1=v_1^+, v_z=0,\forall z\neq 1,\infty\},\\
      \tDD_-=\{\DD,\infty;v_0=v_0^-, v_z=0,\forall z\neq
      0,\infty\}\,,
    \end{cases} \mbox{in the first case; or}\\
    &\begin{cases}
      \tDD_+=\{\DD,\infty;v_0=v_0^-,v_1=v_1^+, v_z=0,\forall z\neq
      0,1,\infty\}, \\
      \tDD_-=\{\DD,\infty;v_0=v_0^-, v_z=0,\forall
      z\neq 0,\infty\},
    \end{cases} \mbox{in the second case.}
  \end{align*}
\end{theorem}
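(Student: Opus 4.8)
The proof splits into the two implications of the equivalence, and for the forward (necessity) direction almost all the work is already in place. The plan is to assume $X=X[C,\DD]$ carries a compatible $\SL$-action of horizontal type not compatible with a bigger torus, realize it by an $\slt$-triple $\{\delta,\partial_+,\partial_-\}$ via Proposition~\ref{sl-actions} and Corollary~\ref{cor-downg}, and observe that by Lemma~\ref{lm-unmix} both $\partial_\pm$ are then of horizontal type. I next invoke Theorem~\ref{lnd-hor} to write $\partial_\pm$ as the LNDs attached to coherent pairs $(\tDD_\pm,\pm e)$, use Proposition~\ref{pr:hor-simple} to normalize the marked points and the point at infinity to be common and to force $d^+=d^-=d$, and apply Lemma~\ref{lm:hor-DD} to put $\DD$ into exactly one of the two normal forms of item (ii). Item (iii) is then not an extra hypothesis but part of the data: condition (1) of Definition~\ref{col-div} applied to $(\tDD_\pm,\pm e)$ says precisely that $\te_\pm$ is a root of $\tomega_\pm$. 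Thus necessity follows by assembly of the earlier results.

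For the converse (sufficiency) I start from $(C,\sigma,\DD,e)$ satisfying (i)--(iii), declare the two colorings $\tDD_\pm$ exactly as in the ``moreover'' part, and first check that $(\tDD_\pm,\pm e)$ are coherent in the sense of Definition~\ref{col-div}. Condition (1) is hypothesis (iii); the inequalities (2)--(4) reduce to the numerical data $v_0^-(e)=1$, $v_1^+(e)=-1$ (resp.\ $2v_0^-(e)=1$), to the fact that the only vertices of $\Delta_0,\Delta_1$ are $0,v_0^-$ and $0,v_1^+$, and to $\Pi\subseteq e^\bot$, which forces $v(e)=0$ for every vertex $v$ of $\Delta_\infty$. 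By Theorem~\ref{lnd-hor} the coherent pairs produce homogeneous LNDs $\partial_\pm$ of horizontal type of degrees $\pm e$, given explicitly by \eqref{eq:LND-} and \eqref{eq:LND+}. I then compute $\delta=[\partial_+,\partial_-]$ from formula \eqref{commutator-2}; the cancellation identity \eqref{eq:rational} collapses it to $\delta(f\chi^m)=\langle m,p\rangle\,f\chi^m$ with $p=v_0^--v_1^+$ in the first case and $p=-2v_1^+$ in the second, so $\delta$ is the infinitesimal generator of a downgrading of the $M$-grading.

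It remains to certify the $\slt$-relations and the qualitative properties. Since $v_0^-(e)-v_1^+(e)=2$ (resp.\ $-2v_1^+(e)=2$) one has $\langle e,p\rangle=2$, whence $\deg_\ZZ\partial_\pm=\pm2$ and $[\delta,\partial_\pm]=\pm2\partial_\pm$, and Proposition~\ref{sl-actions} upgrades the triple to a genuine $\SL$- (or $\PSL$-) action; it is compatible, and of horizontal type by Lemma~\ref{lm-unmix} because $\partial_\pm$ are. Non-compatibility with a bigger torus follows from the shape of $\DD$: the support genuinely meets the points $0,1$ (and $\infty$ if $C=\PP^1$) with $\Delta_1\neq\sigma$ at a point distinct from the marked and infinity points, so $\DD$ cannot be reduced via Corollary~\ref{cor:AH} to the toric normal form of Corollary~\ref{AH-toric}; hence $X$ is not toric, and in complexity one a bigger torus would force complexity zero, a contradiction. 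The step I expect to be the main obstacle is the commutator computation collapsing to a \emph{downgrading}: one must track that the asymmetry between $\tDD_+$ and $\tDD_-$ (the colored vertex $v_1^+$ sits in $\tDD_+$, while $v_0^-$ occurs in $\tDD_-$ and, in the second case, in both colorings) is exactly what makes \eqref{eq:rational} yield a $t$-independent, $m$-linear commutator; checking the coherence inequalities and the non-toricity claim are comparatively routine once the two forms are fixed.
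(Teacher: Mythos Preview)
Your proposal is correct and follows essentially the same route as the paper: the forward direction is assembled from Proposition~\ref{pr:hor-simple}, Lemma~\ref{lm:hor-DD}, and Definition~\ref{col-div}(1) exactly as the paper does, and the converse reduces to checking that conditions (2)--(4) of Definition~\ref{col-div} are automatic from the normal forms in (ii), which the paper dismisses as ``a routine verification'' and you spell out in slightly more detail. Your explicit treatment of the $\slt$-relations $[\delta,\partial_\pm]=\pm 2\partial_\pm$ and of non-compatibility with a bigger torus (via non-toricity of $X$ and Corollary~\ref{AH-toric}) fills in points the paper leaves implicit, but does not diverge from its strategy.
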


\begin{proof}
  By Lemma~\ref{lm:hor-DD}, if $X$ admits a compatible $\SL$-action of
  horizontal type that is not compatible with a bigger torus, then (i)
  and (ii) hold. Moreover, by the proof of Lemma~\ref{lm:hor-DD}, such
  an $X$ admits a compatible $\SL$-action of horizontal type if and
  only if the derivations $\partial_\pm$ given by \eqref{eq:LND-} and
  \eqref{eq:LND+} define LNDs on $\KK[X]$.

  By Theorem~\ref{lnd-hor}, the derivations $\partial_\pm$ define LNDs
  on $\KK[X]$ if and only if there exists $e\in M$ such that
  $(\tDD_\pm,\pm e)$ are coherent pairs. Furthermore, $(\tDD_\pm,\pm
  e)$ are coherent pairs if and only if $\te_\pm$ is a root of the
  cone $\tomega_\pm$ and Definition~\ref{col-div} (2)--(4) hold. It is
  a routine verification that Definition~\ref{col-div} (2)--(4) hold
  for $(\tDD_\pm,\pm e)$, and so the theorem is proved.
\end{proof}

\section{Special $\SL$-actions}
\label{sec:special}

In this section we give a classification of special $\SL$-actions on
normal affine varieties. This generalizes Theorem 1 in
\cite{Arz97}. Let us first state the necessary definitions and results
for an arbitrary reductive group.

Let $G$ be a connected reductive algebraic group, $T\subseteq B$ be a
maximal torus and a Borel subgroup of $G$, and $\fX_+(G)$ be the
semigroup of dominant weights of $G$ with respect to the pair
$(T,B)$. Any regular action of the group $G$ on an affine variety $X$
defines a structure of rational $G$-algebra on the algebra of regular
functions $\KK[X]$. In particular, we have the isotypic decomposition
$$\KK[X]=\bigoplus_{\lambda\in\fX_+(G)} \KK[X]_\lambda\,,$$
where $\KK[X]_\lambda$ is the sum of all the simple $G$-submodules in
$\KK[X]$ with the highest weight $\lambda$.

\begin{definition}
  A $G$-action on $X$ is called \emph{special} (or
  \emph{horospherical}), if there exists a dense open $W\subseteq X$
  such that the isotropy group of any point $x\in W$ contains a
  maximal unipotent subgroup of the group $G$.
\end{definition}

\begin{remark}
  If a $G$-action is special, then the isotropy group $G_x$ contains a
  maximal unipotent subgroup for all $x\in X$.
\end{remark}

\begin{theorem}[See {\cite[Theorem~5]{Pop86}}]
  A $G$-action on an affine variety $X$ is special if and only if
  $$\KK[X]_\lambda\cdot \KK[X]_\mu\subseteq \KK[X]_{\lambda+\mu},
\quad \mbox{for all} \quad \lambda,\mu\in\fX_+(G)\,.$$
\end{theorem}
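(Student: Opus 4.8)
The plan is to recast the statement as saying that the isotypic decomposition $\KK[X]=\bigoplus_\lambda\KK[X]_\lambda$ is an \emph{algebra grading} by the weight monoid $\fX_+(G)$ precisely when the action is special. The starting point is purely representation-theoretic: for dominant weights $\lambda,\mu$ the tensor product $V_\lambda\otimes V_\mu$ contains the Cartan component $V_{\lambda+\mu}$ with multiplicity one, and every other irreducible constituent has strictly smaller highest weight. Consequently, with no hypothesis on $X$ at all, one always has $\KK[X]_\lambda\cdot\KK[X]_\mu\subseteq\bigoplus_{\nu\preceq\lambda+\mu}\KK[X]_\nu$, the top term $\nu=\lambda+\mu$ being present whenever both factors are nonzero (the product of two highest weight vectors is again a $B$-eigenvector, of weight $\lambda+\mu$). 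Thus the real content of the theorem is the equivalence between specialness and the vanishing of all the lower terms $\nu\prec\lambda+\mu$.

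For the implication \emph{special $\Rightarrow$ multiplicative}, I would pass to a generic orbit. By specialness a generic orbit is $G\cdot x\cong G/H$ with $H$ containing a maximal unipotent subgroup, and after replacing $x$ by a suitable point of its orbit I may assume this subgroup is the fixed $U$. The key lemma is that for such a horospherical homogeneous space $\KK[G/H]$ is $\fX_+(G)$-graded: the surjection $G/U\twoheadrightarrow G/H$ induces a $G$-equivariant embedding of $\KK[G/H]$ into the coordinate ring of the base affine space $\KK[G/U]=\bigoplus_\lambda V_\lambda$, whose multiplication is given exactly by the Cartan projections $V_\lambda\otimes V_\mu\to V_{\lambda+\mu}$ and is therefore graded; the subalgebra $\KK[G/H]$ inherits the grading. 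Now the restriction $\rho_x\colon\KK[X]\to\KK[G/H]$ is $G$-equivariant, so it carries $\KK[X]_\nu$ into $\KK[G/H]_\nu$; since $\rho_x(\KK[X]_\lambda)\cdot\rho_x(\KK[X]_\mu)\subseteq\KK[G/H]_{\lambda+\mu}$, the $\nu$-isotypic part of $\KK[X]_\lambda\cdot\KK[X]_\mu$ restricts to zero on $G\cdot x$ for every $\nu\neq\lambda+\mu$. As the generic orbits sweep out a dense subset of $X$, these parts vanish identically, yielding $\KK[X]_\lambda\cdot\KK[X]_\mu\subseteq\KK[X]_{\lambda+\mu}$.

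For the converse, \emph{multiplicative $\Rightarrow$ special}, I assume the grading and reconstruct the horospherical structure. Writing $\KK[X]_\lambda\cong V_\lambda\otimes M_\lambda$ with multiplicity space $M_\lambda=\homo_G(V_\lambda,\KK[X])$, the multiplication $\KK[X]_\lambda\otimes\KK[X]_\mu\to\KK[X]_{\lambda+\mu}$ is $G$-equivariant and lands in a single isotypic component; by Schur's lemma together with the multiplicity-one of the Cartan component it must factor as the Cartan projection tensored with a map $M_\lambda\otimes M_\mu\to M_{\lambda+\mu}$. Hence $M=\bigoplus_\lambda M_\lambda=\KK[X]^U$ is a commutative $\fX_+(G)$-graded algebra, and $\KK[X]$ is the horospherical induction of the $T$-variety $\spec\KK[X]^U$ (on which $U$ is made to act trivially and $T$ acts through the grading). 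Such induced varieties are special by construction: the $G\times^B(-)$ presentation exhibits a section fixed by $U$, so a conjugate of $U$ lies in the stabilizer of a generic point; equivalently, the generic $G$-orbit meets the fixed locus $X^U$.

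The main obstacle is precisely this two-way bridge through the coordinate rings of horospherical orbits. In the forward direction it is the assertion that $\KK[G/H]$ is Cartan-graded, which I reduce to the well-known graded structure of the base affine space $\KK[G/U]$. In the reverse direction the delicate point is making the reconstruction rigorous — verifying that the factored multiplication genuinely realizes $\spec\KK[X]^U$ as $\KK[X]^U$ with the correct $T$-action (finite generation follows since $U$ is a Grosshans subgroup) and that the resulting $G$-variety, after the necessary affinization, has generic isotropy containing a maximal unipotent. Once these geometric identifications are in place, both implications follow from the formal representation-theoretic dichotomy recorded in the first paragraph.
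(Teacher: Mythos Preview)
The paper does not contain a proof of this theorem; it is quoted verbatim from \cite[Theorem~5]{Pop86} and used as a black box, so there is no ``paper's own proof'' to compare against.

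As to your argument itself: the forward implication is sound. Restricting to a generic orbit $G\cdot x\cong G/H$ with $U\subseteq H$, embedding $\KK[G/H]$ into the Cartan-graded algebra $\KK[G/U]$, and then using density of the union of generic orbits to kill the sub-Cartan isotypic pieces is exactly the right mechanism. The reverse implication is correct in outline but, as you yourself flag, the passage from ``$\KK[X]$ is the Cartan-graded module $\bigoplus V_\lambda\otimes M_\lambda$ with multiplication factoring through the Cartan projection'' to ``a generic stabiliser contains $U$'' is left at the level of a slogan (``horospherical induction'', ``$G\times^B(-)$''). One clean way to close it: the grading hypothesis yields a $G$-equivariant surjection $\overline{G/U}\times^{T}\spec\KK[X]^U\twoheadrightarrow X$ (matching Cartan components on the level of coordinate rings), and on the source the generic $G$-stabiliser visibly contains a conjugate of $U$; alternatively, one checks directly that the evaluation map at a generic $x$ kills no $U$-highest-weight vector, so $x\in X^{U'}$ for some conjugate $U'$ of $U$. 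Either completion is short, but some such sentence is needed to turn your sketch into a proof.
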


\begin{corollary} \label{cor:torus-S} %
  For a special action, the isotypic decomposition is a
  $\fX_+(G)$-grading on the algebra $\KK[X]$. This defines an action
  of an algebraic torus $S$ on $X$, and this action commutes with the
  $G$-action.
\end{corollary}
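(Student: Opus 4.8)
The plan is to extract the torus action directly from the $\fX_+(G)$-grading that the preceding theorem provides, and then to verify that this torus action commutes with the $G$-action by checking commutation on homogeneous functions.

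First I would observe that Popov's theorem (the immediately preceding statement) tells us precisely that for a special $G$-action the product $\KK[X]_\lambda\cdot\KK[X]_\mu$ lands inside $\KK[X]_{\lambda+\mu}$. Combined with the isotypic decomposition $\KK[X]=\bigoplus_{\lambda\in\fX_+(G)}\KK[X]_\lambda$, this says exactly that the isotypic decomposition is compatible with multiplication, i.e. it is a grading of the algebra $\KK[X]$ by the semigroup $\fX_+(G)$. Here I would note that $\fX_+(G)$ is a finitely generated submonoid of the character lattice $\fX(T)$, so it generates a sublattice $\Lambda\subseteq\fX(T)$; the grading by $\fX_+(G)$ extends to a $\Lambda$-grading (with trivial components outside $\fX_+(G)$). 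A $\Lambda$-grading on $\KK[X]$ is the same thing as an action of the algebraic torus $S=\spec\KK[\Lambda]$ on $X=\spec\KK[X]$, via the comorphism sending a homogeneous element $f\in\KK[X]_\lambda$ to $f\otimes\chi^\lambda$. This is the torus $S$ in the statement.

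Next I would verify that the $S$-action commutes with the $G$-action. The key point is that each isotypic component $\KK[X]_\lambda$ is a $G$-submodule of $\KK[X]$ by construction, hence it is $G$-invariant; meanwhile $\KK[X]_\lambda$ is by definition the $S$-weight space for the weight $\lambda$. So $G$ preserves every $S$-weight space, and therefore the $G$-action and the $S$-action preserve the same decomposition of $\KK[X]$ into simultaneous weight spaces. On each weight space $\KK[X]_\lambda$ the torus $S$ acts by the single scalar $\chi^\lambda$, which is a $G$-equivariant scalar multiplication since it is central in $\operatorname{End}(\KK[X]_\lambda)$; hence $s\cdot(g\cdot f)=g\cdot(s\cdot f)$ for all $s\in S$, $g\in G$, and homogeneous $f$. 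Since homogeneous elements span $\KK[X]$, the two actions commute on all of $\KK[X]$, and passing back to $X=\spec\KK[X]$ gives that the $S$-action commutes with the $G$-action.

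The only genuine subtlety, and the step I expect to require the most care, is checking that the semigroup grading indeed defines an \emph{algebraic} torus action rather than merely a formal grading: one must confirm that the $\Lambda$-grading is by a finitely generated free abelian group and that the comorphism $\KK[X]\to\KK[X]\otimes\KK[\Lambda]$ is a well-defined algebra homomorphism respecting the coalgebra axioms. Both follow formally once one knows the grading is multiplicative (from Popov's theorem) and that $\fX_+(G)$ generates a lattice of finite rank (which holds because $T$ has finite rank); so in fact there is no serious obstacle, and the corollary is a direct translation of the preceding theorem into the language of torus actions. I would phrase the argument so that the reader sees the two points—"multiplicative grading $=$ torus action" and "$G$ preserves weight spaces $\Rightarrow$ commutation"—as the entire content.
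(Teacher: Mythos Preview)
Your argument is correct and is precisely the natural way to unpack this corollary; the paper itself gives no proof at all, treating the statement as an immediate consequence of Popov's theorem. Your two key points---that a multiplicative $\fX_+(G)$-grading yields an algebraic torus action, and that $G$-stability of each isotypic component forces commutation with the scalar $S$-action---are exactly what is implicit in the paper's formulation, so there is nothing to compare.
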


Furthermore, since $S$ acts on every isotypic component by scalar
multiplication, every $G$-invariant subspace in $\KK[X]$ is
$S$-invariant. In particular, $S$ preserves every $G$-invariant ideal
in $\KK[X]$, and thus every $G$-invariant closed subvariety in
$X$. This shows that the torus $S$ preserves all $G$-orbit closures on
$X$.

We return now to the case of $\SL$-actions on $\TT$-varieties.

\begin{proposition}\label{prp:fiber-special}
  Every compatible $\SL$-action of fiber type on an affine
  $\TT$-variety $X$ is special.
\end{proposition}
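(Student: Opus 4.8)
The plan is to deduce specialness from Popov's criterion, which (by the theorem of Popov quoted just above the statement) asserts that the $\SL$-action is special if and only if the isotypic decomposition $\KK[X]=\bigoplus_{n\in\ZZ_{\geq 0}}\KK[X]_n$ is a grading, i.e. $\KK[X]_{n_1}\cdot\KK[X]_{n_2}\subseteq\KK[X]_{n_1+n_2}$, where we identify $\fX_+(\SL)$ with $\ZZ_{\geq 0}$ and $\KK[X]_n$ is the isotypic component of the simple module $V(n)$. By Corollary~\ref{cor-downg}$(i)$ and Theorem~\ref{sec:sl-fiber} I may work with the explicit data: an $\SL$-root $e$, the distinguished rays $\rho_{\pm e}$, the locally nilpotent derivations $\partial_+=\varphi\partial_e$ and $\partial_-=\varphi^{-1}\partial_{-e}$, and the infinitesimal generator $\delta(f\chi^m)=\langle m,p\rangle f\chi^m$ with $p=\rho_{-e}-\rho_e$.

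The heart of the argument is to show that every $M$-homogeneous component $A_m\chi^m$ lies inside a single isotypic component. Here the hypothesis of \emph{fiber type} is essential: by Theorem~\ref{lnd-fiber} the operators $\partial_\pm$ act on $A_m\chi^m$ merely by multiplication by the scalar $\langle m,\rho_{\pm e}\rangle$ followed by the shift $m\mapsto m\pm e$ and multiplication by $\varphi^{\pm 1}$; they do not differentiate inside the fibre. Consequently, for $f\in A_m$ the whole $\slt$-module generated by $f\chi^m$ is supported on the single string $\{m+je\}_{j\in\ZZ}$, each $M$-degree carrying a one-dimensional weight space (a scalar multiple of $\varphi^j f\chi^{m+je}$). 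I then raise $f\chi^m$ towards the facet $\tau_e$ dual to $\rho_e$: since $\langle m,\rho_e\rangle\geq 0$ on $\sigma^\vee_M$, after $\langle m,\rho_e\rangle$ applications of $\partial_+$ one reaches a nonzero highest weight vector at $M$-degree $m+\langle m,\rho_e\rangle e$, of weight $n(m):=\langle m,\rho_e+\rho_{-e}\rangle$. As the generated module has at most one-dimensional weight spaces and contains this highest weight vector, it is irreducible of type $V(n(m))$, so $A_m\chi^m\subseteq\KK[X]_{n(m)}$; comparing the two direct-sum decompositions of $\KK[X]$ then gives $\KK[X]_n=\bigoplus_{n(m)=n}A_m\chi^m$.

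The decisive feature of $n(m)=\langle m,\rho_e+\rho_{-e}\rangle$ is that it is linear in $m$ and nonnegative on $\sigma^\vee_M$ (because $\rho_e+\rho_{-e}\in\sigma$). Hence $n(m_1+m_2)=n(m_1)+n(m_2)$, and since $A_{m_1}\chi^{m_1}\cdot A_{m_2}\chi^{m_2}\subseteq A_{m_1+m_2}\chi^{m_1+m_2}$ I obtain $A_{m_1}\chi^{m_1}\cdot A_{m_2}\chi^{m_2}\subseteq\KK[X]_{n(m_1)+n(m_2)}$. Summing over the relevant $M$-degrees yields $\KK[X]_{n_1}\cdot\KK[X]_{n_2}\subseteq\KK[X]_{n_1+n_2}$, which is exactly Popov's criterion, and the proposition follows.

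I expect the main obstacle to be the key step of the second paragraph, namely the verification that each $A_m\chi^m$ is isotypic; this is precisely the place where fiber type (as opposed to horizontal type) is indispensable, since a horizontal $\partial_\pm$ would act by differentiation along the base and could mix a single $M$-component across several isotypic components. The remaining points --- the linearity of $n(m)$ and the bookkeeping with the two gradings --- are routine. Finally, the possibility that the action factors through $\PSL$ (the non-effective case of Proposition~\ref{sl-actions}) causes no difficulty, as specialness is detected by the isotypic grading irrespective of effectivity.
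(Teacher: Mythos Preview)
Your argument is correct and genuinely different from the paper's. The paper gives a purely geometric proof: for general $x$ the closure $\overline{\SL\cdot x}$ sits inside a $\TT$-orbit closure, so its stabiliser in $\TT$ acts with an open orbit; passing to the codimension-one subtorus that centralises $\SL$ forces $\rank\big(N_{\SL}(H)/H\big)\geq\dim(\SL/H)-1$, and running through the short list of positive-dimensional subgroups of $\SL$ leaves only the case where $H$ contains a maximal unipotent. No use is made of the classification Theorem~\ref{sec:sl-fiber}. Your route instead \emph{uses} that classification to write down $\partial_\pm$ explicitly and then verifies Popov's multiplicativity criterion by showing $A_m\chi^m\subseteq\KK[X]_{n(m)}$ with $n(m)=\langle m,\rho_e+\rho_{-e}\rangle$. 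This buys an explicit formula for the isotypic components, $\KK[X]_n=\bigoplus_{\langle m,\rho_e+\rho_{-e}\rangle=n}A_m\chi^m$, which the paper's proof does not yield; conversely, the paper's proof is self-contained and does not need the fiber-type classification as an input.

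One small point of precision: your sentence ``the generated module has at most one-dimensional weight spaces and contains this highest weight vector, it is irreducible'' is not literally sufficient (e.g.\ $V(0)\oplus V(1)$ has one-dimensional weight spaces). What makes it work here is that all weights on the string $\{m+je\}$ have the same parity $\langle m,p\rangle\bmod 2$; equivalently, the highest-weight submodule $V(n(m))$ already fills the one-dimensional weight space at the weight $\langle m,\rho_{-e}\rangle-\langle m,\rho_e\rangle$ of $f\chi^m$, so $f\chi^m\in V(n(m))$ and the cyclic module collapses to $V(n(m))$. With that one line added, your proof is complete.
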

\begin{proof}
  For a general $x\in X$, let $Y=\overline{\SL\cdot x}$. Then
  $Y\subseteq \overline{\TT\cdot x}$. Denote by $\TT_Y$ the stabilizer
  of the subvariety $Y$ in $\TT$. Since the torus $\TT$ normalizes the
  $\SL$-action, it permutes $\SL$-orbit closures, and thus $\TT_Y$
  acts on $Y$ with an open orbit.

  Since $\TT_Y$ also normalizes the $\SL$-action, there exists a
  subtorus $S_Y\subseteq \TT_Y$ of codimension 1 that centralizes the
  $\SL$-action on $Y$. In particular, it preserves the open orbit
  $\SL\cdot x\hookrightarrow Y$. But $\SL\cdot x\simeq \SL/H$, where
  $H$ is the isotropy group of $x$ in $\SL$. We have $S_Y\subseteq
  \Aut_{\SL}(\SL/H)$ and thus
  $$\rank \Aut_{\SL}(\SL/H)\geq \dim(\SL/H)-1\,.$$
  But $\Aut_{\SL}(\SL/H)\simeq N_{\SL}(H)/H$ and $\rank
  N_{\SL}(H)/H\leq 1$, so $\dim(\SL/H)\leq 2$. If $H$ coincides either
  with a maximal torus or with its normalizer in $\SL$, then the group
  $N_{\SL}(H)/H$ is finite, a contradiction. So, $H$ is a finite
  extension of a maximal unipotent subgroup of $\SL$, and the
  $\SL$-action is special. 
\end{proof}

\begin{corollary}\label{cor:tor-special}
Every compatible $\SL$-action on a toric variety is special.  
\end{corollary}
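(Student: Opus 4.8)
The plan is to deduce this directly from Proposition~\ref{prp:fiber-special} by observing that on a toric variety every $\SL$-action is forced to be of fiber type. First I would recall that an affine toric variety $X$ carries a dense open orbit of its big torus $\TT$, i.e.\ it has complexity zero. Hence for a general point $x\in X$, which we may take to lie in this open orbit, we have $\overline{\TT\cdot x}=X$, and therefore the orbit $\SL\cdot x$ is trivially contained in the $\TT$-orbit closure $\overline{\TT\cdot x}$. By definition this means the $\SL$-action is of fiber type; this is exactly the remark made at the outset of the subsection on $\SL$-actions on toric varieties.

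Once the action is known to be of fiber type, the result is immediate. Assuming the $\SL$-action is compatible, Proposition~\ref{prp:fiber-special} applies verbatim (with $\TT$ taken to be the big torus of $X$) and shows that the action is special. This completes the argument.

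There is no real obstacle in this proof: it is a pure corollary, and the single substantive point --- that compatibility together with the existence of an open $\TT$-orbit forces the fiber-type hypothesis of Proposition~\ref{prp:fiber-special} --- follows at once from $\overline{\TT\cdot x}=X$ for general $x$.
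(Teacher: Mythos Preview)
Your proposal is correct and matches the paper's own reasoning: the corollary is stated without a separate proof, as it follows immediately from Proposition~\ref{prp:fiber-special} together with the observation (made at the start of the subsection on $\SL$-actions on toric varieties) that the open $\TT$-orbit forces every compatible $\SL$-action on a toric variety to be of fiber type.
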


In the following proposition we come to a partial converse of
Proposition~\ref{prp:fiber-special}. Namely, we realize any special
action of $\SL$ as a compatible action of fiber type with respect to a
canonical 2-dimensional torus action.

\begin{proposition}\label{lm:SU}
  Every special $\SL$-variety admits the action of a 2-dimensional
  torus such that the $\SL$-action is compatible with the torus action
  and of fiber type.
\end{proposition}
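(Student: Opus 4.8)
The plan is to take the canonical commuting torus $S$ produced by Corollary~\ref{cor:torus-S} and enlarge it by a maximal torus of $\SL$. For $G=\SL$ the semigroup of dominant weights is $\fX_+(\SL)\cong\NN$, so the isotypic grading gives a one-dimensional torus $S$ (assuming, as we may, that the $\SL$-action is nontrivial) acting on $X$ and commuting with $\SL$, with $S$ acting on the component $\KK[X]_n$ through the character $n$. Let $T\subseteq\SL$ be a maximal torus. Since $S$ centralizes $\SL$ while $T\subseteq\SL$, the subgroups $T$ and $S$ commute and $T\cap S$ lies in the finite center of $\SL$; hence $\TT:=T\cdot S$ is a two-dimensional torus acting on $X$. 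For compatibility I would argue as in the discussion preceding Corollary~\ref{cor-downg}: $T$ induces inner automorphisms of the image of $\SL$ in $\Aut(X)$ and acts nontrivially on the root subgroups $U_\pm$, so $T$ normalizes but does not centralize $\SL$; as $S$ centralizes $\SL$, the product $\TT$ normalizes but does not centralize $\SL$, so the $\SL$-action is compatible with $\TT$.

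The remaining and substantive point is that the $\SL$-action is of fiber type with respect to $\TT$, and for this I first record two dimension counts. By specialness the isotropy of a general point contains a maximal unipotent subgroup $U\cong\GA$, so a general orbit is $\SL/H$ with $H\supseteq U$ and $\dim\SL\cdot x\le 2$. Orbits in an affine variety are quasi-affine, whereas any $\SL/H$ with $U\subseteq H$ and $\dim H=2$ has identity component $\SL/B\cong\PP^1$ and so is complete; thus there are no one-dimensional orbits, and for a nontrivial action the general $\SL$-orbit is exactly two-dimensional. On the torus side I would use the $\TT$-grading of $\KK[X]$ by pairs $(n,k)$, where $n$ is the $S$-weight (the isotypic index) and $k$ the $T$-weight: any nontrivial component $V(n)$ contributes the weights $(n,n)$ and $(n,n-2)$, which generate a subgroup of rank two, so the general $\TT$-orbit is two-dimensional as well.

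The heart of the matter is to show $\SL\cdot x\subseteq\overline{\TT\cdot x}$ for general $x$. The key observation is that $S$ acts trivially not only on $\KK[X]^{\SL}=\KK[X]_0$ but on the whole field $\KK(X)^{\SL}$: since $\SL$ has no nontrivial characters one has $\KK(X)^{\SL}=\fract(\KK[X]^{\SL})$, and $S$ acts trivially on $\KK[X]^{\SL}=\KK[X]_0$ by definition of $S$. By Rosenlicht's theorem there is a dense $\SL$-invariant open subset on which the orbits are separated by $\KK(X)^{\SL}$; because $S$ commutes with $\SL$ and fixes every rational invariant, it must carry each such orbit into itself, so $S\cdot x\subseteq\SL\cdot x$ for general $x$. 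As $T\subseteq\SL$, this yields $\TT\cdot x=T\cdot(S\cdot x)\subseteq\SL\cdot x$. Comparing the dimensions above, $\dim\TT\cdot x=2=\dim\SL\cdot x$, so $\TT\cdot x$ is dense in $\SL\cdot x$; hence $\SL\cdot x\subseteq\overline{\TT\cdot x}$ and the action is of fiber type.

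The construction of $\TT$ and the verification of compatibility are routine; the step I expect to require the most care is the fiber-type assertion. Its crux is the fact that $S$ acts trivially on the field of $\SL$-invariants, so that $S$ preserves the general orbit; once this is in place the conclusion follows from the two dimension counts, together with the (needed) fact that an affine $\SL$-variety has no one-dimensional orbits. The degenerate case of a trivial $\SL$-action, for which the statement is vacuous, should be noted separately.
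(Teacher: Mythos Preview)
Your proof is correct and follows the same overall strategy as the paper: form $\TT^2=T\cdot S$ with $S$ the canonical torus from Corollary~\ref{cor:torus-S}, and show that $\TT^2$ acts with a dense orbit on each general $\SL$-orbit closure. The paper's proof is only three lines and takes for granted most of the points you spell out (compatibility, that $\TT^2$ is genuinely two-dimensional, the dimension counts).

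The one genuine difference is in how you show that $S$ preserves $\SL$-orbits. The paper uses the observation recorded immediately after Corollary~\ref{cor:torus-S}: since $S$ acts by scalars on each isotypic component, it preserves every $\SL$-invariant subspace of $\KK[X]$, hence every $\SL$-invariant ideal, hence every $\SL$-orbit closure. You instead argue that $S$ fixes $\KK(X)^{\SL}=\fract(\KK[X]^{\SL})$ and then invoke Rosenlicht. Both are valid, but the paper's route is more elementary: it avoids the (true but nontrivial) identity $\KK(X)^{\SL}=\fract(\KK[X]^{\SL})$ and Rosenlicht's theorem, working directly with the isotypic grading you already have in hand. Your detour through rational invariants buys nothing extra here, so you might prefer the ideal-preservation argument.

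A small wording issue: in ``any $\SL/H$ with $U\subseteq H$ and $\dim H=2$ has identity component $\SL/B\cong\PP^1$'' you presumably mean that $H^\circ=B$ (since $B$ is the unique two-dimensional connected subgroup containing $U$) and hence $\SL/H$ is a finite quotient of $\SL/B\cong\PP^1$, so itself complete. The conclusion is right; just tighten the phrasing.
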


\begin{proof}
  Let $\TT^2$ be the 2-dimensional torus $T\cdot S$, where $T$ is a
  maximal torus in $\SL$ and $S$ is one constructed in
  Corollary~\ref{cor:torus-S}. By construction, the actions of $T$ and
  $S$ on $X$ commute, preserve every $\SL$-orbit closure and $\TT^2$
  has an open orbit on every such orbit closure.
\end{proof}

In the following proposition we determine the special $\SL$-actions
among the compatible $\SL$-actions on a complexity one affine
$\TT$-variety.

\begin{proposition} \label{prop:non-special} %
  Let $X$ be a normal affine $\TT$-variety of complexity one endowed
  with a compatible $\SL$-action. Then the $\SL$-action is special if
  and only if it is either of fiber type; or it is of horizontal type, $X$ is
  toric, and the $\SL$-action is compatible with the big torus. In
  particular, the $\TT$-varieties of complexity one that admit a
  non-special compatible $\SL$-action are given in
  Theorem~\ref{th:hor}.
\end{proposition}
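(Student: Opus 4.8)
The plan is to prove the two implications separately; the forward implication is quick, and the whole difficulty sits in the converse, which I would reduce to a single dichotomy. For the \emph{if} direction: if the compatible $\SL$-action is of fiber type it is special by Proposition~\ref{prp:fiber-special}; and if it is of horizontal type with $X$ toric and the action compatible with the big torus, then it is special by Corollary~\ref{cor:tor-special} applied to $X$ regarded as a toric variety. So only the converse carries content.

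For the \emph{only if} direction, assume the action is special. By Lemma~\ref{lm-unmix} it is either of fiber type, which is the first alternative and needs nothing further, or of horizontal type, which is the case I would analyze. Here the key input is Proposition~\ref{lm:SU}: a special $\SL$-action carries a canonical central torus $S$, arising from the isotypic grading of Corollary~\ref{cor:torus-S}, and $S$ is $1$-dimensional because $\fX_+(\SL)\cong\ZZ_{\ge 0}$; moreover the $\SL$-action is of fiber type with respect to the $2$-torus $T\cdot S$, where $T$ is the maximal torus of $\SL$ determined by $\delta$. What I would exploit is that this $T$ already lies in $\TT$ by Corollary~\ref{cor-downg}$(i)$, while $S$ centralizes $\SL$ and may or may not lie in $\TT$; the argument then splits according to whether $S\subseteq\TT$.

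In the first subcase I would observe that $S\subseteq\TT$ gives $T\cdot S\subseteq\TT$, so that each $(T\cdot S)$-orbit closure sits inside a $\TT$-orbit closure; combined with fiber type for $T\cdot S$, this forces the general $\SL$-orbits into $\TT$-orbit closures, i.e. fiber type with respect to $\TT$, contradicting the horizontal hypothesis. Hence $S\not\subseteq\TT$, and I would note that $\TT':=\TT\cdot S$ is then a torus acting effectively on $X$ of rank $\rank\TT+1=\dim X$ (using complexity one). To conclude that $X$ is toric I would use that an effective torus action on an irreducible variety of the same dimension has a dense orbit---because for a torus action the generic stabilizer equals the global kernel---so $X$ is toric for $\TT'$; and since $\TT\subseteq\TT'$ still normalizes but does not centralize $\SL$, the action is compatible with this bigger torus, giving the second alternative. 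I expect the main obstacle to be precisely this last step: pinning down that $\TT'$ acts with rank $\dim X$ and effectively, and hence yields a genuine toric structure, rather than the softer identifications elsewhere.

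For the closing sentence, I would observe that in complexity one a compatible $\SL$-action is compatible with a strictly larger torus exactly when $X$ is toric, by the same dimension count; therefore a non-special compatible $\SL$-action is precisely a horizontal one that is not compatible with any bigger torus, and these are the actions classified in Theorem~\ref{th:hor}.
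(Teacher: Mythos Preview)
Your argument follows the same line as the paper's: in the horizontal special case, bring in the canonical one-dimensional torus $S$ from the isotypic grading, argue that $S\not\subseteq\TT$ (else the action would be of fiber type), and enlarge $\TT$ by $S$ to a torus of full rank. There is one point you pass over that the paper makes explicit: before forming $\TT'=\TT\cdot S$ as a torus you must know that $S$ commutes with $\TT$. The paper's justification is that, by Corollary~\ref{cor-downg}$(i)$, $\TT=T\cdot T'$ with $T\subseteq\SL$ and $T'$ centralizing $\SL$, so $\TT$ preserves each $\SL$-isotypic component of $\KK[X]$; since $S$ acts by scalars on every such component, it commutes with anything preserving the decomposition, hence with $\TT$. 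With this filled in, your proof and the paper's coincide; in fact you are slightly more thorough in invoking Corollary~\ref{cor:tor-special} for the \emph{if} direction in the toric case, which the paper leaves implicit.
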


\begin{proof}
  If the $\SL$-action is of fiber type, then the proposition follows
  from Proposition \ref{prp:fiber-special}.

  Assume that the $\SL$-action is of horizontal type and
  special. Since the $\SL$-action is compatible, $\TT$ is a product of
  a maximal torus $T$ of $\SL$ and a subtorus $T'$ which commutes
  with the $\SL$-action. In particular, $\TT$ preserves all the
  $\SL$-isotypic components in $\KK[X]$.

  On the other hand, the one-dimensional torus $S$ constructed in
  Corollary~\ref{cor:torus-S} acts on any isotypic component by a
  scalar multiplication. Thus $S$ commutes with $\TT$. We know that
  general closures of the canonical 2-torus $(T\cdot S)$-orbits
  coincide with closures of $\SL$-orbits (see
  Proposition~\ref{lm:SU}). Since the $\SL$-action is compatible, $T$
  is contained in $\TT$ and since the $\SL$-action is of horizontal,
  the closures of $\SL$-orbits are not contained in the closures of
  the $\TT$-orbits. Hence $S$ is not contained in $\TT$, so we may
  extent $\TT$ by $S$ and get a big torus which acts on X with an open
  orbit.
\end{proof}

\smallskip

For the rest of this section we let $\TT^2$ be a 2-dimensional
algebraic torus. In the following, we give a description of compatible
$\SL$-actions of fiber type on $\TT^2$-varieties. By Propositions
\ref{prp:fiber-special} and \ref{lm:SU}, this gives a description of
all special $\SL$-actions on normal affine varieties.

The following example gives a construction of certain
$\TT^2$-varieties admitting a compatible $\SL$-action of fiber type.

\begin{example}\label{ex:T-var-U}
  Let $M$ be a lattice of rank 2, and $\sigma$ be the cone spanned in
  $N_\QQ$ by the vectors $(1,0)$ and $(r-1,r)$, for some
  $r\in\ZZ_{>0}$. By Example~\ref{ex:toric-2} the cone $\sigma$
  admits the $\SL$-root $e=(1,-1)$.

  We also fix a semiprojective variety $Y$ and an ample $\QQ$-Cartier
  divisor $H$ on $Y$. Consider the $\sigma$-polyhedral divisor given
  by $\DD=\Delta\cdot H$, where $\Delta=(1,1)+\sigma$. The
  $\sigma$-polyhedral divisor $\DD$ is proper since for every
  $(m_1,m_2)\in\sigma^\vee_M\setminus \{0\}$ the evaluation divisor is
  given by $\DD(m_1,m_2)=(m_1+m_2)\cdot H$ and $m_1+m_2>0$.

  We have $\DD(e)=\DD(-e)=0$ and so Theorem~\ref{sec:sl-fiber} yields that
  the $\TT^2$-variety $X=X[Y,\DD]$ admits an $\SL$-action of fiber
  type and the generic isotropy subgroup in $\SL$ is $U_{(r)}$.

  Furthermore, if we let $X'$ be the $\TT^2$-variety obtained with the
  above construction with the data $Y$ and $H$ replaced by $Y'$ and
  $H'$, then $X$ is isomorphic to $X'$ if and only if $Y\simeq Y'$ and
  under this isomorphism $H$ is linearly equivalent to $H'$. Indeed,
  since $H$ is ample \cite[Proposition~3.3]{Dem88} implies that $Y$ is
  unique up to isomorphism. Finally, Corollary~\ref{cor:AH} shows that
  $H$ and $H'$ are linearly equivalent.
\end{example}

\begin{proposition} \label{lm:T2-ex}
  Every $\TT^2$-variety $X$ endowed with an $\SL$-action of fiber type
  is isomorphic to one in Example~\ref{ex:T-var-U} above.
\end{proposition}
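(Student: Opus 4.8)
The plan is to reduce $X$ to its Altmann--Hausen data and to normalise that data step by step until it matches Example~\ref{ex:T-var-U}. First I would invoke Theorem~\ref{AH} to write $X=X[Y,\DD]$ with $\rank M=2$, $Y$ semiprojective and $\DD$ proper. A fiber-type $\SL$-action is in particular compatible, so by Theorem~\ref{sec:sl-fiber} it is given by an $\SL$-root $e\in\RR(\sigma)$ with $\DD(e)$ principal and $\DD(e)+\DD(-e)=0$. Because $\rank M=2$, Remark~\ref{rk:fiber}$(2)$ forces every coefficient of $\DD$ to be $\Delta_Z=v_Z+\sigma$ with a single vertex $v_Z\in N_\QQ$, so that $\DD=\sum_Z(v_Z+\sigma)\cdot Z$.

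Next I would normalise the cone. Since $e$ is an $\SL$-root, the relations $\langle e,\rho_e\rangle=-1$, $\langle e,\rho_{-e}\rangle=1$ and $\langle e,\rho\rangle=0$ for any other ray show that $\sigma$ is two-dimensional with exactly the two rays $\rho_{\pm e}$. Applying the lattice change of Example~\ref{ex:toric-2} (a lattice automorphism of $N$ induces an $\SL$-equivariant isomorphism of $\TT^2$-varieties) I may assume $\sigma=\cone((1,0),(r-1,r))$ and $e=(1,-1)$, so that $\rho_{-e}=(1,0)$, $e^\bot=\QQ\cdot(1,1)$ and $(1,1)\in\relint(\sigma)$.

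The crucial step is to move every vertex onto the line $e^\bot$. As $\DD(e)=\sum_Z\langle e,v_Z\rangle\cdot Z$ is principal, it is integral, so $a_Z:=\langle e,v_Z\rangle\in\ZZ$. Putting $w_Z=a_Z\cdot(1,0)\in N$ one computes $\sum_Z\langle m,w_Z\rangle\cdot Z=m_1\cdot\DD(e)$, which is principal for every $m=(m_1,m_2)\in\sigma^\vee_M$ (here $m_1=\langle m,\rho_{-e}\rangle\geq 0$). Hence Corollary~\ref{cor:AH} lets me replace each vertex $v_Z$ by $v_Z'=v_Z-w_Z$ without changing $X$. Now $\langle e,v_Z'\rangle=0$, i.e. $v_Z'=c_Z\cdot(1,1)$ for some $c_Z\in\QQ$, and I may assume $\DD=\sum_Z(c_Z(1,1)+\sigma)\cdot Z=\Delta\cdot H$ with $\Delta=(1,1)+\sigma$ and $H=\sum_Z c_Z\cdot Z$.

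Finally I would establish that $H$ is ample after contracting $Y$. For $m\in\sigma^\vee$ the minimum defining $\DD(m)$ is attained at the vertex, giving $\DD(m)=(m_1+m_2)\cdot H$, and evaluating properness of $\DD$ at $(1,1)\in\relint(\sigma^\vee)$ shows that $H$ is $\QQ$-Cartier, semiample and big. The hard part will be the passage to the ample model: the semiample fibration contracts $Y$ via a morphism $\pi\colon Y\to Y'$ onto a semiprojective $Y'$ with $H=\pi^*H'$ for an ample $H'$, and since $\pi_*\OO_Y=\OO_{Y'}$ one obtains $H^0(Y,\OO((m_1+m_2)H))=H^0(Y',\OO((m_1+m_2)H'))$ for all $m$, whence $X\cong X[Y',\Delta\cdot H']$ is of the form of Example~\ref{ex:T-var-U}. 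Justifying this contraction within the semiprojective framework, and handling the degenerate case where $H$ is big only relative to the affine base, is exactly the point at which \cite[Proposition~3.3]{Dem88} (already invoked in Example~\ref{ex:T-var-U}) enters.
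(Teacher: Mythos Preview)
Your proof is correct and follows essentially the same route as the paper's own argument: normalise $\sigma$ via Example~\ref{ex:toric-2}, use Remark~\ref{rk:fiber}(2) to get single-vertex coefficients, shift the vertices onto $e^\bot$ via Corollary~\ref{cor:AH} using the principality of $\DD(e)$, and then pass to the ample model by invoking \cite[Proposition~3.3]{Dem88}. You merely spell out the shift (constructing the integral $w_Z$) and the contraction step in more detail than the paper does.
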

\begin{proof}
  Let $X=X[Y,\DD]$ where $\DD=\sum_Z \Delta_Z\cdot Z$ is a proper
  $\sigma$-polyhedral divisor on a semiprojective variety $Y$. Since
  $X$ is endowed with an $\SL$-action of fiber type the cone $\sigma$
  admits an $\SL$-root. By Example~\ref{ex:toric-2} we can assume that
  $\sigma$ is the cone spanned in $N_\QQ$ by the vectors $(1,0)$ and
  $(r-1,r)$. In this case $e=(1,-1)$.

  By Remark~\ref{rk:fiber} (2) the $\sigma$-polyhedra $\Delta_Z$ is
  $v_Z+\sigma$, where $v_Z\in N_\QQ$. The divisor $\DD(e)$ is principal
  by Theorem~\ref{sec:sl-fiber} and is given by
  $$\DD(e)=\sum_Z \langle e,v_Z\rangle\cdot Z\,.$$
  Furthermore, by Corollary~\ref{cor:AH} we can assume that $\DD(e)=0$,
  so that for every $Z$
  $$v_Z=\alpha_Z(1,1)\quad \mbox{for some}\quad \alpha_Z\in \QQ\,.$$
  Letting $H=\DD((1,0))=\sum_Z\alpha_Z\cdot Z$ we obtain that
  $\DD=\Delta\cdot H$, where $\Delta=(1,1)+\sigma$. Recall that the
  divisor $H$ is semiample and big but not necessarily
  ample. Nevertheless, by \cite[Proposition 3.3]{Dem88} the
  combinatorial data $(Y,\DD)$ may be chosen so that $H$ is ample.
\end{proof}

The following theorem is a direct consequence of
Proposition~\ref{lm:SU} and Proposition~\ref{lm:T2-ex}.

\begin{theorem} \label{th:special} %
  Every normal affine variety $X$ of dimension $k+2$ endowed with a
  special $\SL$-action is uniquely determined by a positive integer
  $r$, a semiprojective variety $Y$ of dimension $k$, and a linear
  equivalence class $[H]$ of ample $\QQ$-Cartier divisors on $Y$.
\end{theorem}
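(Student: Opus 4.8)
The plan is to read this off the two preceding propositions for the existence part and to verify the canonicity of all the constructions for the uniqueness part. Before either, I would settle the dimension bookkeeping: the canonical torus of Proposition~\ref{lm:SU} is a $\TT^2$, so its lattice $M$ has rank $2$, and writing $X=X[Y,\DD]$ via Theorem~\ref{AH} gives $\dim X=\rank M+\dim Y=2+\dim Y$. Since $\dim X=k+2$ by hypothesis, this forces $\dim Y=k$, matching the statement.

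For existence I would start with a normal affine variety $X$ carrying a special $\SL$-action. By Proposition~\ref{lm:SU}, $X$ admits the action of the $2$-torus $\TT^2=T\cdot S$, where $T$ is a maximal torus of $\SL$ and $S$ is the torus attached to the isotypic $\fX_+(\SL)$-grading in Corollary~\ref{cor:torus-S}, and with respect to $\TT^2$ the $\SL$-action is compatible and of fiber type. Proposition~\ref{lm:T2-ex} then identifies $X$ $\TT^2$-equivariantly with a variety of the shape described in Example~\ref{ex:T-var-U}: one has $X\cong X[Y,\DD]$ with $\DD=\Delta\cdot H$, $\Delta=(1,1)+\sigma$, $\sigma$ spanned by $(1,0)$ and $(r-1,r)$, and $H$ an ample $\QQ$-Cartier divisor on $Y$. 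This already produces the triple $(r,Y,[H])$.

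The uniqueness is where the real content lies, and I would argue that each of the three data is intrinsic to $(X,\SL)$. The torus $S$ is canonically attached to the special action through the isotypic decomposition of Corollary~\ref{cor:torus-S}, independent of any choice. Any two maximal tori $T,T'\subseteq\SL$ are conjugate by some $g\in\SL$, and since $\SL$ acts on $X$ this conjugation is realized by the automorphism $g$ of $X$; hence the resulting $\TT^2$-variety structure, and with it the identification furnished by Example~\ref{ex:T-var-U}, is well defined up to $\SL$-equivariant isomorphism. The integer $r$ is the order of the generic $\SL$-stabilizer modulo a maximal unipotent subgroup: by the argument in the proof of Proposition~\ref{prp:fiber-special} this stabilizer is a finite extension of a maximal unipotent subgroup, namely $U_{(r)}$, and $|U_{(r)}/U_+|=r$ is an invariant of the action. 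Finally, once $\TT^2$ is fixed, the uniqueness statement recorded at the end of Example~\ref{ex:T-var-U}, which rests on \cite[Proposition~3.3]{Dem88} together with Corollary~\ref{cor:AH}, shows that $X\cong X'$ forces $Y\cong Y'$ and $H$ linearly equivalent to $H'$, so that $Y$ and $[H]$ are determined by $X$.

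I expect the main obstacle to be precisely this canonicity check rather than the existence half, which is essentially a concatenation of Propositions~\ref{lm:SU} and~\ref{lm:T2-ex}. Concretely, the subtle points are confirming that passing to a different maximal torus of $\SL$ and to a different normalization of the cone $\sigma$ from Example~\ref{ex:toric-2} does not change the isomorphism class of the pair $(Y,[H])$, and that the value of $r$ read off from $\sigma$ agrees with the order of the generic isotropy. Everything else reduces to the dimension count and to invoking the earlier results.
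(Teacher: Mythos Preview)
Your proposal is correct and follows the same route as the paper: the paper's proof is literally the single sentence ``a direct consequence of Proposition~\ref{lm:SU} and Proposition~\ref{lm:T2-ex},'' and you are simply unpacking that consequence, adding the dimension count and spelling out why the triple $(r,Y,[H])$ is canonical (independence of the choice of maximal torus, intrinsic meaning of $r$ as the order of the generic isotropy modulo $U_+$, and the uniqueness clause already recorded in Example~\ref{ex:T-var-U}). The extra canonicity discussion you flag as the ``main obstacle'' is work the paper leaves to the reader; your treatment of it is sound.
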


\begin{remark} \label{rk:special-easy}
  The variety $X$ can be recovered from the data in
  Theorem~\ref{th:special} as follows. Let $\sigma$ be the cone
  spanned in $N_\QQ\simeq \QQ^2$ by the rays $(1,0)$ and $(r-1,r)$,
  $r>0$, and let $B_s=H^0(Y,\OO_Y(sH))$. Then $X$ is equivariantly
  isomorphic to $\spec A$, where $A$ is the $M$-graded algebra
  $$A=\bigoplus_{m\in \sigma^\vee_M}A_m\chi^m, \quad\mbox{such that}\quad
  A_m=B_{m_1+m_2}\,.$$

  The pair $(Y,[H])$ defines, via Demazure's construction
  \cite{Dem88}, a $\TT^1$-variety $W$ of dimension $k+1$. Then the
  variety $W$ with the new non-effective $\TT^1$-action given by
  $\TT^1\rightarrow \TT^1$, $t\mapsto t^r$, is nothing but $\spec
  \KK[X]^{U_+}$ endowed with the action of the maximal torus
  $T\subseteq \SL$. The corresponding non-effective $\ZZ$-grading on
  $\KK[X]^{U_+}$ is given by
  $$\KK[X]^{U_+}=\bigoplus_{i\in\ZZ_{\geq0}}B_i t^{ri}\,.$$
\end{remark}

From this classification we obtain the following corollary. 

\begin{corollary} \label{cor-sp-tor}
  Let $X$ be an affine toric variety endowed with a special
  $\SL$-action. If the canonical torus $\TT^2$ of the special action
  is contained in the big torus, then the $\SL$-action is normalized
  by the big torus.
\end{corollary}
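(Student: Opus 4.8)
The plan is to reduce the statement to the $\TT_{\mathrm{big}}$-homogeneity of the two basic locally nilpotent derivations attached to the $\SL$-action, and the key simplification will be that the fiber-type data forces these derivations to be \emph{untwisted} root derivations. First I would fix the setup. Since the $\SL$-action is special, Proposition~\ref{lm:SU} equips $X$ with its canonical $2$-torus $\TT^2=T\cdot S$, with respect to which the $\SL$-action is compatible and of fiber type, and by hypothesis $\TT^2$ is a subtorus of the big torus $\TT_{\mathrm{big}}$. By Proposition~\ref{lm:T2-ex} I may present $X=X[Y,\DD]$ as a $\TT^2$-variety with $\DD=\Delta\cdot H$, where $\Delta=(1,1)+\sigma$ and $\sigma=\cone\big((1,0),(r-1,r)\big)\subseteq N_{2,\QQ}$; by Example~\ref{ex:toric-2} the relevant $\SL$-root is $e=(1,-1)$, and $\partial_\pm$ are the homogeneous LNDs of degree $\pm e$ produced in Theorem~\ref{sec:sl-fiber}.

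The crucial observation is that $\langle e,(1,1)\rangle=0$, so the evaluation divisor satisfies $\DD(e)=\langle e,(1,1)\rangle\cdot H=0$ (and likewise $\DD(-e)=0$). Hence in the proof of Theorem~\ref{sec:sl-fiber} the twisting function $\varphi$ with $\divi(\varphi)+\DD(e)=0$ has trivial divisor and is therefore a nonzero constant. Consequently $\partial_\pm$ are the untwisted root derivations: since $\DD(m+e)=\DD(m)$, they act by $f\chi^m\mapsto(\mathrm{const})\cdot\langle m,\rho_{\pm e}\rangle\, f\chi^{m\pm e}$, leaving the factor $f\in\KK(Y)$ untouched and merely shifting the $\TT^2$-weight $m$ by $\pm e$.

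Next I would upgrade the homogeneity. Write $\tM$ for the character lattice of $\TT_{\mathrm{big}}$, so that $A=\KK[X]$ is $\tM$-graded with one-dimensional homogeneous components, and let $\pi\colon\tM\twoheadrightarrow M_2$ be the restriction of characters to $\TT^2\subseteq\TT_{\mathrm{big}}$; thus the $\TT^2$-grading is a coarsening of the $\tM$-grading. The goal is to show that the $\tM$-degree shift effected by $f\chi^m\mapsto f\chi^{m+e}$ is one and the same lattice vector $\te\in\tM$ for all admissible $(f,m)$. Denoting by $d(f,m)$ the $\tM$-degree of an $\tM$-homogeneous element $f\chi^m$ and using that $\tM$-degrees add under the product $(f\chi^m)(f'\chi^{m'})=ff'\chi^{m+m'}$, one obtains $d(f,m+e)-d(f,m)=d(f',m'+e)-d(f',m')$, so this difference is a constant $\te$ with $\pi(\te)=e$. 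Therefore $\partial_+$ is $\tM$-homogeneous of degree $\te$, and symmetrically $\partial_-$ is $\tM$-homogeneous of degree $-\te$; in particular $\pm\te$ are $\SL$-roots of the defining cone of $X$, in accordance with Theorem~\ref{sl-toric}.

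Finally, recall that a $\GA$-action is normalized by a torus precisely when the corresponding LND is homogeneous for that torus. Hence the root subgroups $U_\pm$ are normalized by $\TT_{\mathrm{big}}$, and since $\widetilde{\SL}=\langle U_+,U_-\rangle$ the whole $\SL$-action is normalized by the big torus, as claimed. I expect the main obstacle to be the constancy of the degree shift in the third paragraph — equivalently, the fact that the vanishing of $\DD(e)$ forces $\partial_\pm$ to be untwisted; once this is secured, the remaining points (that the chosen $\TT^2$-presentation is genuinely a coarsening of the $\tM$-grading, guaranteed by $\TT^2\subseteq\TT_{\mathrm{big}}$, and the identification of $\partial_\pm$ with the untwisted root derivations) are routine.
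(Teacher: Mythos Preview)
Your strategy differs from the paper's: rather than arguing directly that $\partial_\pm$ are homogeneous for the big torus, the paper invokes \cite[Section~11]{AlHa06} to arrange that $Y$ is toric for the quotient torus $\TT_{\mathrm{big}}/\TT^2$ with $\DD$ supported on the toric divisors, then computes the rays of the resulting big cone $\tilde\sigma$, exhibits $\tilde e=(e,\bar 0)$ as an $\SL$-root of $\tilde\sigma$ by a direct pairing check, and concludes by uniqueness of the fiber-type $\SL$-action attached to $e$.

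Your third paragraph, however, has a genuine gap. The multiplicativity identity $d(f,m+e)-d(f,m)=d(f',m'+e)-d(f',m')$ already presupposes that $f\chi^{m+e}$ is $\tM$-homogeneous whenever $f\chi^m$ is, and this is precisely the point at issue. ``Untwistedness'' ($\varphi$ constant, from $\DD(e)=0$) only says that $\partial_+$ acts on $\fract A$ as multiplication by the element $\chi^e$ times a scalar depending on $m$; but the Altmann--Hausen placeholder $\chi^e\in\KK(Y)[M]$ is a priori only $M$-homogeneous, and whether it is $\tM$-homogeneous depends on the specific embedding $A\hookrightarrow\KK(Y)[M]$, over which the normalization of Proposition~\ref{lm:T2-ex} gives no control. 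One way to close the gap: since both $\partial_+$ and $\partial_-$ map $A$ into $A$, the elements $\chi^{e}$ and $\chi^{-e}$ lie in localizations of $A$ at $\tM$-homogeneous elements, hence in the Laurent ring $\KK[\tM]$; then $\chi^{e}\cdot\chi^{-e}=1$ forces $\chi^e$ to be a unit of $\KK[\tM]$, i.e.\ a single monomial $\lambda\chi^{\tilde e}$, after which your argument goes through with shift $\tilde e$. This extra step---or an equivalent device, such as the paper's use of \cite[Section~11]{AlHa06} to select a $\tM$-compatible presentation from the start---is essential and is missing from your proposal.
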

\begin{proof}
  By Proposition~\ref{lm:SU} and Proposition~\ref{lm:T2-ex} we can assume
  that $X$ regarded as $\TT^2$-variety is given by the combinatorial
  data $X=X[Y,\DD]$, where $Y$ is a normal semiprojective variety $Y$
  and $\DD$ is the proper $\sigma$-polyhedral divisor given by
  $$\DD=\sum_Z \Delta_Z\cdot Z\,,$$
  where $\sigma$ is the cone spanned in $N_\QQ\simeq \QQ^2$ by the
  vectors $(1,0)$ and $(r-1,r)$, $\Delta_Z=\alpha_Z(1,1)+\sigma$, and
  $\sum_Z \alpha_Z\cdot Z$ is an ample $\QQ$-Cartier divisor on
  $Y$. In this case, the $\SL$-root of the cone $\sigma$ is
  $e=(1,-1)$. Furthermore, the $\SL$-action of fiber type
  corresponding to $e$ is unique.

  Since $X$ is toric and $\TT^2$ is a subtorus of the big torus, by
  \cite[Section 11]{AlHa06} we can assume that $Y$ is the toric
  variety given by a fan $\Sigma\subseteq \tN_\QQ$ and that $\DD$ is
  supported in the toric divisors of $Y$. Denote by $Z_\rho$ the toric
  divisor corresponding to a ray $\rho\in\Sigma(1)$. In this case, the
  $X$ is the toric variety given by the cone $\widetilde{\sigma}$ in
  $N_\QQ\oplus\tN_\QQ$ spanned by
  $$(\sigma,\bar{0})\quad \mbox{and}\quad (\Delta_{Z_\rho},\rho)\quad\forall
  \rho\in \Sigma(1)\,.$$ %
  Hence, the rays of the cone $\widetilde{\sigma}$ are spanned by
  $$\nu_+=((1,0),\bar{0}),\quad \nu_-=((r-1,r),\bar{0}),
  \quad\mbox{and}\quad
  \nu_\rho=(\alpha_{Z_\rho}(1,1),\rho)\,\quad\forall
  \rho\in\Sigma(1)\,.$$

  We claim that there exists an $\SL$-root $\widetilde{e}\in
  M\oplus\tM$ of the cone $\widetilde{\sigma}$ which restricted to
  $\sigma$ gives the $\SL$-root $e$ of $\sigma$. Indeed,
  $\widetilde{e}=(e,\bar{0})=((1,-1),\bar{0})$ is an $\SL$-root of the
  cone $\widetilde{\sigma}$ since the duality pairing between
  $\widetilde{e}$ and the rays of $\widetilde{\sigma}$ are
  $\langle\widetilde{e},\nu_+\rangle=1$,
  $\langle\widetilde{e},\nu_-\rangle=-1$, and
  $\langle\widetilde{e},\nu_\rho\rangle=0$ for all
  $\rho\in\Sigma(1)$.
\end{proof}

\begin{remark}
  In the case of a special $\SL$-action on a 3-dimensional toric
  variety $X$, by \cite{BeHa03} the canonical torus $\TT^2$ is
  conjugated to a subtorus of the big torus. So up to conjugation in
  $\Aut(X)$, every special $\SL$-action on $X$ is normalized by the
  big torus. In higher dimension, it is an open problem whether
  $\TT^2$ is conjugated to a subtorus of the big torus.
\end{remark}

\begin{corollary}
  Consider a special $\SL$-action on the affine space $\AF^s$. Assume
  that the action of the canonical torus $\TT^2$ on $\AF^s$ is
  linearizable. Then there exists an $\SL$-equivariant isomorphism
  $\AF^s\simeq\KK^2\oplus\KK^{s-2}$, where $\KK^2$ is the tautological
  $\SL$-module and the $\SL$-action on $\KK^{s-2}$ is identical.
\end{corollary}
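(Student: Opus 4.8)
The plan is to reduce to a toric $\SL$-action and then read off the representation from the corresponding $\SL$-root. First I would use linearizability: since the canonical torus $\TT^2$ acts linearly on $\AF^s$, a linear torus representation is diagonalizable, so after a linear change of coordinates $\TT^2$ acts diagonally. Hence $\TT^2$ is a subtorus of the diagonal torus $\TT^s=(\KK^*)^s$, which is exactly the big torus of the standard toric structure on $\AF^s=\spec\KK[x_1,\dots,x_s]$. Thus $\AF^s$ is an affine toric variety carrying a special $\SL$-action whose canonical torus $\TT^2$ sits inside the big torus, and Corollary~\ref{cor-sp-tor} applies to give that the $\SL$-action is normalized by $\TT^s$.

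Next I would upgrade ``normalized'' to ``compatible.'' By Proposition~\ref{lm:SU} we have $\TT^2=T\cdot S$ with $T$ the maximal torus of $\SL$; since $T$ acts on the image of $\SL$ in $\Aut(\AF^s)$ by nontrivial inner automorphisms and $T\subseteq \TT^s$, the big torus does not centralize the $\SL$-action. Being normalized but not centralized, the action is compatible with $\TT^s$, so Theorem~\ref{sl-toric} says it is given by an $\SL$-root $\te$ of the cone $\sigma=\QQ_{\geq0}^s$ defining $\AF^s$. The rays of $\sigma$ are the standard basis vectors $e_1,\dots,e_s$ of $N$; writing $e_1^\ast,\dots,e_s^\ast$ for the dual basis of $M$, Definition~\ref{def:sl-root} forces $\langle\te,e_k\rangle=0$ for every ray $e_k$ except the two distinguished ones, so the $\SL$-roots are precisely the vectors $-e_i^\ast+e_j^\ast$ with $i\neq j$. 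Permuting coordinates (a toric automorphism of $\AF^s$) I may assume $\te=-e_1^\ast+e_2^\ast=(-1,1,0,\dots,0)$, with distinguished rays $\rho_{\te}=e_1$ and $\rho_{-\te}=e_2$, and then $\rho_{-\te}-\rho_{\te}$ is primitive, so the action is effective.

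Finally I would make the action explicit. With $\chi^m=x_1^{m_1}\cdots x_s^{m_s}$, the formula $\partial_{\te}(\chi^m)=\langle m,\rho_{\te}\rangle\chi^{m+\te}=m_1\chi^{m+\te}$ from Definition~\ref{srho} gives $\partial_+=x_2\,\tfrac{\partial}{\partial x_1}$, and likewise $\partial_-=x_1\,\tfrac{\partial}{\partial x_2}$, whence $\delta=[\partial_+,\partial_-]=x_2\,\tfrac{\partial}{\partial x_2}-x_1\,\tfrac{\partial}{\partial x_1}$. All three derivations act only on $\langle x_1,x_2\rangle$ and annihilate $x_3,\dots,x_s$, so the coordinate functions $x_1,x_2$ span a two-dimensional $\SL$-submodule isomorphic to the tautological representation (self-dual for $\SL$), while $x_3,\dots,x_s$ are $\SL$-invariant. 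Dualizing this linear action yields the $\SL$-equivariant splitting $\AF^s\simeq\KK^2\oplus\KK^{s-2}$ with $\SL$ acting tautologically on $\KK^2$ and trivially on $\KK^{s-2}$, as claimed.

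The only genuine content lies in the first two steps: deducing the toric structure with $\TT^2$ inside the big torus, and verifying non-centralization so that Corollary~\ref{cor-sp-tor} and Theorem~\ref{sl-toric} both apply. Once the action is known to arise from an $\SL$-root, the identification of the module is a direct computation, and the passage from the root $(-1,1,0,\dots,0)$ to the desired normal form is achieved by a coordinate permutation.
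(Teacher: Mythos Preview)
Your proof is correct and follows essentially the same route as the paper's: linearize $\TT^2$ into the diagonal torus, apply Corollary~\ref{cor-sp-tor}, and identify the resulting $\SL$-root of the positive orthant up to a coordinate permutation. The paper's own argument is a two-line sketch that simply asserts the $\SL$-root is $(1,-1,0,\dots,0)$; you have filled in the steps it leaves implicit (diagonalization of the linear $\TT^2$-action, the non-centralization check needed to pass from ``normalized'' to ``compatible'' so Theorem~\ref{sl-toric} applies, and the explicit identification of the module), which makes your version more self-contained without changing the underlying strategy.
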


\begin{proof}
  By Corollary~\ref{cor-sp-tor} we may assume that the $\SL$-action on
  $\AF^s$ is normalized by the torus of all diagonal matrices $\TT^s$
  and, moreover, this action is given by the $\SL$-root
  $(1,-1,0,\ldots,0)$.
\end{proof}

\section{Quasi-homogeneous $\SL$-threefolds}
\label{sec:popov}

In this section we study $\SL$-actions with an open orbit on a normal
affine threefold~$X$.

\subsection{$\SL$-threefolds via polyhedral divisors}

It is a byproduct of a classification due to Popov \cite{Pop73} that
most quasi-homogeneous $\SL$-threefolds admit the action of a two
dimensional torus making the $\SL$-action compatible (see
below). Hence, they can be classified with the methods of
Section~\ref{sec:T-var-hor}. In this section $\TT^2$ denotes an
algebraic torus of dimension 2, and so $\rank M=2$.

\begin{proposition} \label{th-popov} Let $X$ be an affine normal
  quasi-homogeneous $\SL$-threefold. Then $X$ admit the action of a
  two dimensional torus $\TT^2$ making the $\SL$-action compatible
  except in the case where $X$ is equivariantly isomorphic to $\SL/H$,
  with $H<\SL$ non-commutative and finite. Furthermore, $\TT^2=T\times
  R$, where $T$ is the maximal torus in $\SL$ and $R$ is a
  one-dimensional torus commuting with the $\SL$-action.
\end{proposition}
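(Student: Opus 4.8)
The plan is to classify the possible open orbits $\SL/H$ directly, using the fact that $X$ is quasi-homogeneous, and then to produce the torus $R$ commuting with $\SL$ from the normalizer $N_{\SL}(H)$. First I would fix a general point $x$ in the open orbit, set $H=\SL_x$ to be its isotropy group, and note that $\dim X=3$ forces $\dim(\SL/H)=3$, hence $H$ is a finite subgroup of $\SL$. The subgroups of $\SL$ are well understood, so I would run through the possibilities for $H$. The key geometric input is that the torus $R$ we seek arises as (a lift to $\SL$ of) a one-parameter subgroup of $N_{\SL}(H)/H$: concretely, $R$ acts on $\SL/H$ by right multiplication, which commutes with the left $\SL$-action, and this descends to $X$ precisely when $N_{\SL}(H)/H$ contains a one-dimensional torus. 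Thus the whole statement reduces to analysing $\rank\big(N_{\SL}(H)/H\big)$ for the finite subgroups $H<\SL$.

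Next I would organize the finite subgroups of $\SL$ by the shape of their image in $\PSL\cong\PGL_2$, which are cyclic, dihedral, or one of the three exceptional groups (tetrahedral, octahedral, icosahedral). For $H$ \emph{commutative} — that is, $H$ cyclic — the normalizer is large: a cyclic $H$ lies in a maximal torus of $\SL$, and its normalizer contains that whole maximal torus, so $N_{\SL}(H)/H$ has positive dimension and yields the desired one-dimensional torus $R$ commuting with $\SL$. This is the generic case and produces the compatible $\TT^2=T\times R$ action claimed in the proposition, with $T$ the maximal torus of $\SL$. For \emph{non-commutative} finite $H$ (dihedral or exceptional), the normalizer $N_{\SL}(H)$ is again finite, so $N_{\SL}(H)/H$ is finite and no such torus $R$ exists; this is exactly the exceptional case $X\cong\SL/H$ with $H$ non-commutative finite that the proposition excludes.

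Once the torus $R$ is constructed, I would verify it meets the definition of compatibility: $R$ centralizes $\SL$ by construction, and together with the maximal torus $T\subseteq\SL$ it generates a two-dimensional torus $\TT^2=T\times R$ whose action on $X$ normalizes (but does not centralize) $\SL$, since $T$ acts on the $U_\pm$ by the adjoint action with nontrivial weights $\pm2$. One must also check that $\TT^2$ acts \emph{effectively} on $X$, i.e.\ that the product $T\times R$ embeds in $\Aut(X)$ with the correct dimension; here I would use that $X$ is three-dimensional and $\TT^2$ acts with a two-dimensional general orbit inside the open $\SL$-orbit, so the complexity is one as asserted in the surrounding text.

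\textbf{Main obstacle.} The delicate point is the passage from an abstract one-parameter subgroup of $N_{\SL}(H)/H$ to an honest regular action of a torus $R$ on the affine variety $X$ (not merely on the open orbit $\SL/H$): one must ensure the right-multiplication action extends across the boundary $X\setminus(\SL/H)$ and that, after possibly passing to a finite cover $\widetilde{R}\to R$ to lift the torus into $\SL$ (as in the proof of Lemma~\ref{lm-unmix}), the resulting action remains effective and commutes with $\SL$ on all of $X$. I expect this extension-and-effectivity bookkeeping, together with the careful case analysis separating commutative from non-commutative finite $H$, to be where the real work lies; the identification of the subgroups of $\SL$ themselves is classical and I would simply invoke it.
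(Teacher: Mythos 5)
Your reduction of the problem to the structure of $N_{\SL}(H)/H$ for finite $H<\SL$ is a sensible starting point (and, for the record, the paper offers no argument at all here: it simply cites Popov \cite{Pop73} and Kraft \cite[Ch.~3]{Kra84}, whose classification is exactly what hides the hard work). But your sketch has two genuine gaps, and they are not the ``bookkeeping'' you describe. First, the extension problem. Right multiplication by a torus $R\subseteq N_{\SL}(H)/H$ gives an action on the open orbit $\SL/H$, i.e.\ on $\KK[\SL/H]=\bigoplus_d V(d)\otimes\left(V(d)^*\right)^H$, but $\KK[X]$ is merely some $\SL$-stable subalgebra of this, determined by a choice of subspaces of the multiplicity spaces $\left(V(d)^*\right)^H$, and there is no a priori reason these subspaces are $R$-stable. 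An automorphism of an open subset of a normal affine variety need not extend across a boundary divisor, and here the boundary generically \emph{is} a divisor (two-dimensional orbits). Note also that the $\TT^2$-action on $X$ has complexity one, so the $\SL$-action on $\SL/H$ is not spherical and you cannot argue via finiteness/discreteness of the set of invariant valuations. Establishing that $\KK[X]$ is $R$-stable is essentially equivalent to rederiving Popov's classification; your proposal flags this as the main obstacle but offers no mechanism for overcoming it.

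Second, and more seriously, your case analysis does not yield the dichotomy as stated. For non-commutative finite $H$ you correctly observe that $N_{\SL}(H)/H$ is finite, so \emph{your construction} produces no torus; but the proposition asserts that the only exception is the \emph{homogeneous} case $X\simeq\SL/H$. To conclude this you must rule out non-homogeneous normal affine embeddings $X\supsetneq\SL/H$ with $H$ non-commutative finite --- i.e.\ prove that $\SL/H$ is affinely closed for such $H$. Indeed, if such an $X$ existed, the proposition would claim a commuting one-dimensional torus $R$ on it, and $R$ would act on the open orbit through the finite group $N_{\SL}(H)/H$, a contradiction; so the statement silently encodes this non-existence result, which is again part of Popov's classification (equivalently, the fact that non-homogeneous quasi-homogeneous $\SL$-threefolds have cyclic generic stabilizer). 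Your proposal conflates ``no torus arises from right multiplication'' with ``this case is the stated exception,'' leaving that possibility unaddressed. Absent independent proofs of these two points, the honest proof is the paper's: invoke \cite{Pop73} or \cite[Ch.~3]{Kra84}.
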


\begin{proof}
  See \cite{Pop73} or \cite[Ch. 3, p. 4.8]{Kra84}.
\end{proof}

In the following we restrict to the case where $X\not\simeq \SL/H$
with $H<\SL$ non-commutative and finite so that $X$ can be regarded as
a $\TT^2$-variety of complexity one. Up to conjugation, the only
finite commutative subgroups of $\SL$ are the cyclic groups
$$ \mu_r=\left\{
  \left(\begin{smallmatrix}
    \xi & 0 \\
    0 & \xi^{-1}
  \end{smallmatrix}\right)
  \mid \xi^r=1\right\},\quad r\in \ZZ_{>0}\,.$$

Popov's classification is given in terms of the order of the generic
stabilizer $r_X$ and the so-called height $h_X$. We propose to replace
the height by the slope $\hbar_X$. See Definitions \ref{def:slope} and
\ref{def:popov-h} for a precise definition. The main result of this
section is the following theorem. The invariants $r_X$, $h_X$ and
$\hbar_X$ are given in the table below but the computation of their
values will be performed after the proof of the theorem. In the table
we also give the number $N_X$ of $\SL$-orbits of $X$. This number is
only given for reference as it is not proved in the text.

\begin{theorem} \label{th:sl-emb} %
  Let $X$ be an affine normal quasi-homogeneous $\SL$-threefold. Then
  $X\not\simeq \SL/H$, with $H<\SL$ non-commutative and finite if and
  only if $X\simeq X[C,\DD]$ and the combinatorial data $(C,\DD)$ is
  as in the following table: %
  \smallskip
  \begin{center}
    \begin{tabular}{|c|c|c|c|c|c|c|}
      \hline
      $C$ & $\sigma$ & $\DD$ & $r_X$ & $h_X$ & $\hbar_X$ & $N_X$  \bigstrut\\
      \hline \hline 
      $\AF^1$ & $\{0\}$ & $\Delta_0[0]+\Delta_1[1]$ & $r$ & -- & -- & $1$  \bigstrut\\
      \hline
      $\AF^1$ & $\cone((1,1))$ & $\Delta_0[0]+\Delta_1[1]$ & $r$ & $1$ &
      $1$ & $2$ \bigstrut\\
      \hline
      $\PP^1$ & $\cone((a+1,a),(r+a-1,r+a))$ &
      $\Delta_0[0]+\Delta_1[1]+\Delta_\infty[\infty]$ & $r$ &
      $\dfrac{a}{a+r}$ & $\dfrac{a}{a+1}$ & $3$ \bigstrut\\
      \hline
    \end{tabular}
  \end{center}
  \medskip
  Here $a\in \QQ_{>0}$, $r\in \ZZ_{>0}$,
  \begin{align*}
    \Delta_0=\conv(0,(1,0))+\sigma,&\quad
    \Delta_1=\conv(0,(r-1,r))+\sigma, \quad \mbox{and}\quad
    \Delta_\infty=(a,a)+\sigma\quad a\in \QQ_{>0}\,.
  \end{align*}
  Furthermore, $X[C,\DD]$ is an $(\SL/\mu_r)$-embedding, and $X$ is an
  homogeneous space of $\SL$ if and only if $\sigma=\{0\}$.
\end{theorem}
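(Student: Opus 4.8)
The plan is to combine the structural results already proved—most notably Proposition~\ref{th-popov}, Theorem~\ref{sec:sl-fiber}, and Theorem~\ref{th:hor}—to reduce the classification to a finite combinatorial analysis of complexity-one $\TT^2$-varieties carrying a compatible $\SL$-action. By Proposition~\ref{th-popov}, once we exclude $X\simeq\SL/H$ with $H$ non-commutative finite, the threefold $X$ carries an action of $\TT^2=T\times R$ making the $\SL$-action compatible, and $X$ becomes a $\TT^2$-variety of complexity one. Since $X$ is quasi-homogeneous, the open $\SL$-orbit forces the complexity-one base $C$ to be a smooth rational curve, hence $C\simeq\AF^1$ or $C\simeq\PP^1$; this establishes the first column of the table. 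The task then splits according to whether the $\SL$-action is of fiber type or of horizontal type.

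First I would treat the fiber-type case. Here the $\SL$-root $e$ of $\sigma$ must satisfy the conditions of Theorem~\ref{sec:sl-fiber}, namely $\DD(e)$ principal and $\DD(e)+\DD(-e)=0$. Because $\rank M=2$, Remark~\ref{rk:fiber}~(2) forces each $\Delta_Z$ to be a single translated cone $v_Z+\sigma$. Quasi-homogeneity (an open $\SL$-orbit of dimension three) combined with the two-dimensional $\TT^2$-action pins down the admissible cones: either $\sigma=\{0\}$ (so $X$ is already homogeneous and $C=\AF^1$ with two special fibers, giving the first row) or $\sigma=\cone((1,1))$ (giving the second row). In each case I would normalize the data via Corollary~\ref{cor:AH} to arrive at the stated $\Delta_0,\Delta_1$, and read off $r_X=r$ as the order of the generic stabilizer $U_{(r)}$ appearing in Example~\ref{ex:T-var-U}.

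Next I would treat the horizontal-type case, which yields the third row. Here I invoke Theorem~\ref{th:hor}: the existence of a compatible $\SL$-action of horizontal type, not compatible with a bigger torus, forces $\DD$ into one of the two shapes listed there, with a base curve $C=\PP^1$ (the $\AF^1$ shapes collapse to toric, hence fiber-type, cases by Corollary~\ref{cor-tor-hor}). Since $\rank M=2$, the orthogonal complement $e^\perp$ is one-dimensional, so the bounded polyhedron $\Pi\subseteq e^\perp$ at infinity is a single point $(a,a)$ with $a\in\QQ_{>0}$; properness of $\DD$ on $\PP^1$ (the condition $\deg\DD\subsetneq\sigma$ recalled after Corollary~\ref{cor:AH}) forces $a>0$ and determines the cone $\sigma=\cone((a+1,a),(r+a-1,r+a))$ after choosing $e=(1,-1)$ and normalizing the colored vertices to $(1,0)$ and $(r-1,r)$. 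Conversely, for every such datum Theorem~\ref{th:hor} guarantees that the two coherent pairs $(\tDD_\pm,\pm e)$ produce the required $\slt$-triple, so each table entry genuinely carries the asserted $\SL$-action.

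The main obstacle, and the part requiring the most care, will be verifying that the list is both exhaustive and non-redundant, and that the final qualitative assertions hold. Exhaustiveness amounts to checking that no further cone shapes survive the constraints $v_0^-,v_1^+\notin\sigma$ together with $v_0^-(e)=1$, $v_1^+(e)=-1$ in rank two; here a short but genuine case analysis on the primitive generators of $\sigma$ is needed, using that $\pm e\notin\sigma^\vee$. For the last two claims I would argue as follows: the generic stabilizer is $\mu_r$ precisely because the root data identify the finite central extension governing the $T$-action, so $X[C,\DD]$ is an $(\SL/\mu_r)$-embedding with $r_X=r$; and $X$ is a single homogeneous space exactly when there are no nontrivial $\TT$-orbit degenerations, i.e.\ when the tail cone is trivial, $\sigma=\{0\}$, which occurs only in the first row. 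The computation of $h_X$ and $\hbar_X$ I would defer, as the excerpt indicates, to the paragraph following the theorem, since these invariants are defined only in Definitions~\ref{def:slope} and~\ref{def:popov-h}.
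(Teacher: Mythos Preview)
Your fiber/horizontal split is backwards, and this is a genuine gap rather than a cosmetic one. A compatible $\SL$-action of fiber type on any affine $\TT$-variety is special by Proposition~\ref{prp:fiber-special}; special means the generic stabilizer contains a maximal unipotent subgroup, so the generic $\SL$-orbit has dimension at most $2$ and there can be no open orbit on a threefold. Consequently all three rows of the table arise from the \emph{horizontal} case, not just the third one. The paper's argument makes this explicit: open orbit $\Rightarrow$ finite generic stabilizer $\Rightarrow$ non-special $\Rightarrow$ horizontal type and not compatible with a bigger torus (Proposition~\ref{prop:non-special}), hence Theorem~\ref{th:hor} applies throughout. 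In particular your claim that ``the $\AF^1$ shapes collapse to toric, hence fiber-type, cases by Corollary~\ref{cor-tor-hor}'' is false: in rows~1 and~2 both $\Delta_0$ and $\Delta_1$ are non-trivial (they each have two vertices), so the hypothesis of Corollary~\ref{cor-tor-hor} is not met and these varieties are typically not toric. Also, your identification of the generic stabilizer as the group $U_{(r)}$ from Example~\ref{ex:T-var-U} is inconsistent with quasi-homogeneity: $U_{(r)}$ is one-dimensional, whereas the stabilizer here is the finite cyclic group $\mu_r$.

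What you are missing is the mechanism the paper actually uses to detect an open orbit within the horizontal classification: since $\SL$ is generated by $U_\pm$, one has $\KK[X]^{\SL}=\ker\partial_+\cap\ker\partial_-$, and the orbit is open iff this intersection is $\KK$. Lemma~\ref{lm:kernel} translates this into a condition on the vertices of $\deg\DD|_{\AF^1}$, which eliminates the second family of Theorem~\ref{th:hor} and the degenerate value $r=0$. The tail cone $\sigma$ is then pinned down separately for the two base curves: when $C=\AF^1$, Lemma~\ref{hor-int} gives $\pm e\in\sigma^\vee$, forcing $\sigma\subseteq e^\perp=\QQ(1,1)$, i.e.\ $\sigma=\{0\}$ or $\sigma=\cone((1,1))$ (rows~1 and~2); when $C=\PP^1$, the same lemma forces $\deg\DD$ to meet both rays of $\sigma$, yielding the cone in row~3. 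The converse direction and the identification of the stabilizer as $\mu_r$ are handled by Lemmas~\ref{lm:sl-Zr} and~\ref{lm:rx}.
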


Before proving this theorem, we need a preliminary result. Let
$X=X[C,\DD]$ for some $\sigma$-polyhedral divisor on a smooth
curve. Since the $\SL$-action has an open orbit, the general isotropy
group is finite, and so the $\SL$-action in non-special. Hence, by
Proposition~\ref{prop:non-special} the $\SL$-action is of horizontal
type, $C=\AF^1$ or $C=\PP^1$, and we may and will assume in the sequel
that $\DD$ is as in Theorem~\ref{th:hor}.

We will use the following general lemma to identify the
$\sigma$-polyhedral divisors in Theorem~\ref{th:hor} that give rise to
quasihomogeneous $\SL$-actions. Here the dimension of a domain $A$ is
the dimension of the algebraic variety $\spec A$.

\begin{lemma}
  Let $X$ be a normal affine variety endowed with a non-special
  $\SL$-action. Then $X$ has finite generic stabilizer if and only if
  $\dim\KK[X]^{\SL}\leq \dim X-3$.
\end{lemma}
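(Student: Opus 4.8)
The plan is to translate the statement into a comparison of two dimensions attached to the action. On one side we have the transcendence degree $\trdeg_\KK \KK(X)^{\SL}$ of the field of \emph{rational} invariants, which by Rosenlicht's theorem equals $\dim X$ minus the dimension $d$ of a general $\SL$-orbit. On the other side we have $\dim \KK[X]^{\SL}$, the dimension of the categorical quotient; since $\SL$ is reductive, $\KK[X]^{\SL}$ is a finitely generated domain and $\dim \KK[X]^{\SL} = \trdeg_\KK \fract\big(\KK[X]^{\SL}\big)$. Because every regular invariant is a rational invariant, $\fract(\KK[X]^{\SL}) \subseteq \KK(X)^{\SL}$, and therefore we always have $\dim \KK[X]^{\SL} \le \dim X - d$. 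Finally note that the generic stabilizer has dimension $3-d$, so it is finite if and only if $d = 3 = \dim \SL$.

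The implication ``finite generic stabilizer $\Rightarrow \dim \KK[X]^{\SL} \le \dim X - 3$'' is then immediate: finiteness forces $d = 3$, and the general inequality above becomes $\dim \KK[X]^{\SL} \le \dim X - 3$. This direction uses neither normality nor non-specialness.

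For the converse I would argue by contraposition, assuming the generic stabilizer $H$ is positive-dimensional and aiming to show $\dim \KK[X]^{\SL} = \dim X - 2 > \dim X - 3$. First I determine $H$ up to conjugacy: by the classification of positive-dimensional subgroups of $\SL$ recalled in Section~\ref{sec:main}, $H$ is conjugate to one of $T$, $N$, $U_{(s)}$ or $B$. Now $U_{(s)}$ and $B$ both contain the maximal unipotent subgroup $U_+$; if a stabilizer in general position contained a maximal unipotent subgroup, the action would be special by definition, contradicting our hypothesis. Hence $H$ is conjugate to $T$ or to $N$, so $\dim H = 1$, a general orbit has dimension $d = 2$, and, crucially, $H$ is reductive.

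It remains to upgrade the bound $\dim \KK[X]^{\SL} \le \dim X - 2$ to an equality, and this is the step I expect to be the main obstacle: the categorical quotient only detects \emph{closed} orbits, so a priori its dimension could drop below $\dim X - d$. The bridge is the standard stability criterion for reductive group actions, which says that because the generic stabilizer $H$ is reductive the action is stable, i.e. a general orbit is closed in $X$. Once general orbits are closed, they are separated by $\KK[X]^{\SL}$, the general fibre of $X \to \spec \KK[X]^{\SL}$ is a single $2$-dimensional orbit, and hence $\dim \KK[X]^{\SL} = \dim X - 2$. This establishes the contrapositive and completes the proof. The only ingredient external to the paper is the stability criterion; everything else is the dimension bookkeeping above combined with the subgroup classification and the definition of a special action already recalled in the text.
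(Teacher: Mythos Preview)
Your proof is correct and follows the same architecture as the paper's: Rosenlicht for the forward direction, and for the converse the classification of positive-dimensional subgroups of $\SL$ together with non-specialness to force the generic stabilizer to be $T$ or $N$, then a stability result to conclude that general orbits are closed and hence $\dim\KK[X]^{\SL}=\dim X-2$. The only difference is the specific stability criterion invoked: the paper uses Luna's criterion \cite[Corollaire~3]{Lun75} (a subgroup of finite index in its normalizer yields closed orbits), whereas you invoke Popov's criterion (reductive generic stabilizer implies a stable action); both apply here since $T$ and $N$ are reductive and each has finite index in its normalizer in $\SL$.
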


\begin{proof}
  Assume first that $X$ has finite generic stabilizer. By Rosenlicht's
  Theorem, the transcendence degree of $\KK(X)^{\SL}$ equals the
  codimension of the general orbit \cite[Corollary 6.2]{Dol03},
  so $$\trdeg \KK(X)^{\SL}=\dim X-3\,.$$ Since $\fract
  \KK[X]^{\SL}\subseteq \KK(X)^{\SL}$, we have $\dim\KK[X]^{\SL}\leq
  \dim X-3$.

  Assume now that the generic stabilizer has positive dimension. Since
  the $\SL$-action is non-special, the generic stabilizer is
  one-dimensional and coincides either with $T$ or $N$. In both cases
  the subgroup has a finite index in its normalizer in $\SL$. By
  \cite[Corollaire 3]{Lun75}, we obtain that the general $\SL$-orbits
  are closed in $X$. Thus, they are separated by regular invariants
  and so $\dim \KK[X]^{\SL}=\dim X-2$.
\end{proof}

We now proceed to the proof of the main theorem in this section.

\begin{proof}[Proof of Theorem \ref{th:sl-emb}]
  
  We prove first the ``only if'' part.  Let $X=X[C,\DD]$ be an affine
  $\TT^2$-variety admitting a compatible $\SL$-action with an open
  orbit. We have $C=\AF^1$ or $C=\PP^1$ and we can assume that $\DD$
  is as in Theorem~\ref{th:hor}. Let $\partial_\pm$ be the homogeneous
  LNDs of horizontal type corresponding to the $U_\pm$-action on $X$
  and let $\pm e$ be the degree of $\partial_\pm$.

  Since $v_1^+(e)=-1$, $e$ is a primitive lattice vector, so up to 
  automorphism of the lattice $M$, we may and will assume
  $e=(1,-1)$. Assume for a moment that $C=\PP^1$. In this case the
  cone~$\sigma$ is full dimensional and so $\sigma^\vee$ is
  pointed. Hence $\pm e\notin \sigma^\vee$. This yields
  $e^\bot=\QQ(1,1)$ intersects with $\sigma$ only once and so
  $\Delta_\infty=(a,a)+\sigma$.

  We have $\KK[X]^{U_\pm}=\ker\partial_\pm$. Since $\SL$ is generated
  by $U_\pm$ as a group, we have $\KK[X]^{\SL}=\ker\partial_+\cap
  \ker\partial_-$. Hence, the compatible $\SL$-action on $X[C,\DD]$
  has an open orbit if and only if
  $$\ker\partial_+\cap \ker\partial_-=\KK\,.$$

  By Lemma~\ref{lm:kernel}, if $\deg \DD|_{\AF^1}$ has only two
  vertices then $\ker\partial_+\cap
  \ker\partial_-\supsetneq\KK$. Hence the second family of
  $\sigma$-polyhedral divisors in Theorem~\ref{th:hor} does not give
  quasihomogeneous $\SL$-threefolds. In the following, we assume that
  $\DD$ is as in the first family in Theorem~\ref{th:hor}.

  Up to an automorphism of the lattice $N$, we can assume
  $v_0^-=(1,0)$ and $v_1^+=(r-1,r)$ with $r\in \ZZ_{\geq 0}$. If $r=0$
  then again $\deg\DD|_{\AF^1}$ has only two vertices and so
  $\ker\partial_+\cap \ker\partial_-\supsetneq\KK$. Hence, $r\geq 1$. This
  shows that $\Delta_0$ and $\Delta_1$ have the form given in the
  theorem.

  It only remains to find the tail cone $\sigma$. Let $C=\AF^1$. In
  this case, $\pm e\in \sigma^\vee$ and so $\sigma=\{0\}$ or
  $\sigma=\cone((1,1))$. If $C=\PP^1$, we let
  $\sigma=\cone(\rho_1,\rho_2)$. Since $\pm e\notin \sigma^\vee$, by
  Lemma~\ref{hor-int} we have
  $$\deg \DD\cap \rho_1\neq \emptyset, \quad\mbox{and}\quad
  \deg \DD\cap \rho_2\neq \emptyset\,.$$ %
  This yields $\rho_1=\cone(a+1,a)$ and $\rho_2=\cone(r+a-1,r+a)$ with
  $a>0$. This proves the ``only if'' part.

  Now, let $X=X[C,\DD]$ be as in the theorem. By Theorem~\ref{th:hor},
  a simple verification shows that $X$ admits an $\SL$-action and by
  Lemma~\ref{lm:kernel} we have $\ker\partial_+\cap
  \ker\partial_-=\KK$. Hence the $\SL$-action has finite generic
  stabilizer and so $X$ is a quasi-homogeneous $\SL$-threefold.

  The last assertion of the theorem is shown in Lemmas~\ref{lm:sl-Zr}
  and \ref{lm:rx} below.
\end{proof}

\subsection{Parameters}

In the remaining of this section we define and compute the parameters
$r_X$, $h_X$, and $\hbar_X$ given in the table in
Theorem~\ref{th:sl-emb}. First, we give a geometric interpretation of
the parameter $r_X$.

\begin{lemma} \label{lm:sl-Zr} Let $X=X[C,\DD]$ be as in
  Theorem~\ref{th:sl-emb}. Then $X$ is equivariantly isomorphic to the
  homogeneous space $\SL/\mu_r$ if and only if $C=\AF^1$ and
  $\sigma=\{0\}$.
\end{lemma}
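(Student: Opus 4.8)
The plan is to reduce the statement to a homogeneity question and then read off the combinatorics of $\sigma$ and $C$. Since $X=X[C,\DD]$ is quasi-homogeneous with finite generic stabilizer (this is how the data in Theorem~\ref{th:sl-emb} were selected), its open $\SL$-orbit $O$ is of the form $\SL/H$ with $H$ finite; as $X$ is not the excluded case $\SL/H$ with $H$ non-commutative, $H$ is cyclic, and its order is the parameter $r$ (Lemma~\ref{lm:rx}), so $O\cong\SL/\mu_r$. Hence $X\cong\SL/\mu_r$ if and only if $X$ is $\SL$-homogeneous, i.e. $X=O$, and it remains to prove that $X=O$ if and only if $C=\AF^1$ and $\sigma=\{0\}$.

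For the implication ``$C=\AF^1,\ \sigma=\{0\}\Rightarrow X=O$'', I would first observe that $X\setminus O$ is closed and invariant under $\SL$ and under the canonical torus $\TT^2$ (the latter normalizes $\SL$, hence permutes $\SL$-orbits and fixes the unique open one). I claim $X\setminus O$ contains no prime divisor. Here $A_0=H^0(\AF^1,\OO)=\KK[t]$, so there is a quotient map $\pi\colon X\to C=\spec A_0$ with $\TT^2$-invariant fibres; since $\sigma$ has no rays, $X$ has no $\TT^2$-invariant prime divisor dominating $C$, and with $C=\AF^1$ there is no fibre at infinity, so every $\TT^2$-invariant prime divisor is a component of some $\pi^{-1}(z)$, $z\in\AF^1$. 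As the $\SL$-action is of horizontal type, $\partial_\pm(t)\neq0$, so $\SL$ moves the fibres of $\pi$ and no such component is $\SL$-invariant. A divisorial component of $X\setminus O$ would, however, be a $\TT^2$-invariant prime divisor fixed by the connected group $\SL$; as there is none, $X\setminus O$ has codimension $\geq2$. Since $X$ is normal and $O$ is affine, Hartogs' principle yields $\KK[X]=\KK[O]$, and therefore $X=O$.

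For the converse ``$X=O\Rightarrow C=\AF^1,\ \sigma=\{0\}$'', I would compute the $\TT^2$-weight monoid of $\KK[X]=\KK[\SL]^{\mu_r}$. Writing $\SL=\spec\KK[a,b,c,d]/(ad-bc-1)$ and using the torus $\TT^2=T\times R$ of Proposition~\ref{th-popov} (with $T$ acting by left and $R=T/\mu_r$ by right translations), the $\mu_r$-invariants $ab,\ cd,\ a^r,\ d^r$ have $\TT^2$-weights $(2,0),\ (-2,0),\ (r,1),\ (-r,-1)$, which positively span $M_\QQ\simeq\QQ^2$. Any $\SL$-isomorphism may be taken $\TT^2$-equivariant, since $T$ is intrinsic to the $\SL$-action and $R$ is determined up to conjugacy, so these weights indeed occur in $\KK[X]$. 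On the other hand $A_m\neq0$ forces $m\in\sigma^\vee$, so the weight monoid of $\KK[X]$ lies in $\sigma^\vee\cap M$; a convex cone containing a positively spanning set is all of $M_\QQ$, whence $\sigma^\vee=M_\QQ$ and $\sigma=\{0\}$. Finally, properness of $\DD$ on $\PP^1$ would require $\deg\DD\subsetneq\sigma=\{0\}$, which is impossible, so $C=\AF^1$.

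The step I expect to be most delicate is the divisor bookkeeping in the first implication: one must be confident that, for $\sigma=\{0\}$ and $C=\AF^1$, the combinatorial description produces no $\TT^2$-invariant prime divisor other than the vertical fibres over $\AF^1$ --- no horizontal divisor (these would come from rays of $\sigma$) and no boundary at infinity. Granting this, the normality/codimension argument and the weight computation are routine.
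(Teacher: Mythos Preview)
Your approach differs from the paper's in both directions, and one direction has a genuine gap while the other is essentially fine.

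\textbf{The circular citation.} You invoke Lemma~\ref{lm:rx} to identify the order of the generic stabilizer with the parameter $r$. But in the paper, Lemma~\ref{lm:rx} is proved \emph{using} Lemma~\ref{lm:sl-Zr} (the homogeneous case is deduced from it, and the general case bootstraps via an open embedding). So this reference is circular. The identification $|H|=r$ is in fact part of what Lemma~\ref{lm:sl-Zr} must establish; the paper devotes its second paragraph to computing this directly from the isotypic decomposition of $\KK[\SL]$ and the structure of $\ker\partial_+$. Your argument, as written, does not supply this computation.

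\textbf{The gap in ``$C=\AF^1,\ \sigma=\{0\}\Rightarrow X$ homogeneous''.} You correctly observe that for $\sigma=\{0\}$ and $C=\AF^1$ every $\TT^2$-invariant prime divisor is a component of a fibre $\pi^{-1}(z)$. The problem is the next step: you claim that ``$\partial_\pm(t)\neq 0$, so $\SL$ moves the fibres of $\pi$ and no such component is $\SL$-invariant.'' But $\partial_\pm(t)\neq 0$ only says that the \emph{function} $t-z$ is not $U_\pm$-invariant. It does not rule out that one irreducible component of $V(t-z)$ is $\SL$-stable while $\SL$ moves the others. Over $z=0$ and $z=1$ the fibre genuinely has two components (coming from the two vertices of $\Delta_0$ and $\Delta_1$), so this is not a vacuous worry. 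One can repair this---for instance by computing $\partial_+(t)=(t-1)\chi^e$ and $\partial_-(t)$ explicitly and tracking the valuations $v_D$ along the relevant components, or by analysing the $\SL$-action on a $2$-dimensional $\TT^2$-orbit closure---but it requires real work beyond what you wrote.

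\textbf{Comparison with the paper.} For the implication ``$X$ homogeneous $\Rightarrow C=\AF^1,\ \sigma=\{0\}$'', the paper argues that on the homogeneous space $G/H$ with $G=\SL\times R$, the general orbit of the reductive subgroup $\TT^2=T\times R$ is closed by a theorem of Luna, and then uses the standard fact that general $\TT$-orbits in $X[C,\DD]$ are closed precisely when $C$ is affine and $\sigma=\{0\}$. Your weight-cone argument for this direction is a valid and rather clean alternative: it avoids the appeal to Luna and reads off $\sigma^\vee=M_\QQ$ directly. For the converse, the paper does not attempt a Hartogs argument; instead it combines the forward implication with the representation-theoretic computation of $r$ to pin down which entry of the classification table realises $\SL/\mu_r$.
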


\begin{proof}
  Assume that $X$ is a homogeneous space. The statement is equivalent
  to the fact that the general orbit of the acting torus is
  closed. Let us consider $G=\SL \times R$, where $R$ is the
  one-dimensional torus commuting with $\SL$, see
  Proposition~\ref{th-popov} so that $X$ is a homogeneous space of
  $G$. Then the acting torus $\TT^2=T \times R$ is a reductive
  subgroup of $G$. By \cite{Lun72} the general $\TT^2$-orbit on $X$ is
  closed.

  To complete the proof, we only need to show that $r$ in the
  definition of $\Delta_1$ is the order of the generic stabilizer of
  the $\SL$-action on $X$. By \cite[II.3.1, Satz 3]{Kra84}, the
  algebra $\KK[\SL]$ seen as $\SL\times\SL$-module has the isotypic
  decomposition
  $$\KK[\SL] \simeq \bigoplus_{d\ge 0} V(d) \otimes V(d)\,,$$
  where $V(d)$ is the simple $\SL$-module of binary forms of degree
  $d$.  The one-dimensional subtorus $R$ commuting with the (left)
  $\SL$-action may be identified with the maximal torus in the second
  (right) $\SL$. Then, the homogeneous space $\SL/\mu_r$ is obtained
  as the quotient of $\SL$ by the cyclic subgroup of order $r$ in
  $R$. So simple $\SL$-submodules in $\KK[\SL/\mu_r]$ have the form
  $V(d) \otimes w$, where $w$ runs through $R$-weight vectors of
  $V(d)$ whose weight is divisible by $r$.

  Now, the subalgebra of $U_+$-invariants of $\KK[\SL/\mu_r]$ is
  spanned by the elements $v \otimes w \in V(d)\otimes w$, where $v$
  is highest weight vector in $V(d)$. Let $T$ be the maximal torus in
  the (left) $\SL$-action. We have shown that the order $r$ of the
  generic stabilizer $r$ is the minimal integer such that
  $\ker\partial_+$ contains a $T$-weight vector of weight $r$ which is
  not $R$-invariant.

  We return now to the combinatorial data $X=X[C,\DD]$. Since
  $e=(1,-1)$, the grading given by $R$ corresponds to the ray
  $p_R=(1,1)$, and by the proof of Lemma~\ref{lm:hor-DD} the grading
  given by $T$ corresponds to the ray $p_T=v_0^- - v_1^+ = (1,0) -
  (r-1,r) = (-r+2, r)$.  By Theorem~\ref{lnd-hor}, the cone of the
  semigroup algebra $\ker\partial_+$ is dual to
  $\omega=\cone((-1,0),(r-1,r))$. The semigroup $\omega^\vee_M$ is
  spanned by $m_1=(0,1)$, $m_2=(-r,r-1)$, and $m_3=(-1,1)$ and we have
  $$\langle m_1,p_T \rangle=\langle m_2,p_T\rangle=r,
  \quad \langle m_3,p_T \rangle=2, \quad\mbox{and}\quad \langle
  m_3,p_R \rangle=0\,.$$ %
  Hence, the minimal weight such that the $\ker\partial_+$ contains a
  $T$-weight vector which is not $R$-invariant is $r$, and the lemma
  follows.
\end{proof}

\begin{lemma} \label{lm:rx} %
  Let $X=X[C,\DD]$ be as in Theorem~\ref{th:sl-emb}. Then $r=r_X$ is
  the order of the generic stabilizer and $X$ is an
  $(\SL/\mu_r)$-embedding.
\end{lemma}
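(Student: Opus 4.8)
The plan is to reduce the three lines of the table to the single homogeneous case already settled in Lemma~\ref{lm:sl-Zr}, using the fact that the order of the generic stabilizer of a quasi-homogeneous action is a birational invariant: it is the order of the stabilizer of a general point, which lies in the open orbit, and the open orbit is determined up to $\SL$-isomorphism by the $\SL$-field $\KK(X)$. So the heart of the argument is to check that all three families carry \emph{the same} birational $\SL$-action.

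First I would record, from Theorem~\ref{th:sl-emb} and its proof, that in all three lines $X$ is quasi-homogeneous with finite generic stabilizer, that $\DD$ comes from the first family of Theorem~\ref{th:hor} with colored data $v_0^-=(1,0)$, $v_1^+=(r-1,r)$, $e=(1,-1)$, $z_\infty=\infty$, and that $d=1$ since all colored vertices ($0$, $v_0^-$, $v_1^+$) lie in $N$. Substituting this into \eqref{eq:LND-} and \eqref{eq:LND+} I would write out $\partial_\pm$ explicitly on $\fract A=\KK(t)(M)$: the formulas involve only $v_0^\pm$, the exponents $s^\pm$, and $\varphi^m=(t-1)^{-v_1^+(m)}$. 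None of these data depend on the tail cone $\sigma$, on the polyhedron $\Delta_\infty$, or on whether $C=\AF^1$ or $C=\PP^1$ (in both cases $\KK(C)=\KK(t)$ and $z_\infty=\infty$). Hence the three varieties in the table carry literally the same $\slt$-triple $\{\delta,\partial_+,\partial_-\}$ acting on the common field $\KK(t)(M)$, so their birational $\SL$-actions coincide. Invoking birational invariance of the open orbit, an $\SL$-equivariant birational identification between two quasi-homogeneous normal $\SL$-varieties restricts to an $\SL$-isomorphism of dense $\SL$-stable open subsets, which must carry the unique open orbit of one onto that of the other; thus all three varieties have $\SL$-equivariantly isomorphic open orbits $\SL/H$ with the same $H$ up to conjugacy. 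Taking the reference case $C=\AF^1$, $\sigma=\{0\}$, Lemma~\ref{lm:sl-Zr} gives $X\simeq\SL/\mu_r$, so $H=\mu_r$ and $r=r_X$ is the order of the generic stabilizer in every line of the table. Since $X$ is normal and quasi-homogeneous with open orbit $\SL/\mu_r$, it is by definition an $(\SL/\mu_r)$-embedding.

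The step I expect to require the most care is the birational-invariance argument: one must verify that in each family the open orbit is genuinely the unique open $\SL$-orbit, and that the $\SL$-equivariant birational map coming from the equality of $\SL$-fields respects it, so that the stabilizer really transfers. The formal verification that the $\partial_\pm$ agree across the three families is routine given the explicit formulas, and the arithmetic showing that the generic stabilizer of $\SL/\mu_r$ has order exactly $r$ is already contained in the final paragraph of the proof of Lemma~\ref{lm:sl-Zr}, so nothing new is needed there.
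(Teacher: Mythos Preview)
Your argument is correct and takes a genuinely different route from the paper. The paper dispatches the non-homogeneous cases in one line by invoking an external result (\cite[Theorem~17]{AIPSV}) that produces a $\TT^2$-equivariant open embedding $\SL/\mu_r\hookrightarrow X$, and then appeals to the homogeneous case already handled in Lemma~\ref{lm:sl-Zr}. You instead stay entirely inside the paper's own setup: you observe that the explicit formulas \eqref{eq:LND-} and \eqref{eq:LND+} for $\partial_\pm$ on $\KK(t)(M)$ depend only on the colored vertices $v_0^-,v_1^+$ and on $e$, not on $\sigma$, $\Delta_\infty$, or whether $C=\AF^1$ or $\PP^1$, so all three rows of the table carry the same $\slt$-triple on the common fraction field, and hence have $\SL$-equivariantly isomorphic open orbits. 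What your approach buys is self-containment: no outside citation is needed, and the reasoning makes transparent \emph{why} the generic stabilizer is insensitive to the extra data. What the paper's approach buys is brevity, and it yields the slightly stronger statement that $\SL/\mu_r$ sits inside $X$ as an honest $\TT^2$-stable open subvariety, not merely as a birational model; in fact this open embedding is exactly what the paper needs immediately afterwards, in the paragraph preceding Definition~\ref{def:slope}, to compare the cones of $\KK[X]^{U_+}$ and $\KK[\SL/\mu_r]^{U_+}$.
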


\begin{proof}
  If $C=\AF^1$, $\sigma=\{0\}$, and $r\geq 1$, then the result follows
  from Lemma~\ref{lm:sl-Zr}. Let now $X=X[C,\DD]$ be as in
  Theorem~\ref{th:sl-emb} with $\sigma\neq \{0\}$. By
  \cite[Theorem~17]{AIPSV} there is a $\TT^2$-equivariant open
  embedding $\SL/\mu_r\hookrightarrow X$. Hence, the lemma follows
  from the homogeneous case.
\end{proof}

In the following we assume that $X$ is not equivariantly isomorphic to
a homogeneous space. Let $r$ be the order of the generic stabilizer of
$X$. The open embedding $\SL/\mu_r\hookrightarrow X$ induces an
inclusion of the algebras of $U_+$-invariants
$\KK[X]^{U_+}\hookrightarrow \KK[\SL/\mu_r]^{U_+}$. Both these
algebras are semigroup algebras. Moreover, the cones of these
semigroup algebras share a common ray in~$M_\QQ$. This ray will be
denoted by $\rho_{U_+}$.

Let $\omega\subseteq M_\QQ\simeq \QQ^2$ be a full dimensional cone and
let $\rho$ be one of its rays. It is well known that, up to 
automorphism of the lattice $M$, we can assume $\rho=\cone((1,0))$ and
$\omega=\cone((1,0),(b,c))$ with $1\leq b\leq c$ and $\gcd(b,c)=1$. We
define the slope of $\omega$ with respect to $\rho$ as
$\nicefrac{b}{c}\in \QQ\cap (0,1]$.

\begin{definition} \label{def:slope} %
  Let $X$ be a non-homogeneous quasi-homogeneous $\SL$-threefold.  We
  define the \emph{slope} $\hbar_X$ of $X$ as the slope of the cone of
  the ring of $U_+$-invariants with respect to the ray $\rho_{U_+}$.
\end{definition}

\begin{remark}
  This definition does not coincide with the height defined by Popov
  and used in the literature \cite{Pop73,Kra84,Gai08,BaHa08}. The
  height will be introduced below and denoted by the plain letter
  $h_X$. We will also show the relation between slope and height. The
  main motivation to use a different definition is that the results
  have simpler statements.
\end{remark}

Let $X=X[C,\DD]$ be as in Theorem~\ref{th:sl-emb} and assume that $X$
is not a homogeneous space. If $C=\AF^1$ and $\sigma=\cone((1,1))$,
then by Theorem~\ref{lnd-hor} the cone of the ring of $U_+$-invariants
$\KK[X]^{U_+}$ is given by $\cone((0,1)(-1,1))$ and so the slope of
$X$ is $\hbar_X=1$. Assume now that $C=\PP^1$. In this case, by
Theorem~\ref{lnd-hor}, the cone of the ring $\KK[X]^{U_+}$ is given by
$\cone((0,1),(-a,a+1))$ and the common ray of the cones of
$\KK[X]^{U_+}$ and $\KK[\SL/\mu_r]^{U_+}$ is spanned by the lattice
vector $(0,1)$. Hence, the slope of $X$ is
\begin{align} \label{eq:hbar}
\hbar_X=\frac{a}{a+1}\in \QQ\cap(0,1)\,.
\end{align}
Since the function defining $\hbar_X$ in terms of $a$ is one to one,
we have the following corollary.

\begin{corollary}
  Two non-homogeneous quasi-homogeneous $\SL$-threefolds $X$ and $X'$
  are equivariantly isomorphic if and only if $r_X=r_{X'}$ and
  $\hbar_X=\hbar_{X'}$.
\end{corollary}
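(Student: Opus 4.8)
The plan is to deduce both implications directly from the classification in Theorem~\ref{th:sl-emb} together with the explicit computation of the invariants carried out above, so that essentially no new work is required.

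First I would dispose of the forward implication, that an equivariant isomorphism forces equality of the invariants. This is immediate once one observes that both $r_X$ and $\hbar_X$ are intrinsic to $X$ as an $\SL$-variety: the number $r_X$ is by definition the order of the generic stabilizer of the $\SL$-action, and $\hbar_X$ is the slope of the cone of the ring of $U_+$-invariants $\KK[X]^{U_+}$ with respect to the distinguished ray $\rho_{U_+}$, a datum depending only on $X$ up to equivariant isomorphism. Hence an equivariant isomorphism $X\simeq X'$ carries this data to that of $X'$ and yields $r_X=r_{X'}$ and $\hbar_X=\hbar_{X'}$.

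For the converse I would argue from the table. By Theorem~\ref{th:sl-emb}, since $X$ and $X'$ are non-homogeneous, each is equivariantly isomorphic to some $X[C,\DD]$ whose combinatorial data appears in the second or third row of the table, the first row being excluded since there $\sigma=\{0\}$ gives a homogeneous space. The key step is that $\hbar_X$ separates these two families: by the computations recorded just before and in \eqref{eq:hbar}, one has $\hbar_X=1$ exactly when $C=\AF^1$ (second row), whereas $\hbar_X=\frac{a}{a+1}\in(0,1)$ when $C=\PP^1$ (third row). Thus the hypothesis $\hbar_X=\hbar_{X'}$ forces $X$ and $X'$ to lie in the same row.

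Finally I would finish case by case. If both lie in the second row, the entire combinatorial datum $(C,\sigma,\DD)$ depends only on $r$, so $r_X=r_{X'}$ gives identical data and hence $X\simeq X'$. If both lie in the third row, then $r_X=r_{X'}$ gives $r=r'$, and the injectivity of the map $a\mapsto \frac{a}{a+1}$ on $\QQ_{>0}$ turns $\hbar_X=\hbar_{X'}$ into $a=a'$; again the combinatorial data coincide and $X\simeq X'$. I do not expect any genuine obstacle here: the only points requiring care are checking that the invariants are truly intrinsic, which is what makes the forward direction work, and that the slope distinguishes the $\AF^1$ case from the $\PP^1$ case, and both of these have already been established above.
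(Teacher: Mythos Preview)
Your proposal is correct and follows essentially the same line as the paper, which records the corollary as an immediate consequence of the classification in Theorem~\ref{th:sl-emb} together with the injectivity of $a\mapsto a/(a+1)$ in \eqref{eq:hbar}. You simply spell out in more detail the case distinction between the $\AF^1$ and $\PP^1$ rows and the intrinsic nature of $r_X$ and $\hbar_X$, but no new idea is involved.
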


In the following corollary we give a criterion for a quasihomogeneous
$\SL$-threefold to be toric. This result is also given in \cite{Gai08}
and \cite{BaHa08} in terms of the height of $X$.

\begin{corollary} \label{cor:gaif} %
  Let $X$ be a quasi-homogeneous $\SL$-threefold. Then $X$ is a toric
  variety if and only if $X$ is non-homogeneous and
  $\hbar_X=\nicefrac{p}{p+1}$ for some $p\in \ZZ_{>0}$.
\end{corollary}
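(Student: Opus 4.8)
The plan is to funnel everything through the toricity criterion of Corollary~\ref{AH-toric}, applied to the explicit normal forms for $(C,\DD)$ supplied by Theorem~\ref{th:sl-emb}. Since toricity is an equivariant-isomorphism invariant, I may assume $\DD$ is in one of the three rows of the table. The decisive structural observation is a count of vertices: by the hypotheses of Theorem~\ref{th:hor} one has $v_0^-=(1,0)\notin\sigma$ and $v_1^+=(r-1,r)\notin\sigma$, so the polyhedra $\Delta_0=\conv(0,(1,0))+\sigma$ and $\Delta_1=\conv(0,(r-1,r))+\sigma$ each have two distinct vertices, whereas $\Delta_\infty=(a,a)+\sigma$ has a single vertex. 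The number of vertices of a $\sigma$-polyhedron is invariant under the lattice translations permitted by Corollary~\ref{cor:AH}, so $\Delta_0$ and $\Delta_1$ can never be shifted to $\sigma$, while $\Delta_\infty$ can be shifted to $\sigma$ exactly when its vertex can be moved to the origin by a lattice vector.

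First I would dispose of the two rows with $C=\AF^1$. In both (the homogeneous case $\sigma=\{0\}$ and the case $\sigma=\cone((1,1))$) the divisor $\DD$ is supported at the two points $0$ and $1$, each carrying a two-vertex polyhedron; by the remark above neither point can be removed via Corollary~\ref{cor:AH}, so $\DD$ cannot be made supported at a single point, and Corollary~\ref{AH-toric} shows that $X$ is not toric. In particular $\SL/\mu_r$ is not toric, so a toric $X$ cannot equal such a homogeneous space; for the remaining homogeneous case $\SL/H$ with $H$ non-commutative (not covered by Theorem~\ref{th:sl-emb}) one records separately that a smooth affine toric threefold with no non-constant units is $\AF^3$, on which no reductive group can act transitively, so $\SL/H$ is not toric either. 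This establishes that toric $\Rightarrow$ non-homogeneous. Note also that in the case $\sigma=\cone((1,1))$ one has $\hbar_X=1$, which is never of the form $\nicefrac{p}{p+1}$, consistent with $X$ being non-toric there.

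It remains to treat $C=\PP^1$, where $\DD$ is supported at $0,1,\infty$ and $\hbar_X=\nicefrac{a}{a+1}$ by \eqref{eq:hbar}. Here $X$ is toric iff, after applying Corollary~\ref{cor:AH}, the support can be reduced to at most two points. Since $\Delta_0,\Delta_1$ are unremovable, this forces the removal of $\Delta_\infty$: one needs lattice vectors $v_0,v_1,v_\infty\in N$ with $v_\infty=(a,a)$ (so that $\Delta_\infty$ is shifted to $\sigma$) and with $\sum_Z\langle m,v_Z\rangle\cdot Z$ principal for all $m\in\sigma^\vee_M$. On $\PP^1$ principality means total degree zero, i.e. $v_0+v_1+v_\infty=0$; granted $v_\infty=(a,a)\in N$ one may simply take $v_0=-(a,a)$ and $v_1=0$. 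Thus the reduction is possible precisely when $(a,a)\in N=\ZZ^2$, that is, when $a\in\ZZ_{>0}$. The step requiring the most care is exactly this bookkeeping: verifying that the only admissible way to drop below three points is to kill $\Delta_\infty$, and that the degree-zero constraint of Corollary~\ref{cor:AH} is then satisfiable over $N$ exactly when $a$ is integral.

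Finally I would translate $a\in\ZZ_{>0}$ into the desired statement. Writing $a=p$, formula \eqref{eq:hbar} gives $\hbar_X=\nicefrac{p}{p+1}$; conversely the map $a\mapsto\nicefrac{a}{a+1}$ is injective on $\QQ_{>0}$, so $\hbar_X=\nicefrac{p}{p+1}$ forces $a=p\in\ZZ_{>0}$. Combining the three rows, $X$ is toric if and only if $C=\PP^1$ and $a\in\ZZ_{>0}$, which is exactly the condition that $X$ be non-homogeneous with $\hbar_X=\nicefrac{p}{p+1}$ for some $p\in\ZZ_{>0}$, as claimed. The main obstacle throughout is the $\PP^1$ reduction argument; the homogeneous exclusions and the final arithmetic are routine once the vertex-count principle is in place.
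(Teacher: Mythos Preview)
Your proof is correct and follows the same route as the paper: reduce via Theorem~\ref{th:sl-emb} to the tabulated normal forms and then apply the toricity criterion of Corollary~\ref{AH-toric} together with the shifts of Corollary~\ref{cor:AH}, concluding that $X$ is toric precisely when $C=\PP^1$ and $a\in\ZZ_{>0}$, which translates into $\hbar_X=\nicefrac{p}{p+1}$. Your write-up is simply more explicit than the paper's: you spell out the vertex-count obstruction that prevents $\Delta_0$ and $\Delta_1$ from being shifted away, and you separately dispose of the homogeneous case $\SL/H$ with $H$ non-commutative finite, which the paper's short proof passes over.
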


\begin{proof}
  Let $X\simeq X[C,\DD]$ with $C=\AF^1$ or $C=\PP^1$ and $\DD$ as in
  Theorem~\ref{th:sl-emb}. By Corollaries~\ref{cor:AH} and
  \ref{AH-toric}, we obtain that $X$ is toric if and only if $C=\PP^1$
  and $a$ is an integer. Let now $\hbar=\nicefrac{p}{q}$ with
  $\gcd(p,q)=1$ and $p,q\geq 0$. By \eqref{eq:hbar} we have
  $a=\nicefrac{p}{q-p}$ and so the result follows.
\end{proof}

\begin{remark}
  In Corollary~\ref{cor:gaif}, the $\SL$-action is not compatible with
  the big torus, since otherwise the $\SL$-action would be special.
\end{remark}

\subsection{Relation between slope and height}
\label{sec:popov-height}

Let $X$ be a non-homogeneous quasi-homogeneous $\SL$-threefold.

\begin{definition}\label{def:popov-h}
  The \emph{height} $h_X$ of $X$ is defined as follows. If $r_X=1$
  then the height of $X$ is the same as the slope of $X$ i.e.,
  $h_X=\hbar_X$. If $r_X>1$ then there is a unique non-homogeneous
  quasi-homogeneous $\SL$-threefold $X'$ with $r_{X'}=1$ such that
  $X=X'/\mu_r$ (see \cite{Pop73} or \cite[III.4.9, Satz
  1]{Kra84}). The height of $X$ is defined as the slope of $X'$ i.e.,
  $h_X=\hbar_{X'}$.
\end{definition}

In this section we compute the height of $X$ in terms of the slope and
the order of the generic stabilizer. We also state
Corollary~\ref{cor:gaif} in terms of the height.

Assume that $r_X> 1$ and let $X'$ be as in
Definition~\ref{def:popov-h}. We let $M$, $N$, $\sigma$, $C$,
$\DD=\sum \Delta_z\cdot z$ and $M'$, $N'$, $\sigma'$, $C'$, $\DD'=\sum
\Delta'_z\cdot z$ be the combinatorial data of $X$ and $X'$,
respectively. By Definition~\ref{def:popov-h} we have
\begin{align*}
  &\Delta_0=\conv(0,(1,0))+\sigma,\quad
  \Delta_1=\conv(0,(r-1,r))+\sigma,\\
  &\Delta_0'=\conv(0,(1,0))+\sigma',\quad
  \Delta_1'=\conv(0,(0,1))+\sigma'\,.
\end{align*}

The morphism $\varphi: X'\rightarrow X$ is given by the quotient by
$\mu_r$ contained in the $\TT^2$ acting on $X'$.  Hence, the morphism
$\varphi$ is given by a morphisms $\varphi_*:N'\rightarrow N$ of
lattices. Hence, $C\simeq C'$. Furthermore, since the morphism
$\varphi_*$ sends $\Delta_0'$ into $\Delta_0$ and $\Delta_1'$ into
$\Delta_1$ we have that $\varphi_*$ is given by
$$(1,0)\mapsto (1,0),\quad\mbox{and}\quad (0,1)\mapsto (r-1,r)\,.$$

If $C=\AF^1$, then $C'\simeq \AF^1$ and so $h_X=\hbar_{X'}=1$. Assume
that $C=\PP^1$ and let $\Delta_\infty=(a,a)+\sigma$. In this case
$C'\simeq \PP^1$ and $\Delta_\infty'=\tfrac{1}{r}(a,a)+\sigma'$. Now
\eqref{eq:hbar} yields
\begin{align} \label{eq:h}
h_X=\hbar_{X'}=\frac{a}{a+r}\in \QQ\cap (0,1)\,.
\end{align}

Relations \eqref{eq:hbar} and \eqref{eq:h} imply the following
corollary.

\begin{corollary}
  Let $X$ be a non-homogeneous quasi-homogeneous $\SL$-threefold. Then
  $$h_X=\frac{\hbar_X}{r_X-(r_X-1)\hbar_X}\,.$$
\end{corollary}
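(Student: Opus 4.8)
The plan is to eliminate the geometric parameter $a$ between the two expressions already established in this subsection: formula \eqref{eq:hbar} for the slope and formula \eqref{eq:h} for the height. Combining them gives a purely numerical relation among $h_X$, $\hbar_X$ and $r_X$, which is exactly what the corollary asserts. Throughout I would work first in the generic situation $r_X>1$ and $C=\PP^1$, since this is the only case in which both invariants depend nontrivially on $a$; the degenerate cases are handled at the end.

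First I would invert \eqref{eq:hbar}. As $\hbar_X=\frac{a}{a+1}$ with $\hbar_X\in\QQ\cap(0,1)$, the relation $\hbar_X(a+1)=a$ can be solved for $a$, giving
\begin{equation*}
  a=\frac{\hbar_X}{1-\hbar_X}\,,
\end{equation*}
where the denominator is nonzero precisely because $\hbar_X<1$ in the $\PP^1$ case.

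Next I would substitute this value into \eqref{eq:h}, namely $h_X=\frac{a}{a+r_X}$. Clearing the nested fractions yields
\begin{equation*}
  h_X=\frac{\tfrac{\hbar_X}{1-\hbar_X}}{\tfrac{\hbar_X}{1-\hbar_X}+r_X}
      =\frac{\hbar_X}{\hbar_X+r_X(1-\hbar_X)}
      =\frac{\hbar_X}{r_X-(r_X-1)\hbar_X}\,,
\end{equation*}
which is the desired formula.

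Finally I would check the boundary cases, where $a$ plays no role and the two defining formulas do not both apply. If $r_X=1$, then $h_X=\hbar_X$ by Definition~\ref{def:popov-h}, and the right-hand side collapses to $\hbar_X/(1-0)=\hbar_X$. If $C=\AF^1$, then both $h_X=1$ and $\hbar_X=1$ by the computations preceding \eqref{eq:h}, and setting $\hbar_X=1$ in the formula gives $1/(r_X-(r_X-1))=1$. Thus the single uniform expression is valid in every case. The computation presents no genuine obstacle; the only point requiring a little care is confirming that these degenerate cases remain consistent with the one closed-form expression, which they do.
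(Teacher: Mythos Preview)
Your argument is correct and is exactly the approach the paper intends: it simply states that the corollary follows from relations \eqref{eq:hbar} and \eqref{eq:h}, and you have carried out the elimination of $a$ and verified the boundary cases in detail.
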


Finally, a direct computation shows that in terms of the height
Corollary~\ref{cor:gaif} takes the form as in \cite{Gai08,BaHa08}.

\begin{corollary} \label{cor:gaif-2} %
  Let $X$ be a non-homogeneous quasi-homogeneous $\SL$-threefold and
  let $h_X=\nicefrac{p}{q}$, where $\gcd(p,q)=1$ and $p,q>0$. Then $X$
  is a toric variety if and only if $q-p$ divides $r$.
\end{corollary}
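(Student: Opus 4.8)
The plan is to deduce the criterion from the toricity characterization in terms of the slope, Corollary~\ref{cor:gaif}, by converting its condition on $\hbar_X$ into a divisibility condition on $h_X$ and $r_X$ through the explicit formulas \eqref{eq:hbar} and \eqref{eq:h}. I would begin by disposing of the case $C=\AF^1$. There $h_X=1$, so $p=q=1$ and $q-p=0$; since $\hbar_X=1$ is not of the form $\nicefrac{p'}{p'+1}$, Corollary~\ref{cor:gaif} shows $X$ is not toric, while $q-p=0$ fails to divide $r>0$. Hence the asserted equivalence holds in this case, and I may assume $C=\PP^1$ in what follows, so that $h_X<1$ and therefore $q>p$.

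In the $\PP^1$ case, combining Corollary~\ref{cor:gaif} with \eqref{eq:hbar} shows that $X$ is toric precisely when $\nicefrac{a}{a+1}=\nicefrac{p'}{p'+1}$ for some $p'\in\ZZ_{>0}$, that is, precisely when the parameter $a$ is a positive integer. I would then solve for $a$ using \eqref{eq:h}: from $h_X=\nicefrac{a}{a+r}=\nicefrac{p}{q}$ one gets $aq=p(a+r)$, hence $a(q-p)=pr$, and therefore
$$a=\frac{pr}{q-p}\,.$$
Thus $X$ is toric if and only if $pr/(q-p)\in\ZZ_{>0}$, i.e. if and only if $(q-p)\mid pr$.

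Finally I would invoke the coprimality hypothesis. Since $\gcd(p,q)=1$ we also have $\gcd(p,q-p)=\gcd(p,q)=1$, so $q-p$ is coprime to $p$; consequently $(q-p)\mid pr$ if and only if $(q-p)\mid r$. Combining the equivalences shows that $X$ is toric if and only if $q-p$ divides $r$, as claimed. The argument is essentially a substitution followed by one elementary number-theoretic step, so I anticipate no genuine obstacle; the only points demanding care are this coprimality reduction and the degenerate boundary case $h_X=1$ (the $\AF^1$ case), which I have isolated at the outset.
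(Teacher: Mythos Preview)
Your proof is correct and follows the same route as the paper, which merely states that ``a direct computation'' converts Corollary~\ref{cor:gaif} into this form; you have simply written out that computation in full, including the careful handling of the boundary case $h_X=1$.
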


\subsection{Generically transitive $\SL\times \TT^s$-action}

Consider now the reductive group $G=\SL\times \TT^s$ for some $s\in
\ZZ_{\geq 0}$. Any action of this group on a normal affine variety is
compatible with the action of the torus $\TT=T\times \TT^s$, where
$T\subseteq \SL$ is a maximal torus. The results of
Section~\ref{sec:T-var-hor} may be regarded as a classification of
generically transitive $G$-actions under the assumption that the
complexity of the corresponding $\TT$-action does not exceed one. In
Section~\ref{sec:popov} we deal with the case $s=1$ and because of
Proposition~\ref{th-popov} this yields a classification of generically
transitive $G$-actions with $s=0$. The following example shows that
our techniques does not allow to describe all generically transitive
$G$-actions with $s=1$.

\begin{example}
  Let $G=\SL\times \KK^*$ and $X=V_3=\langle x^3,x^2y,xy^2,y^3\rangle$
  be a simple $\SL$-module of binary forms of degree 3, where $\KK^*$
  acts by scalar multiplication. The module $V_3$ contains a
  1-parameter family of general $\SL$-orbits, namely
  $\SL\cdot(\alpha(x^3+y^3))$, $\alpha\in \KK\setminus\{0\}$. Thus,
  $G$ acts on $V_3$ with an open orbit isomorphic to $G/\mu_2$. The
  torus $\TT=T\times \KK^*$ acts on $X$ with complexity two. Assume
  that $\TT$ may be extended by a torus $R$ commuting with $G$. Then
  $R$ commutes with $\KK^*$ and its action descends to the
  projectivization $\PP(V_3)$. Then $R$ maps to a subtorus of
  $\PGL_4$. Since $V_3$ is simple, by Schur's Lemma there are no
  non-identity elements in $\PGL_4$ commuting with the image of $\SL$.
\end{example}

\section*{Appendix: The commutator formulas}
\label{appendix}

In this appendix we prove the commutator formulas~\eqref{commutator-1}
and \eqref{commutator-2} used in Section~\ref{sec:T-var-hor}. The
computations are routine, but cumbersome, so we put them in an
appendix instead of the main body of the paper for easy reading.

We keep the notation as in Section~\ref{sec:T-var-hor}. The main idea
is that from \eqref{eq:LND-} and \eqref{eq:LND+} we can obtain
formulas for $\partial_\pm(\chi^m)$ and for $\partial_\pm(t)$ by
applying the Leibniz rule. Then we use this formulas to compute the
commutator $\delta=[\partial_+,\partial_-]$. 

A simple evaluation of \eqref{eq:LND-} and \eqref{eq:LND+} yields
$$\partial_-(t)=d^-\cdot\chi^{-e}\cdot(q')^{-1}\cdot q^{1+s^-},\quad 
\partial_-(\chi^m)=d^-\cdot v_{z_0^-}^-(m)\cdot
\chi^{m-e}\cdot q^{s^-}\quad\mbox{and}, $$
$$\partial_+(t)=d^+\cdot \varphi^e\cdot \chi^{e}\cdot t^{1+s^+}\,.$$
Evaluating now \eqref{eq:LND+} for $r=0$ we obtain
$$(\varphi^m)'\cdot\chi^m\cdot\partial_+(t)+\varphi^m\cdot \partial_+(\chi^m)=
d^+\cdot v_0^+(m)\cdot \varphi^{m+e}\cdot \chi^{m+e}\cdot t^{s^+}\,.$$
And using the definition of $\alpha_m$ we obtain
$$\partial_+(\chi^m)=
d^+\cdot(v_0^+(m)-\alpha_m)\cdot \varphi^{e}\cdot \chi^{m+e}\cdot
t^{s^+}\,.$$ 

\medskip\noindent\textbf{Formula \eqref{commutator-1}.}
We compute first $\partial_+\partial_-(t)$ and
$\partial_-\partial_+(t)$.

\begin{align*}
  \partial_+\partial_-(t)&=\partial_+
  \left(d^-\cdot\chi^{-e}\cdot(q')^{-1}\cdot q^{1+s^-}\right)\\
  &=d^+d^-\varphi^e
  t^{s^+}q^{s^-}\left(\left(\alpha_e-v_0^+(e)\right)\cdot\frac{q}{q'}-
    \frac{q''q t}{(q')^2}+(1+s^-)\cdot t \right), \quad\mbox{and} \\
  \partial_-\partial_+(t)&=\partial_-
  \left(d^+\cdot\chi^{e}\cdot t^{1+s^+}\right)\\
  &=d^+d^-\varphi^e t^{s^+}q^{s^-}\left(v_{z_0^-}^-(e)\cdot
    t+\left(1+s^++\alpha_e\right)\cdot\frac{q}{q'} \right)\,.
\end{align*}

Hence, the commutator is given by
$$\delta(t)=d^+d^-\varphi^e
t^{s^+}q^{s^-}\left(\left(1+s^--v_{z_0^-}^-(e)\right)\cdot t
  -\left(1+s^++v_0^+(e)\right)\cdot\frac{q}{q'}-
  \frac{q''q t}{(q')^2} \right)\, $$ %
and \eqref{commutator-1} follows since
$s^+=-\nicefrac{1}{d^+}-v_{0}^+(e)$ and
$s^-=-\nicefrac{1}{d^-}+v_{z_0^-}^-(e)$.

\medskip\noindent\textbf{Formula \eqref{commutator-2}.} In this case
we have $z_0^\pm=0$, $z_\infty^\pm=\infty$ and $d^\pm=d$. Hence

$$\partial_-(t)=d\cdot\chi^{-e}\cdot t^{1+s^-},\quad 
\partial_-(\chi^m)=d\cdot v_{0}^-(m)\cdot \chi^{m-e}\cdot
t^{s^-}\quad \partial_+(t)=d \cdot \varphi^e\cdot \chi^{e}\cdot
t^{1+s^+}\quad\mbox{and}\,.$$
$$\partial_+(\chi^m)=
d\cdot(v_0^+(m)-\alpha_m)\cdot \varphi^{e}\cdot \chi^{m+e}\cdot
t^{s^+}\,.$$ 

This yields
\begin{align*}
&\partial_+(\chi^mt^r)=d\varphi^e\cdot
(v_0^+(m)+r-\alpha_m)\cdot\chi^{m+e}t^{r+s^+}, \quad\mbox{and}\\
&\partial_-(\chi^mt^r)=d\cdot
(v_0^-(m)+r)\cdot\chi^{m-e}t^{r+s^-}
\end{align*}
Recall that $s^+=-\nicefrac{1}{d}-v_0^+(e)$,
$s^-=-\nicefrac{1}{d}+v_0^-(e)$, $v_0=v_0^--v_0^+$ and
$\nu=v_0(e)-\nicefrac{1}{d}=s^++s^-+\nicefrac{1}{d}$. Now a direct
computation yields
\begin{align*}
\partial_+\partial_-(\chi^mt^r)=d^2\chi^e&\cdot\left(v^-_0(m)+r\right)
\cdot\left(v^+_0(m)+\nu+r-\alpha_m+\alpha_e\right)\cdot\chi^m
t^{\nu-\nicefrac{1}{d}}, \quad\mbox{and}\\
\partial_-\partial_+(\chi^mt^r)=d^2\chi^e&\cdot\left(-t\alpha_m'+
  \alpha_e(v^+_0(m)+r-\alpha_m)+\right.\\
&\left.+(v^+_0(m)+r-\alpha_m)(v^-_0(m)+\nu+r) \right)\cdot\chi^m
t^{\nu-\nicefrac{1}{d}}\,.
\end{align*}

Formula \eqref{commutator-2} follows by computing
$\delta(\chi^mt^r)=\partial_+\partial_-(\chi^mt^r)-
\partial_-\partial_+(\chi^mt^r)$.

\bibliographystyle{alpha} \bibliography{math}

\newcommand{\etalchar}[1]{$^{#1}$}
\begin{thebibliography}{KKMS73}

\bibitem[AH06]{AlHa06}
Klaus Altmann and J{\"u}rgen Hausen.
\newblock Polyhedral divisors and algebraic torus actions.
\newblock {\em Math. Ann.}, 334(3):557--607, 2006.

\bibitem[AHS08]{AHS08}
Klaus Altmann, J{\"u}rgen Hausen, and Hendrik S{\"u}ss.
\newblock Gluing affine torus actions via divisorial fans.
\newblock {\em Transform. Groups}, 13(2):215--242, 2008.

\bibitem[AIP{\etalchar{+}}11]{AIPSV}
Klaus Altmann, Nathan{ O}wen Ilten, Lars Petersen, Hendrik S{\"u}ss, and Robert
  Vollmert.
\newblock The geometry of {T}-varieties, 2011.
\newblock arXiv:1102.5760v1 [math.AG], 42p. (To appear in Contributions to
  Algebraic Geometry, IMPANGA Lecture Notes).

\bibitem[Arz97]{Arz97}
Ivan~V. Arzhantsev.
\newblock On {${\rm SL}_2$}-actions of complexity one.
\newblock {\em Izv. Ross. Akad. Nauk Ser. Mat.}, 61(4):3--18, 1997.

\bibitem[BH03]{BeHa03}
Florian Berchtold and J{\"u}rgen Hausen.
\newblock Demushkin's theorem in codimension one.
\newblock {\em Math. Z.}, 244(4):697--703, 2003.

\bibitem[BH08]{BaHa08}
Victor Batyrev and Fatima Haddad.
\newblock On the geometry of {${\rm SL}(2)$}-equivariant flips.
\newblock {\em Mosc. Math. J.}, 8(4):621--646, 846, 2008.

\bibitem[Dem70]{Dem70}
Michel Demazure.
\newblock Sous-groupes alg{\'e}briques de rang maximum du groupe de {C}remona.
\newblock {\em Ann. Sci. {\'E}cole Norm. Sup. (4)}, 3:507--588, 1970.

\bibitem[Dem88]{Dem88}
Michel Demazure.
\newblock Anneaux gradu\'es normaux.
\newblock In {\em Introduction \`a la th\'eorie des singularit\'es, {II}},
  volume~37 of {\em Travaux en Cours}, pages 35--68. Hermann, Paris, 1988.

\bibitem[Dol03]{Dol03}
Igor Dolgachev.
\newblock {\em Lectures on invariant theory}, volume 296 of {\em London
  Mathematical Society Lecture Note Series}.
\newblock Cambridge University Press, Cambridge, 2003.

\bibitem[Fre06]{Fre06}
Gene Freudenburg.
\newblock {\em {Algebraic theory of locally nilpotent derivations.}}
\newblock {Encyclopaedia of Mathematical Sciences 136. Invariant Theory and
  Algebraic Transformation Groups 7. Berlin: Springer}, 2006.

\bibitem[Ful93]{Fu93}
William Fulton.
\newblock {\em Introduction to toric varieties}, volume 131 of {\em Annals of
  Mathematics Studies}.
\newblock Princeton University Press, Princeton, NJ, 1993.
\newblock The William H. Roever Lectures in Geometry.

\bibitem[FZ05]{FlZa05b}
Hubert Flenner and Mikhail Zaidenberg.
\newblock Locally nilpotent derivations on affine surfaces with a
  {$\mathbb{C}^*$}-action.
\newblock {\em Osaka J. Math.}, 42(4):931--974, 2005.

\bibitem[Ga{\u\i}08]{Gai08}
Sergey~A. Ga{\u\i}fullin.
\newblock Affine toric {${\rm SL}(2)$}-embeddings.
\newblock {\em Mat. Sb.}, 199(3):3--24, 2008.

\bibitem[KKMS73]{KKMS73}
G.~Kempf, F.~Knudsen, D.~Mumford, and B.~Saint$\mbox{-Donat}$.
\newblock {\em Toroidal embeddings. {I}}.
\newblock Lecture Notes in Mathematics, Vol. 339. Springer-Verlag, Berlin,
  1973.

\bibitem[Kra84]{Kra84}
Hanspeter Kraft.
\newblock {\em Geometrische {M}ethoden in der {I}nvariantentheorie}.
\newblock Aspects of Mathematics, D1. Friedr. Vieweg \& Sohn, Braunschweig,
  1984.

\bibitem[Lie10a]{Lie10}
Alvaro Liendo.
\newblock Affine $\mathbb{T}$-varieties of complexity one and locally nilpotent
  derivations.
\newblock {\em Transform. Groups}, 15(2):389--425, 2010.

\bibitem[Lie10b]{Lie10b}
Alvaro Liendo.
\newblock $\mathbb{G}_{\mathrm{a}}$-actions of fiber type on affine
  $\mathbb{T}$-varieties.
\newblock {\em J. Algebra}, 324:3653--3665, 2010.

\bibitem[Lun72]{Lun72}
Domingo Luna.
\newblock Sur les orbites ferm\'ees des groupes alg\'ebriques r\'eductifs.
\newblock {\em Invent. Math.}, 16:1--5, 1972.

\bibitem[Lun75]{Lun75}
Domingo Luna.
\newblock Adh\'erences d'orbite et invariants.
\newblock {\em Invent. Math.}, 29(3):231--238, 1975.

\bibitem[Pop73]{Pop73}
Vladimir~L. Popov.
\newblock Quasihomogeneous affine algebraic varieties of the group {${\rm
  SL}(2)$}.
\newblock {\em Izv. Akad. Nauk SSSR Ser. Mat}, 37:792--832, 1973.

\bibitem[Pop86]{Pop86}
Vladimir~L. Popov.
\newblock Contractions of actions of reductive algebraic groups.
\newblock {\em Mat. Sb. (N.S.)}, 130(172)(3):310--334, 1986.

\bibitem[Tim97]{Tim97}
Dmitri~A. Timashev.
\newblock Classification of {$G$}-manifolds of complexity {$1$}.
\newblock {\em Izv. Ross. Akad. Nauk Ser. Mat.}, 61(2):127--162, 1997.

\bibitem[Tim08]{Tim08}
Dmitri~A. Timashev.
\newblock Torus actions of complexity one.
\newblock In {\em Toric topology}, volume 460 of {\em Contemp. Math.}, pages
  349--364. Amer. Math. Soc., Providence, RI, 2008.

\end{thebibliography}
\end{document}